\DeclareMathAlphabet{\mathcal}{OMS}{cmsy}{m}{n} 
\theoremstyle{plain}
\newtheorem{theorem}{Theorem}[section]    
\newtheorem{conjecture}[theorem]{Conjecture}
\newtheorem{proposition}[theorem]{Proposition}
\newtheorem{corollary}[theorem]{Corollary}
\newtheorem{lemma}[theorem]{Lemma}
\theoremstyle{definition}
\newtheorem{remark}[theorem]{Remark}
\newtheorem{example}[theorem]{Example}
\newtheorem{question}[theorem]{Question}
\newcommand{\Z}{\mathbb Z}
\newcommand{\Q}{\mathbb Q}
\newcommand{\bdry}{\partial}
\newcommand{\st}[1]{\operatorname{st}}
\newcommand{\nbhd}{\mathcal{N}}
\renewcommand{\hat}[1]{\widehat{{#1}}}
\newcommand{\wind}{{\mathrm{wind}}}
\newcommand{\wrap}{{\mathrm{wrap}}}
\renewcommand{\(}{\textup{(}}
\renewcommand{\)}{\textup{)}}
\theoremstyle{plain}
\newtheorem*{claim*}{Claim}
\newtheorem*{thm:slicegenera_seifertgenera}{Theorem~\ref{thm:slicegenera_seifertgenera}}
\newtheorem*{thm:coherent}{Theorem~\ref{thm:coherent}}
\newtheorem*{thm:converse}{Theorem~\ref{converse}}
\newtheorem*{thm:Murasugi-sum}{Theorem~\ref{thm:Murasugi-sum}}
\newtheorem*{thm:positive_L-space_knot}{Theorem~\ref{thm:postwistposLspace}}
\newtheorem*{cor:knots=>surgeries}{Corollary~\ref{knots=>surgeries}}
\newtheorem*{thm:limit}{Theorem~\ref{thm:limit}}
\newtheorem*{thm:postwistquasipositive}{Theorem~\ref{thm:postwistquasipositive}}  
\newtheorem*{theorem*}{Theorem}
\newtheorem*{thm:positivetwistLspaceknots}{Theorem~\ref{thm:positivetwistLspaceknots}}
\newtheorem*{thm:slice_genus_bound}{Theorem~\ref{thm:slice_genus_bound}}
\newtheorem*{thm:values}{Theorem~\ref{thm:values}}
\newtheorem*{thm:braidaxis}{Theorem~\ref{thm:braidaxis}}
\begin{document}
\baselineskip 14pt

\title{Seifert vs slice genera of knots in twist families and a characterization of braid axes}

\author{Kenneth L. Baker and Kimihiko Motegi}

\address{Department of Mathematics, University of Miami, 
Coral Gables, FL 33146, USA}
\email{k.baker@math.miami.edu}
\address{Department of Mathematics, Nihon University, 
3-25-40 Sakurajosui, Setagaya-ku, 
Tokyo 156--8550, Japan}
\email{motegi@math.chs.nihon-u.ac.jp}

\thanks{The first named author was partially supported by a grant from the Simons Foundation (\#209184 to Kenneth L.\ Baker).  
The second named author was partially supported by JSPS KAKENHI Grant Number JP26400099 and Joint Research Grant of Institute of Natural Sciences at Nihon University for 2017.  
}

\dedicatory{}

\begin{abstract}
Twisting a knot $K$ in $S^3$  along a disjoint unknot $c$ produces a twist family of knots $\{K_n\}$ indexed by the integers.
Comparing the behaviors of the Seifert genus $g(K_n)$ and the slice genus $g_4(K_n)$ under twistings, 
we prove that if $g(K_n) - g_4(K_n) < C$ for some constant $C$ for infinitely many integers $n > 0$
or $g(K_n) / g_4(K_n) \to 1$ as $n \to \infty$, 
then either the winding number of $K$ about $c$ is zero or the winding number equals the wrapping number. 
As a key application, 
if $\{K_n\}$ or the mirror twist family $\{\overline{K_n}\}$ 
contains infinitely many tight fibered knots, then the latter must occur.
We further develop this to show that $c$ is a braid axis of $K$ if and only if both $\{K_n\}$ and $\{\overline{K_n}\}$ each contain infinitely many tight fibered knots.
We also give a necessary and sufficient condition for $\{ K_n \}$ to contain infinitely many L-space knots, 
and show (modulo a conjecture) that satellite L-space knots are braided satellites. 
\end{abstract}

\maketitle

\date{\today}

{
\renewcommand{\thefootnote}{}
\footnotetext{2010 \textit{Mathematics Subject Classification.} 
	Primary 57M25, 57M27, Secondary 57R17, 57R58}
\footnotetext{ \textit{Key words and phrases.} 
	Seifert genus, slice genus, twisting, strongly quasipositive knot, tight fibered knot, L-space knot, satellite knot, braid }
}

\tableofcontents

\section{Introduction}
\label{section:Introduction}

Let $K$ be a knot in the $3$--sphere $S^3$.  
The {\em (Seifert) genus} of $K$, $g(K)$, is the minimal genus of a connected, orientable surface that is embedded in $S^3$ and bounded by $K$.  
Regarding $S^3$ as the boundary of the $4$--ball $B^4$, 
the {\em (smooth) slice genus} of $K$, $g_4(K)$ is the minimal genus of a connected, orientable surface that is smoothly, 
properly embedded in $B^4$ and bounded by $K$.  
Observe that $g_4(K) \leq g(K)$.

Now take an unknot $c$ in $S^3$ disjoint from $K$.  
The {\em winding number} of $K$ about $c$, $\omega= \wind_c(K)=|lk(K,c)|$, 
is the absolute value of the algebraic intersection number of $K$ and a disk bounded by $c$, (i.e.\ the absolute value of the linking number).  
The {\em wrapping number} of $K$ about $c$, $\eta= \wrap_c(K)$, 
is the minimal geometric intersection number of $K$ and a disk bounded by $c$.  
Observe that $\wind_c(K) \leq \wrap_c(K)$.  

Performing $(-1/n)$--surgery on $c$ causes $K$ to twist $n$ times along a disk bounded by $c$, 
producing a {\em twist family} of knots $\{K_n\}_{n \in \Z}$.
When $\wind_c(K)=\wrap_c(K)$ there is a disk bounded by $c$ that $K$ intersects in a single direction.  
In this case we say that $c$ links $K$ {\em coherently} and that $\{K_n\}$ is a {\em coherent twist family}.

 When $\wrap_c(K) \leq 1$, then $c$ is either a meridian of $K$ or split from $K$ so that $K=K_n$ for all $n$.   
However, when $\wrap_c(K) >1$, then $\{ K_n \}$ contains infinitely many distinct knots, 
and $g(K_n)=g_4(K_n)=0$ occurs for at most two integers $n$; 
see \cite{KMS, Mathieu}.

\subsection{Comparison of Seifert genera and slice genera of knots under twisting} 

Since $g(K) \geq g_4(K) \geq 0$ for any knot $K$,  
the discrepancy between the Seifert and slice genera of knots in twist families may be measured in terms of the behavior of their ratio $g(K_n)/g_4(K_n)$. 
(Regard $N/0$ as $\infty$ for numbers $N>0$.)

\begin{theorem}\
\label{thm:values}
\begin{enumerate}
\item
For any twist family $\{K_n\}$ with $\omega > 0$, 
$\displaystyle \lim_{n \to \infty} \frac{g( K_n )}{g_4(K_n )} = r$ where $1 \le r \in \mathbb{Q} \cup \{\infty\}$.
\item
For any $1 \le r \in \mathbb{Q} \cup \{\infty\}$, 
there exists a twist family $\{ K_n \}$ with $\omega>0$ which satisfies 
$\displaystyle \lim_{n \to \infty} \frac{g( K_n )}{g_4(K_n )} = r$. 
\end{enumerate}
\end{theorem}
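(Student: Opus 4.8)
The plan is to reduce the statement to the linear growth rates of the two genera. Set $\eta=\wrap_c(K)$. If $\eta\le 1$ then, as observed above, $c$ is a meridian of $K$ and $K_n=K$ for every $n$, so the ratio is the constant $g(K)/g_4(K)\in\mathbb{Q}_{\ge1}\cup\{\infty\}$ and there is nothing to prove; hence assume $\eta\ge2$. First I would show that the Seifert genus is eventually affine in $n$: regarding $K_n$ as a knot in the $-1/n$--Dehn filling of the exterior of $K\cup c$, the quantity $2g(K_n)-1$ is the Thurston norm of the class carried by a Seifert surface in the filled manifold, and the behaviour of the Thurston norm under $-1/n$--filling forces $g(K_n)=s_g\,n+\beta$ for all large $n$, with slope $s_g\in\mathbb{Q}_{\ge0}$. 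Since $\omega>0$ and $\eta\ge2$, the degree of the Alexander polynomial (a lower bound for $2g$) is unbounded in $n$, so in fact $s_g>0$.

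The harder input is the analogous statement for the slice genus: that $s_4:=\lim_{n\to\infty}g_4(K_n)/n$ exists, lies in $\mathbb{Q}_{\ge0}$, and satisfies $s_4\le s_g$. The last inequality is immediate from $g_4\le g$. For existence and rationality I would argue from two sides. A single--full--twist cobordism in $B^4$ relating $K_n$ to $K_{n+1}$ bounds the increments $|g_4(K_{n+1})-g_4(K_n)|$ by a constant, giving $g_4(K_n)=O(n)$; the lower bound on the rate comes from a concordance invariant sensitive to the \emph{coherent} part of the twisting, for which I would use $\tau$ (or $s$) together with a Bennequin--type surface realizing $g_4(K_n)$ for all large $n$, again affine in $n$ with rational slope. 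This step is the one I expect to be the main obstacle: the Seifert side is rigidly controlled by the Thurston norm, whereas the slice genus has no comparable structure, so one must produce matching linear upper and lower bounds whose common rate is governed only by the coherent strands through the twisting disk. Granting the two rates, (1) follows: if $s_4>0$ then $g(K_n)/g_4(K_n)\to s_g/s_4=:r\in\mathbb{Q}$ with $r\ge1$ because $g_4\le g$; while if $s_4=0$ then $g_4(K_n)=o(n)$ and $g(K_n)\sim s_g n\to\infty$, so the ratio tends to $\infty$.

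\textbf{Part (2).} For the realization I would connect-sum twist families with computable rates, using that both $g$ and $g_4$ are additive under connected sum. Two kinds of blocks suffice. A \emph{coherent} block, in which $c$ is a braid axis of $\omega\ge2$ coherently oriented strands, makes positive twisting produce strongly quasipositive fibered knots with $g=g_4$ and hence equal rates $s_g=s_4=\tfrac12\omega(\omega-1)$, giving limiting ratio $1$. An \emph{incoherent} block of winding number $1$ is arranged so that twisting drives the Seifert genus to infinity ($s_g>0$) while keeping the slice genus bounded ($s_4=0$), giving limiting ratio $\infty$. Taking the connected sum of one coherent block with $\omega$ strands and a winding-number-$1$ block of Seifert rate $\lambda$ yields $s_4=\tfrac12\omega(\omega-1)$ and $s_g=\tfrac12\omega(\omega-1)+\lambda$, so that $r=1+2\lambda/\bigl(\omega(\omega-1)\bigr)$.

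Letting $\lambda$ range over the positive rationals, by increasing the combinatorial complexity of the incoherent block, then realizes every rational $r>1$, while the two pure blocks realize $r=1$ and $r=\infty$; in every case $\omega>0$. The remaining work for (2) is precisely to exhibit the winding-number-$1$ block as advertised, namely a twist family with $\omega=1$ for which $g(K_n)\to\infty$, $g_4(K_n)$ stays bounded, and the Seifert rate $\lambda$ can be prescribed. This again isolates the same phenomenon underlying Part (1): incoherent strands contribute to the Seifert genus but, when the net winding is controlled, can be capped off in $B^4$ without cost to the slice genus.
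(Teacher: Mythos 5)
Your Part (1) follows the same skeleton as the paper's argument: the Seifert genus is eventually affine in $n$ by the Thurston norm under Dehn filling (this is Theorem~\ref{thm:twistgenera}, quoted from Baker--Taylor), and the slice genus is to be pinched between linear bounds coming from a twisting cobordism above and a concordance invariant below (the paper's Propositions~\ref{prop:upperslicebound} and \ref{prop:newlowerslicebound}). But the step you defer as ``the main obstacle'' is precisely the mathematical content of the theorem, and without it the existence of $\lim g_4(K_n)/n$ --- hence of the ratio limit --- is simply not established. What the paper proves, and what your sketch does not, is that the two bounds have the \emph{same} slope $\omega(\omega-1)$: the upper bound comes from a cobordism from $K_n$ to $K_m \sqcup T_{\eta,\,(n-m)\eta}$ and then to $T_{\omega,\,(n-m)\omega}$, so each full twist costs $\omega(\omega-1)/2$ while the incoherence penalty $(\eta-\omega)/2$ is paid once rather than per twist; the lower bound uses the Beliakova--Wehrli extension of the Rasmussen invariant to links, whose value $s(T_{\omega,\,(n-m)\omega})=(\omega-1)\bigl((n-m)\omega-1\bigr)$ on coherent torus links produces exactly the same slope. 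It is this coincidence of rates --- not merely ``$O(n)$ above and something affine below'' --- that yields $2g_4(K_n)=n\omega(\omega-1)+O(1)$ and hence $r=x([D])/(\omega-1)\in\Q$ for $\omega>1$ and $r=\infty$ for $\omega=1<\eta$ (Theorem~\ref{limit_omega>=1}). A smaller point: your claim that the Alexander polynomial degree of $K_n$ is unbounded when $\omega=1$, $\eta\geq 2$ is unjustified (the Alexander polynomial does not detect genus); the positivity of the Seifert rate should instead come from $x([D])>0$, which holds because a genus-zero norm-minimizing representative of $[D]$ with boundary $\mu_K+\lambda_c$ would force $c$ to be a meridian of $K$.

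Part (2) contains a genuine gap that is not merely deferred work: the class of twist families is not closed under connected sum, so your block construction does not produce an object to which the statement applies. If $\{K_n\}$ is obtained by twisting along $c$ and $\{K'_n\}$ by twisting along $c'$, there is in general no single \emph{unknot} $c''$ for which $(-1/n)$--surgery carries $K_0\,\#\,K'_0$ to $K_n\,\#\,K'_n$: a disk meeting strands of both blocks introduces, upon twisting, linking between strands of different blocks, while a disk meeting only one block leaves the other untouched. (You also invoke additivity of $g_4$ under connected sum, which is an open problem in general --- only subadditivity is known --- though this could be patched by choosing $\tau$--sharp summands.) The paper's construction circumvents exactly this obstruction by using cabling instead of connected sum: cabling genuinely commutes with twisting, since $(K^{p,q})_n=(K_n)^{p,\,q+p\omega^2 n}$, so the $(p,q)$--cable of a twist family is again a twist family along the same circle. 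Lemma~\ref{lem:boundedslicelimit} shows that cabling a bounded-slice-genus family with winding number $\omega$ yields limit ratio $1+\frac{p}{p-1}\lim_{n\to\infty}\frac{2g(K_n)}{p\omega^2n-1}$, and Theorem~\ref{thm:proportion} feeds into it an explicitly constructed winding-number-one family of ribbon knots with $g_4(K^m_n)=0$ and $g(K^m_n)=mn+1$ (built from the link $L10n73$, with the genus computed from the Alexander polynomial via the Torres conditions and realized by a ribbon-number bound), giving the value $1+\frac{2m}{p-1}$ and hence every rational $r>1$; the cases $r=1$ and $r=\infty$ come from Theorem~\ref{thm:coherent} and Theorem~\ref{limit_omega>=1}(1). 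Note also that this explicit ribbon family is exactly the ``incoherent block with prescribed Seifert rate'' whose existence you leave open, so that part of your plan needs the paper's construction in any case.
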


We will establish this theorem in Section~\ref{examples}. 
The proof of the first assertion requires an understanding of behaviors of both Seifert genera and slice genera under twisting. 
For the second assertion we will give explicit examples with the desired property.

In the case that $\omega=0$,  
Proposition~\ref{prop:w=0} demonstrates that Theorem~\ref{thm:values}(2) still holds. 
Considering Theorem~\ref{thm:values}(1) when $\omega=0$, 
Question~\ref{question_w=0} asks if the limit $\displaystyle \lim_{n \to \infty} \frac{g( K_n )}{g_4(K_n )}$ is always defined.

\medskip
When the Seifert genus and the slice genus are asymptotically same 
(or alternatively when their difference is bounded infinitely often) we find a strong restriction on the linking between $c$ and $K$. 

\begin{theorem}
\label{thm:slicegenera_seifertgenera}
Let $\{ K_n \}$ be a twist family of knots obtained by twisting $K$ along $c$. 

If either 
\begin{enumerate}
\item there exists a constant $C \geq 0$ such that $g(K_n)-g_4(K_n) \leq C$ for infinitely many integers $n$, or
\item $g(K_n) / g_4(K_n) \to 1$ as $n \to \infty$ or $n \to -\infty$,
\end{enumerate}
then either $\wind_c(K)= 0$ or $c$ links $K$ coherently. 
\end{theorem}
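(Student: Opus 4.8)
The plan is to prove the contrapositive: assuming $\omega = \wind_c(K) > 0$ and that $c$ does \emph{not} link $K$ coherently, so that $0 < \omega < \eta$ where $\eta = \wrap_c(K)$, I will show that both hypotheses (1) and (2) fail. Fix a disk $D$ bounded by $c$ that realizes the wrapping number, meeting $K$ in $\eta$ points, of which $p$ carry one sign and $q = \eta - p$ the other; orienting so that $p \ge q$, we have $p + q = \eta$ and $p - q = \omega$, and the hypothesis $0 < \omega < \eta$ reads exactly as $p > q \ge 1$. The strategy is to show that $g(K_n)$ and $g_4(K_n)$ each grow linearly in $|n|$, to pin down their growth rates from the appropriate sides, and to prove that the Seifert rate strictly exceeds the slice rate precisely under this hypothesis. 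I then only need a lower bound on $g(K_n)$ and an upper bound on $g_4(K_n)$, so that their ratio and difference can be controlled in both twisting directions $n \to \pm\infty$.

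First I would bound the slice genus from above at a rate governed by the winding number. Pushing $K_n$ into $B^4$ and performing $q$ oriented saddle moves that merge the $q$ oppositely oriented strands through $D$ with $q$ of the like-oriented strands reduces the twist region to $\omega = p-q$ coherently oriented strands at a cost of genus (and a bounded number of extra bands to restore connectivity) independent of $n$; the remaining $n$ full twists on $\omega$ coherent strands bound a surface of genus $\binom{\omega}{2}|n| + O(1)$. This yields
\[
g_4(K_n) \le \binom{\omega}{2}\,|n| + A
\]
for some constant $A$, so the slice rate is at most $s_4 = \binom{\omega}{2} = \binom{p-q}{2}$.

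The crux is a matching lower bound for the Seifert genus at a strictly larger rate. The efficient Seifert surface for $K_n$, built from a taut (or fiber) surface for the exterior of $K \cup c$ and twisted $n$ times, meets $D$ coherently along the $p$ like-oriented and the $q$ like-oriented strands separately; each full twist contributes genus only through the $\binom{p}{2} + \binom{q}{2}$ pairs of coherently oriented strands, the $pq$ antiparallel pairs contributing none, as in the genus-one behavior of twist knots. The hard part, and the main obstacle, is to promote this \emph{construction} to a genuine lower bound
\[
g(K_n) \ge \Bigl(\tbinom{p}{2} + \tbinom{q}{2}\Bigr)|n| - B,
\]
ruling out some unforeseen smaller-genus surface. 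I would establish this through Thurston-norm and sutured-manifold methods in the spirit of Gabai: the relevant homology class in the $(-1/n)$-filling of the exterior of $K \cup c$ has Thurston norm growing linearly in $|n|$, and a sutured decomposition computes the slope as exactly the coherent-pair count $s_g = \binom{p}{2} + \binom{q}{2}$. This is the step where the actual geometry of the twisting, rather than mere homology (which sees only $\omega$), is essential.

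Granting these two bounds, a direct computation gives
\[
s_g - s_4 = \binom{p}{2} + \binom{q}{2} - \binom{p-q}{2} = q(p-1),
\]
which is strictly positive exactly when $p > q \ge 1$, i.e.\ under our standing hypothesis. Consequently $g(K_n) - g_4(K_n) \ge q(p-1)\,|n| - (A+B) \to \infty$ as $|n| \to \infty$, so the difference exceeds any fixed constant $C$ for all but finitely many $n$, negating hypothesis (1). Likewise, in either twisting direction,
\[
\frac{g(K_n)}{g_4(K_n)} \ge \frac{s_g\,|n| - B}{s_4\,|n| + A} \longrightarrow \frac{s_g}{s_4} > 1
\]
(interpreted as $+\infty$ when $s_4 = 0$, i.e.\ $\omega = 1$), so $\liminf g(K_n)/g_4(K_n) > 1$ and the ratio cannot tend to $1$, negating hypothesis (2). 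This proves the contrapositive and hence the theorem; note that Theorem~\ref{thm:values}(1) is consistent with and refines this conclusion, identifying the limiting ratio as $s_g/s_4 \in \mathbb{Q} \cup \{\infty\}$.
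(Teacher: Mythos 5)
Your overall architecture (contrapositive; slice genus grows at rate at most $\binom{\omega}{2}$ per twist, Seifert genus at a strictly larger rate when the linking is incoherent) is the same as the paper's, and your slice-genus upper bound is essentially Proposition~\ref{prop:upperslicebound}. But the step you yourself flag as the crux is not just unproven --- the claimed slope is \emph{false}. By Theorem~\ref{thm:twistgenera} (quoting \cite{BT}), $2g(K_n) = 2G + n\omega\, x([D])$ for large $n>0$, and since the $\eta$--punctured disk $D$ satisfies $-\chi(D)=\eta-1$, we have $x([D]) \le \eta-1$; hence the Seifert genus grows at rate at most $\omega(\eta-1)/2$ per twist. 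Your claimed lower-bound slope satisfies
\[
\binom{p}{2}+\binom{q}{2}-\frac{\omega(\eta-1)}{2}
=\frac{(p^2+q^2-p-q)-(p-q)(p+q-1)}{2}
=q(q-1),
\]
which is strictly positive whenever $q\ge 2$. For instance, with $p=3$, $q=2$ (so $\omega=1$, $\eta=5$) you assert $g(K_n)\ge 4|n|-B$, while in fact $g(K_n)\le G+2n$ for large $n$. The underlying heuristic --- that each coherent pair of strands contributes a fixed amount of genus per full twist while antiparallel pairs contribute nothing --- does not decouple pairwise: the correct slope is $\omega\, x([D])/2$, in which the winding number enters as a multiplicative factor, and it cannot be computed from the pair counts $p,q$ alone. (The extreme case $\omega=0$, $p=q\ge 2$ makes this vivid: the genus is then \emph{bounded}, yet your formula predicts linear growth.) Even in the case $q=1$, where your slope coincides with the maximal possible value, your bound presumes that the punctured disk is norm-minimizing in its class, which is not justified. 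Relatedly, your closing identification of the limit ratio as $s_g/s_4$ contradicts Theorem~\ref{limit_omega>=1}, which gives $x([D])/(\omega-1)$.

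The fix is the paper's observation that the exact slope is unnecessary: one only needs the \emph{dichotomy}. Let $S$ be a norm-minimizing surface in the class $[D]$, with $\partial S$ consisting of $\omega$ meridians of $K$ and a longitude of $c$, so $x([D]) = 2g(S)+\omega-1$. If $g(S)=0$ then $S$ caps off to a disk that $K$ meets coherently, i.e.\ $c$ links $K$ coherently; so incoherence (with $\omega\ge1$) forces $g(S)\ge 1$, hence $x([D])\ge \omega+1$, hence a Seifert slope of at least $\omega(\omega+1)/2$, strictly exceeding the slice slope $\omega(\omega-1)/2$. That strict gap of at least $n\omega g(S)$ is exactly what negates hypotheses (1) and (2) (with the case $\omega=1<\eta$ handled separately, since then $g_4(K_n)$ is bounded while $g(K_n)\to\infty$), and it requires no computation of the exact growth rate. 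Note also that Theorem~\ref{thm:twistgenera} is stated only for $n\to+\infty$; the direction $n\to-\infty$ must be handled by passing to the mirrored family $\{\overline{K_{-n}}\}$, as the paper does, rather than by asserting two-sided linear growth outright.
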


For winding number $0$ twist families, since $g(K_n)$ and hence also $g_4(K_n)$ are bounded above, 
the converse to Theorem~\ref{thm:slicegenera_seifertgenera}(1) holds.  
However it is simple to create examples of winding number $0$ twist families for which the converse of Theorem~\ref{thm:slicegenera_seifertgenera}(2) fails, 
see Proposition~\ref{prop:w=0}.  
On the other hand, for coherent twist families, we have a full converse when we discard trivialities.   
If $c$ links $K$ coherently at most once (so that $c$ is either a meridian or split from $K$), then $K_n=K$ for all $n$ and the converse to Theorem~\ref{thm:slicegenera_seifertgenera}(1) always holds with $C=g(K)-g_4(K)$ 
while the converse to (2) holds only when $g(K)=g_4(K)$.

\begin{theorem}
\label{thm:coherent}
Let $\{ K_n \}$ be a twist family of knots obtained by twisting $K$ along $c$. 

If $c$ links $K$ coherently at least twice, then both
\begin{enumerate}
\item there exists constants $C_+,C_- \geq 0$ such that $g(K_n)-g_4(K_n) = C_\pm$ for sufficiently large integers $\pm n$, and hence
\item $g(K_n) / g_4(K_n) \to 1$ as $|n| \to \infty$.
\end{enumerate}
\end{theorem}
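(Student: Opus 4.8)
The plan is to reduce to the positive twisting direction and then exploit strong quasipositivity of the twist region. Since $g$ and $g_4$ are invariant under mirroring while mirroring reverses the sense of the twisting, the mirror knot $\overline{K}$ also links $\overline{c}$ coherently and satisfies $\overline{K_n}=(\overline{K})_{-n}$; hence the behaviour of $g-g_4$ along $\{K_n\}$ as $n\to-\infty$ is identical to its behaviour along the mirror family $\{(\overline{K})_m\}$ as $m\to+\infty$. It therefore suffices to prove that $g(K_n)-g_4(K_n)$ is eventually constant as $n\to+\infty$; the constant $C_-$ is then produced by applying the same argument to $\overline{K}$. After possibly mirroring we may assume $c$ bounds a disk meeting $K$ in $\omega=\eta$ points all of the same (say positive) sign, and since $\omega=\eta\ge 2$ the relevant slope $\binom{\omega}{2}$ is at least $1$.

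I would first control the Seifert genus. Because $c$ links $K$ coherently, there is a Seifert surface $F$ for $K$ meeting $c$ in $\omega$ points all of the same sign; twisting along $c$ inserts $n$ full twists among the $\omega$ sheets of $F$ passing through the twisting region and yields a Seifert surface $F_n$ for $K_n$ whose genus grows by exactly $\binom{\omega}{2}$ per twist, so $g(F_n)=\binom{\omega}{2}\,n+\mathrm{const}$. Invoking the behaviour of the Seifert genus under twisting established earlier (incompressibility/tautness of $F_n$ for large $|n|$ via sutured manifold theory), these surfaces are genus minimizing once $n$ is large, giving $g(K_n)=\binom{\omega}{2}\,n+a_+$ for a constant $a_+$ and all $n\gg 0$.

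Next I would pin down the slice genus. For $n\gg 0$ the inserted full twists among the coherently oriented sheets make the twist region of $F_n$ a quasipositive (indeed positively banded) piece, so the slice--Bennequin inequality together with the local Thom conjecture of Kronheimer--Mrowka forces a slice-genus lower bound of slope $\binom{\omega}{2}$; combined with the trivial upper bound $g_4(K_n)\le g(K_n)$ this squeezes $g_4(K_n)$ to be eventually linear of slope $\binom{\omega}{2}$, whence the nonnegative integer defect $g(K_n)-g_4(K_n)$ is bounded for $n\gg 0$. To upgrade boundedness to eventual constancy I would track a slice--torus concordance invariant $\nu$ (e.g.\ $\tau$ or $s/2$) satisfying $\nu\le g_4\le g$: adding one positive full twist raises both $g$ and $\nu(K_n)$ by $\binom{\omega}{2}$, so $g(K_n)-\nu(K_n)$ is eventually constant, and this pins $g(K_n)-g_4(K_n)$ to a constant $C_+\ge 0$ once one knows $\nu$ is sharp for $g_4$ on the eventual family. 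Assertion (2) is then immediate: since $g(K_n)\to\infty$ with slope $\binom{\omega}{2}\ge 1$ while $g(K_n)-g_4(K_n)\to C_\pm$, we get $g_4(K_n)\to\infty$ and $g(K_n)/g_4(K_n)\to 1$.

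The main obstacle is precisely upgrading the bounded defect to an eventually constant one, i.e.\ obtaining the slice-genus lower bound at the \emph{full} slope $\binom{\omega}{2}$ together with sharpness. Away from the idealized braid-axis case the surface $F_n$ is not globally quasipositive: a fixed local knotting can persist (for instance a connect summand with $g>g_4$, such as $6_1$, yields $C_\pm>0$), so one cannot simply quote that $F_n$ realizes the slice genus. The real work is to isolate the growing quasipositive twist region from this fixed remainder---presumably by exhibiting $F_n$ for $n\gg 0$ as a Murasugi sum of a fixed surface with a quasipositive twisted surface---and to argue that the quasipositive part contributes equally to $g$ and $g_4$ while the fixed part contributes a constant defect. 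This localization, rather than the elementary genus bookkeeping, is where the essential content of the theorem lies.
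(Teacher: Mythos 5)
You have the right skeleton, and it matches the paper's: reduce to $n\to+\infty$ by mirroring, get $g(K_n)=G+n\omega(\omega-1)/2$ for large $n$ from Theorem~\ref{thm:twistgenera} with $x([D])=\omega-1$, show $g_4(K_n)$ is eventually \emph{exactly} linear with the same slope, and deduce (2) from (1) since $g(K_n)\to\infty$. But the crux --- the matching-slope lower bound on $g_4$ and the upgrade from bounded defect to constant defect --- is exactly where your proposal stops, as you yourself concede, and neither of your two suggested mechanisms closes it. (i) The slice--Bennequin/Kronheimer--Mrowka route requires quasipositivity, which the surfaces $\hat F_n$ lack globally; and the Murasugi-sum structure (Theorem~\ref{thm:Murasugi-sum}) controls Seifert genus and fiberedness, not slice genus --- there is no known principle by which a quasipositive Murasugi summand ``contributes equally to $g$ and $g_4$,'' so the localization you flag as ``the real work'' is not merely unexecuted but has no available tool behind it. (ii) The slice--torus fallback needs two unproven inputs: that one positive full twist raises $\nu$ by \emph{exactly} $\omega(\omega-1)/2$ (the genus-$\omega(\omega-1)/2$ cobordism between $K_n$ and $K_{n+1}$ only yields $|\nu(K_{n+1})-\nu(K_n)|\le\omega(\omega-1)/2$), and that $\nu$ is eventually sharp for $g_4$ along the family, which nothing in the hypotheses provides --- a coherent family carrying a fixed connected summand with $g_4>\tau$ is expected to keep $g_4(K_n)-\tau(K_n)$ positive for all $n$.

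The paper supplies the missing ingredient by a different and sharpness-free mechanism. The lower bound comes not from quasipositivity but from the Beliakova--Wehrli extension of the Rasmussen invariant to links \cite{BW}: the explicit banding cobordism (a planar surface with $-\chi=\omega$) from $K_n$ to $K_0\sqcup T_{\omega,\,n\omega}$, together with Cavallo's computation $s(T_{\omega,\,n\omega})=(\omega-1)(n\omega-1)$ \cite{Cavallo} and $s\le 2g_4$ \cite{Ras}, gives $2g_4(K_n)\ge s(K_0)-2\omega+n\omega(\omega-1)$ (Proposition~\ref{prop:newlowerslicebound} with $\eta=\omega$). Given this cumulative bound, constancy follows by elementary integrality rather than sharpness of any invariant: Proposition~\ref{prop:upperslicebound} gives the per-step upper bound $g_4(K_{n+1})-g_4(K_n)\le\omega(\omega-1)/2$ for \emph{all} $n$ in the coherent case, so $g_4(K_n)-n\omega(\omega-1)/2$ is a non-increasing, integer-valued sequence bounded below, hence eventually constant; this is Lemma~\ref{lem:sliceequality}, phrased there as the count that at most $g_4(K_0)-s(K_0)/2+\omega$ steps can be strict. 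So $g_4$ is eventually exactly linear with the same slope as $g$, producing $C_+$ (and $C_-$ by mirroring), and (2) follows as you say. In short, your reductions and bookkeeping are correct, but the theorem's essential content is the linear-in-$n$ lower bound at the full slope $\omega(\omega-1)/2$, which the paper extracts from the link $s$-invariant of an explicit cobordism to a coherently oriented torus link --- not from any quasipositivity or localization of the minimal genus surfaces.
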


Furthermore, when $c$ links $K$ coherently,
the knots $K_n$ with $n$ sufficiently large have minimal genus Seifert surfaces that are the Murasugi sum of a fixed surface with fibers of torus links.

\begin{theorem}
\label{thm:Murasugi-sum}
Let $\{K_n\}$ be a twist family in which $c$ links $K$ coherently $\omega>0$ times. 
Then there is an integer $n_0$ such that for all $n > n_0$, 
a minimal genus Seifert surface of $K_n$ may be obtained as a Murasugi sum of a minimal genus Seifert surface for $K_{n_0}$ and the fiber surface for the $(\omega, (n-n_0) \omega + 1)$--torus knot. 
\end{theorem}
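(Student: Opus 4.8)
The plan is to localize the twisting to a ball containing the $\omega$ coherent strands and to realize the extra twisting as a Murasugi sum with a torus--knot fiber, with the ``$+1$'' coming from a shared seam. Since $c$ links $K$ coherently $\omega$ times, I would fix a disk $D$ with $\partial D = c$ meeting $K$ transversely in $\omega$ points of one sign, together with a ball $B_0$ (a neighborhood of $D$ and the nearby strands) in which $K=K_0$ appears as $\omega$ trivial parallel strands threading $D$ coherently. Twisting along $c$ replaces the trivial tangle in $B_0$ by the full--twist tangle $\Delta^{2n}$ on these $\omega$ strands, where $\Delta^2=(\sigma_1\cdots\sigma_{\omega-1})^{\omega}$ is a full twist (the sign chosen so that large positive $n$ gives positive twisting), while $K_n$ agrees with a single fixed outer tangle outside $B_0$. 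I would emphasize that I do \emph{not} claim $K$ is a closed braid about $c$: coherent linking is weaker, since the strands may be knotted outside $B_0$. All surface modifications take place inside $B_0$, where $K_n\cap B_0$ is a piece of the $(\omega,n\omega)$ torus--link braid.

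Next I would build, for each $n$, a standard Seifert surface $\hat S_n$ that inside $B_0$ is the Bennequin (Seifert's--algorithm) piece of $\Delta^{2n}$, namely $\omega$ sheets following the strands joined by $n\omega(\omega-1)$ positively twisted bands, and that agrees outside $B_0$ with one fixed spanning surface of the outer tangle. Passing from $\hat S_{n_0}$ to $\hat S_n$ only enlarges the torus--braid piece by the extra full twists $\Delta^{2(n-n_0)}=(\sigma_1\cdots\sigma_{\omega-1})^{(n-n_0)\omega}$. I would exhibit this enlargement as a Murasugi sum $\hat S_n=\hat S_{n_0}*\Sigma$, where the summing region is a $2\omega$--gon disk $R$ lying on a Murasugi sphere inside $B_0$ and meeting the $\omega$ sheets of $\hat S_{n_0}$ in $\omega$ arcs near $D$; concretely $R$ is the topmost coherent ``fan'' $\sigma_1\cdots\sigma_{\omega-1}$ of the $\Delta^{2n_0}$ block, shared with $\hat S_{n_0}$. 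Together with the $(n-n_0)\omega$ fans of the new twist region, the plumbed summand $\Sigma$ is the Bennequin fiber of $(\sigma_1\cdots\sigma_{\omega-1})^{(n-n_0)\omega+1}$, that is, the fiber of the $(\omega,(n-n_0)\omega+1)$ torus knot (a knot, since $\gcd(\omega,(n-n_0)\omega+1)=1$); this is exactly where the ``$+1$'' originates. A direct check that plumbing $\Sigma$ along $R$ inserts precisely $(n-n_0)$ full twists into the $\omega$ strands gives $\partial\hat S_n=K_n$, and the Euler characteristics are consistent, $\chi(\hat S_n)=\chi(\hat S_{n_0})+\chi(\Sigma)-1$.

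It remains to show $\hat S_n$ is of minimal genus, and here I would argue in two steps, the first of which I expect to be the main obstacle. First I would fix $n_0$ large enough that the standard surface $\hat S_{n_0}$ is itself minimal genus for $K_{n_0}$; I would establish this by showing the complementary sutured manifold is taut, running a sutured--manifold decomposition that uses product--disk decompositions across the highly twisted bands of the torus--braid piece in $B_0$ to reduce to the fixed outer piece, the point being that making the twisting large forces this decomposition to succeed. Second, with $\hat S_{n_0}$ minimal and $\Sigma$ minimal (being a fiber surface of a torus knot, hence Thurston--norm minimizing), Gabai's theorem that a Murasugi sum is norm minimizing if and only if each summand is shows at once that $\hat S_n=\hat S_{n_0}*\Sigma$ is minimal genus for every $n>n_0$. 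This propagates minimality from the single base case $n_0$ to the whole tail, matching the form of the conclusion. The orientation conventions are chosen so that positive twisting yields the positive torus knot $(\omega,(n-n_0)\omega+1)$, and the genus then grows with the expected slope $\omega(\omega-1)/2$, consistent with Theorem~\ref{thm:coherent}.
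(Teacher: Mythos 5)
Your construction is essentially the paper's: localizing the twist to a ball around the coherent disk, building the Bennequin twist block on $\omega$ coherently oriented strands, and exhibiting the passage from $\hat{S}_{n_0}$ to $\hat{S}_n$ as a Murasugi sum along a $2\omega$--gon whose shared fan supplies the $+1$ in $(\omega,(n-n_0)\omega+1)$ is exactly the mechanism of the paper's proof (which follows \cite[Lemma~3.4]{BEVhM}, and which likewise first normalizes, passing from $N_0$ to $n_0=N_0+1$ by adding $\omega(\omega-1)$ bands so that a coherent fan is available to plumb along --- your ``topmost fan of the $\Delta^{2n_0}$ block'' presumes the same normalization). Your propagation step is also sound, though it takes a genuinely different route: you invoke Gabai's theorem that a Murasugi sum of minimal genus Seifert surfaces is minimal genus \cite{gabai-fibered}, whereas the paper verifies minimality arithmetically, comparing $\chi(\hat{F}_n)$ against the exact genus growth $2g(K_n)=2G+n\omega(\omega-1)$ furnished by Theorem~\ref{thm:twistgenera} (\cite[Theorem~5.3]{BT}). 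Either propagation works once the base case is in hand.

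The genuine gap is your step (a), and it is worse than ``the main obstacle'': as stated it is false. With an \emph{arbitrary} fixed spanning surface for the outer tangle, the standard surface $\hat{S}_n$ has genus $g_0+n\omega(\omega-1)/2$, where the constant $g_0$ is determined by the outer piece, while by Theorem~\ref{thm:twistgenera} the true minimal genus is $G+n\omega(\omega-1)/2$ for all large $n$. If $g_0>G$, the defect $g_0-G$ persists for every $n$: increasing the twisting enlarges only the torus--braid block, which contributes the same amount to both sides, so no amount of twisting ``forces the decomposition to succeed.'' Indeed, your own proposed method points at this: product--disk decompositions preserve tautness in both directions, so if they reduce the complement of $\hat{S}_n$ to the complement of the outer piece, then minimality of $\hat{S}_n$ is equivalent to a condition on the outer piece that is \emph{independent of $n$} --- the correct criterion is that the outer piece realize the relative Thurston norm, not that the twisting be large. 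What is actually needed, and what the paper imports from Baker--Taylor, is the existence of a norm-realizing surface in standard position: \cite[Theorem~4.6]{BT} provides, for all sufficiently large $N_0$, a minimal genus Seifert surface for $K_{N_0}$ that $c$ intersects coherently in $\omega$ points (its outer part then serves as your fixed outer piece for all larger $n$), and \cite[Theorem~5.3]{BT} provides the genus formula; both rest on Gabai's sutured-manifold results relating the Thurston norm to Dehn filling, which your sketched decomposition argument would in effect have to reprove. Until that input is supplied, your proof establishes only that $\hat{S}_n$ is a Seifert surface for $K_n$ carrying the claimed Murasugi sum structure, not that it has minimal genus.
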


\subsection{Strongly quasipositive knots and tight fibered knots}

Briefly, a knot in $S^3$ is {\em strongly quasipositive} if it is the boundary of a {\em quasipositive} Seifert surface, 
a special kind of Seifert surface obtained from parallel disks by attaching positive bands in a particular kind of braided manner; 
see the leftmost picture in Figure~\ref{quasipositive} for a quick reminder or \cite{Rudolph_HandBook} for  more details and  context.   
We say a fibered knot in $S^3$ is {\em tight} if, 
as an open book for $S^3$, it supports the positive tight contact structure on $S^3$.   
Hedden proved that tight fibered knots are precisely the fibered, strongly quasipositive knots \cite[Proposition~2.1]{Hed_positive} (see also \cite[Theorem~3.1]{BI}).

The above theorems allow us to determine structure in twist families for which the Seifert genus equals the slice genus for infinitely many of the knots. For instance, 
Theorem~\ref{thm:slicegenera_seifertgenera} almost immediately yields the following corollary about strongly quasipositive knots and tight fibered knots 
(i.e.\ fibered strongly quasipositive knots).

\begin{corollary}
\label{cor:sqptwist}
Let $\{ K_n \}$ be a twist family of knots obtained by twisting $K$ along $c$ 
such that $c$ is neither split from $K$ nor a meridian of $K$.
\begin{enumerate}
\item If  $\{ K_n \}$ or its mirror $\{ \overline{K}_n \}$ contains infinitely many strongly quasipositive knots, 
then either $\wind_c(K)=0$ or $c$ links $K$ coherently.
\item If  $\{ K_n \}$ or its mirror $\{ \overline{K}_n \}$ contains infinitely many tight fibered knots, 
then $c$ links $K$ coherently.
\end{enumerate}
\end{corollary}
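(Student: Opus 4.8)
The plan is to deduce both statements from Theorem~\ref{thm:slicegenera_seifertgenera} together with two classical inputs: first, that a strongly quasipositive knot $K$ satisfies $g_4(K)=g(K)$ (via the slice--Bennequin inequality, equivalently $\tau(K)=g(K)$; see \cite{Rudolph_HandBook}), and second, Hedden's identification of tight fibered knots with fibered strongly quasipositive knots \cite{Hed_positive}. I would also record the elementary symmetries that $g$, $g_4$, $\wind_c$, $\wrap_c$, and coherence are all preserved under mirroring, and that the mirror family $\{\overline{K}_n\}$ is precisely the twist family obtained by twisting $\overline{K}$ along $\overline{c}$ (with the index reversed), so that any statement proved for $\{K_n\}$ transfers verbatim to $\{\overline{K}_n\}$.

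For part (1), suppose $\{K_n\}$ contains strongly quasipositive knots for infinitely many $n$. For each such $n$ the first input gives $g(K_n)-g_4(K_n)=0$, so hypothesis (1) of Theorem~\ref{thm:slicegenera_seifertgenera} holds with $C=0$; hence $\wind_c(K)=0$ or $c$ links $K$ coherently. If instead the mirror family $\{\overline{K}_n\}$ contains infinitely many strongly quasipositive knots, the same argument applied to the twist family of $\overline{K}$ along $\overline{c}$ yields $\wind_{\overline{c}}(\overline{K})=0$ or $\overline{c}$ links $\overline{K}$ coherently, which by the mirror symmetries is the identical dichotomy for $K$ and $c$. This settles (1).

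For part (2), a tight fibered knot is in particular strongly quasipositive, so part (1) leaves us with $\wind_c(K)=0$ or $c$ links $K$ coherently; it remains to exclude $\wind_c(K)=0$. This is the heart of the matter. Assume $\wind_c(K)=0$ while $c$ is neither split from $K$ nor a meridian, so $\wrap_c(K)\ge 2$. Since $\mathrm{lk}(K,c)=0$, the knot $K$ bounds a Seifert surface $S$ disjoint from $c$, and $S$ is then a common Seifert surface for every $K_n$, so $g(K_n)\le g(S)$ is bounded. Twisting along $c$ alters only the Seifert form, through the linking numbers of a basis of $H_1(S)$ with $c$: writing $w$ for the vector of these linking numbers and $V_0$ for the Seifert form of $S$ viewed as a surface for $K_0$, one has $V_n=V_0+n\,ww^{T}$, whence by the matrix determinant lemma a twisting formula $\Delta_{K_n}(t)\doteq \Delta_{K_0}(t)+n\,(1-t)\,w^{T}\!\operatorname{adj}(V_0-tV_0^{T})\,w$. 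Since a fibered knot has monic Alexander polynomial of degree $2g$, and since bounded genus forces infinitely many of the tight fibered $K_n$ to share a common genus, the extreme coefficients of $\Delta_{K_n}$ would have to stay $\pm1$ along an affine-in-$n$ family, i.e.\ be eventually constant in $n$.

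The main obstacle is precisely to rule this out: I must show that the $\wind_c(K)=0$, $\wrap_c(K)\ge 2$ geometry forces the leading coefficient of $\Delta_{K_n}$ to grow without bound (as it visibly does for the twist knots, where that coefficient is $\pm n$). When the top-degree part of the twisting term $(1-t)\,w^{T}\!\operatorname{adj}(V_0-tV_0^{T})\,w$ genuinely dominates $\Delta_{K_0}$, the leading coefficient has absolute value tending to infinity and no large $|n|$ can yield a monic polynomial, contradicting fiberedness. The delicate point is the possibly degenerate situation in which this top part cancels; I would handle it with a finer detector of fiberedness, either the knot Floer criterion that $\HFK$ of $K_n$ in its top Alexander grading have rank one, tracked through the behavior of $\HFK$ under twisting, or sutured-manifold theory applied to the fixed exterior $S^3\setminus N(K\cup c)$, where fiberedness of infinitely many $(-1/n)$--fillings is incompatible with $\wrap_c(K)\ge 2$. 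Excluding $\wind_c(K)=0$ by either route completes the proof that $c$ links $K$ coherently.
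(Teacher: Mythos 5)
Your part (1) is correct and is essentially the paper's own argument: strongly quasipositive knots satisfy $g=g_4$ \cite[Theorem~4]{Liv}, so Theorem~\ref{thm:slicegenera_seifertgenera}(1) applies with $C=0$, and mirroring transfers the conclusion since $g-g_4$, $\wind_c$, $\wrap_c$, and coherence are all mirror-invariant, exactly as you note.

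Part (2), however, has a genuine gap, and you located it yourself: everything hinges on excluding $\wind_c(K)=0$, and your proposal never actually does so. The Seifert-form computation $V_n=V_0+n\,ww^{T}$ and the affine-in-$n$ formula for $\Delta_{K_n}$ are fine, but the leading-coefficient argument built on them is structurally incapable of finishing: the ``degenerate'' cancellation you flag is not an edge case but a genuinely occurring phenomenon. Stallings twists --- twisting along an unknotted curve $c$ on the fiber surface with surface framing zero --- produce winding-number-zero families with $\wrap_c(K)\ge 2$ in which \emph{every} $K_n$ is fibered, so $\Delta_{K_n}$ remains monic of fixed degree for all $n$ and no coefficient growth can be extracted. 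The same examples refute your second fallback, the sutured-manifold claim that ``fiberedness of infinitely many $(-1/n)$--fillings is incompatible with $\wrap_c(K)\ge 2$'': fiberedness alone is perfectly compatible with $\omega=0$, just as Example~\ref{w=0_twist_quasipositive} shows strong quasipositivity alone is compatible with $\omega=0$ (which is precisely why part (1) must allow that case). Only the conjunction of the two properties --- tightness --- is obstructed, so any correct argument must use that conjunction in an essential way; your first fallback (the rank-one top-grading $\HFK$ criterion tracked under twisting) is named but not executed. The paper closes exactly this step by invoking an external result, \cite[Theorem~3.1]{BM}, which states that a twist family with $\wind_c(K)=0$ (and $c$ neither split from $K$ nor a meridian) contains only finitely many tight fibered knots. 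Absent that input or a worked-out substitute, your proof of (2) is incomplete.
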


\begin{proof}
If $K$ is a strongly quasipositive knot $K$, 
then $g(K) = g_4(K)$ \cite[Theorem~4]{Liv}, 
and (1) immediately follows from Theorem~\ref{thm:slicegenera_seifertgenera}. 
For the second assertion, 
in addition to Theorem~\ref{thm:slicegenera_seifertgenera}, 
\cite[Theorem~3.1]{BM} is also needed to show $\wind_c(K) \neq 0$ 
when the twist family contains infinitely many tight fibered knots.
\end{proof}

\begin{example}
\label{w=0_twist_quasipositive}
Let $K$ be a strongly quasipositive knot which bounds quasipositive Seifert surface $F$.  
Take an unknot $c$ as in Figure~\ref{quasipositive} which bounds a disk intersecting one of the positive bands of $F$ in a spanning arc. 
Then $\wind_c(K)= 0$, 
but $n$--twisting along $c$ makes $F$ into a quasipositive Seifert surface of $K_n$ for any non-positive integer $n$; 
see Figure~\ref{quasipositive}. 
Thus $K_n$ is a strongly quasipositive knot if $n \leq 0$. 
One may construct further examples using Rudolph's  ``braidzel'' surfaces \cite{Rudolph_braidzel}. 
\end{example}

\begin{figure}[h]
\begin{center}
\includegraphics[width=0.8\linewidth]{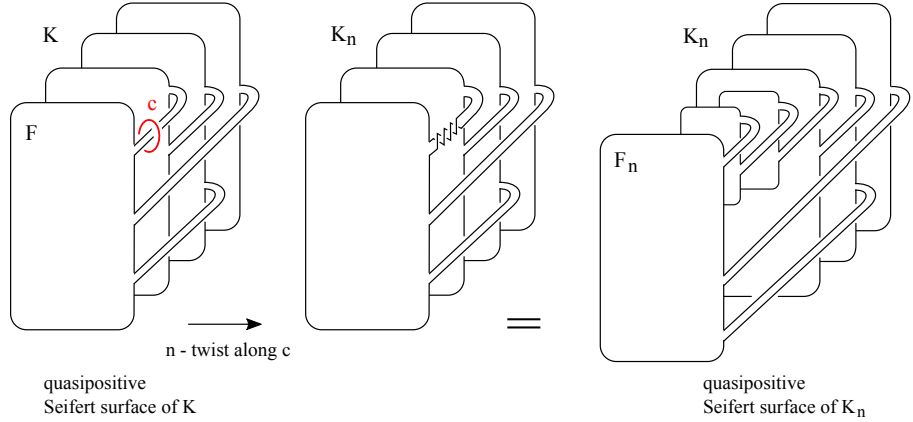}
\caption{$\wind_c(K) = 0$ and $K_n$ bounds a quasipositive Seifert surface for $n < 0$; here $n = -2$.}
\label{quasipositive}
\end{center}
\end{figure}

Theorem~\ref{thm:Murasugi-sum} enables us to further refine the above statement for tight fibered knots. 

\begin{theorem}
\label{thm:postwistquasipositive}
Let $\{ K_n \}$ be a twist family of knots obtained by twisting $K$ along $c$ such that $c$ is neither split from $K$ nor a meridian of $K$. 
If $K_n$ is a tight fibered knot for infinitely many integers $n$, 
then $c$ links $K$ coherently and there is a constant $N$ such that 
\begin{itemize}
\item $K_n$ is tight fibered for every $n \geq N$ and
\item $K_n$ is tight fibered for only finitely many $n < N$.
\end{itemize}
\end{theorem}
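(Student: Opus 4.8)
The plan is to feed the Murasugi sum structure of Theorem~\ref{thm:Murasugi-sum} into the behavior of open books under (de)stabilization, using the open book definition of ``tight fibered'' directly. First I would record the reductions. Since $K_n$ is tight fibered for infinitely many $n$, Corollary~\ref{cor:sqptwist}(2) forces $c$ to link $K$ coherently, and as $c$ is neither split from nor a meridian of $K$ we have $\omega = \wind_c(K) = \wrap_c(K) \geq 2$. Thus Theorem~\ref{thm:Murasugi-sum} applies: there is an $n_0$ so that for every $n > n_0$ a minimal genus Seifert surface $\Sigma_n$ of $K_n$ is the Murasugi sum $F_{n_0} * T_n$, where $F_{n_0}$ is a fixed minimal genus Seifert surface of $K_{n_0}$ and $T_n$ is the fiber of the positive torus knot $T(\omega,(n-n_0)\omega+1)$.

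Second, I would establish an ``all or nothing'' dichotomy for $n > n_0$. The fiber $T_n$ of a positive torus knot is a plumbing of positive Hopf bands (Stallings), so $\Sigma_n = F_{n_0} * T_n$ is obtained from $F_{n_0}$ by a sequence of positive Hopf plumbings; by Gabai's theorem on Murasugi sums, $\Sigma_n$ is a fiber surface if and only if $F_{n_0}$ is, and when it is, $\Sigma_n$ is a positive stabilization of the open book $F_{n_0}$. Since positive stabilization preserves the supported contact structure (Giroux), all the $\Sigma_n$ support the same contact structure as $F_{n_0}$. Hence, among $n > n_0$, either every $K_n$ is tight fibered or none is. In particular, if a single $K_m$ with $m > n_0$ is tight fibered, then $K_m$ is fibered, so $\Sigma_m$ is a fiber and Gabai's theorem shows $F_{n_0}$ is a fiber; the open book $\Sigma_m$ being a positive stabilization of $F_{n_0}$ then forces $F_{n_0}$ to support the tight contact structure, whence $K_{n_0}$, and therefore every $K_n$ with $n \geq n_0$, is tight fibered.

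Third, I would rule out the negative direction by the same mechanism read through a mirror. Mirroring gives $\overline{K_n} = (\overline{K})_{-n}$, and $(\overline{K},\overline{c})$ is again coherent with winding number $\omega \geq 2$, so Theorem~\ref{thm:Murasugi-sum} applied to $\overline K$ shows that for $-n$ large the fiber of $\overline{K_n}$ is a positive stabilization of a fixed fiber; taking mirrors, $\Sigma_n$ for $n \ll 0$ is a \emph{negative} stabilization of a fixed open book. If such a $K_n$ were tight fibered it would be fibered with fiber $\Sigma_n$, but a negatively stabilized open book supports an overtwisted contact structure (Goodman), contradicting tightness. Hence $K_n$ is tight fibered for only finitely many $n < 0$.

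Finally I would assemble these. The infinitely many tight fibered knots cannot all lie among the finitely many negative indices, so some $K_m$ with $m > n_0$ is tight fibered; the dichotomy of the second step then gives a constant $N$ (one may take $N = n_0$) with $K_n$ tight fibered for all $n \geq N$, while the third step bounds the tight fibered $K_n$ with $n < N$ to a finite set. I expect the main obstacle to be the precise identification, in the second and third steps, of the Murasugi sum $F_{n_0} * T_n$ with an honest positive (respectively negative) stabilization of an open book: one must verify that plumbing on the positive-torus-knot fiber genuinely realizes a sequence of Hopf band plumbings compatible with the fibration and its monodromy, so that the invariance and overtwistedness statements of Giroux and Goodman can be invoked. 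Once this compatibility is in hand, the contact-geometric inputs close both directions at once.
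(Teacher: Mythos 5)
Your proposal is correct in outline but takes a genuinely different route from the paper once the shared reductions (Corollary~\ref{cor:sqptwist}(2) for coherence, Theorem~\ref{thm:Murasugi-sum} for the Murasugi sum structure, Gabai's fibration theorem, and uniqueness of the minimal genus Seifert surface of a fibered knot) are in place. The paper never works with contact structures directly: it invokes Hedden's equivalence between tight fibered knots and fibered strongly quasipositive knots, and then runs both directions through Rudolph's theorem that a Murasugi sum is a quasipositive surface if and only if both summands are. On the positive side, a quasipositive fiber $\Sigma_N$ summed with a positive torus-knot fiber is again a quasipositive fiber; on the negative side, the negative torus-knot fiber is not quasipositive, so by the ``only if'' direction of Rudolph's theorem and uniqueness of the fiber, $K_n$ cannot be strongly quasipositive below a threshold. (The paper also chooses its constant slightly differently: it first proves finiteness on the negative side and then picks $N$ so that $K_N$ itself is tight fibered, whereas you deduce tightness of $K_{n_0}$ from a single tight $K_m$, $m>n_0$, by destabilizing; both are fine.) What your route buys is a proof living entirely on the open book side of the definition, with no appeal to Rudolph's quasipositive plumbing theorem; what the paper's route buys is that Rudolph's theorem applies verbatim to the general Murasugi sum, which is exactly the form Theorem~\ref{thm:Murasugi-sum} produces, so no stabilization bookkeeping is needed.

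The obstacle you flag at the end is a real gap as written: the Murasugi sum of Theorem~\ref{thm:Murasugi-sum} is along a $2\omega$--gon, and such a sum is not literally a sequence of square plumbings of positive Hopf bands onto $F_{n_0}$ (Stallings' plumbing description of the torus-knot fiber does not by itself reorganize the $2\omega$--gon sum into iterated Hopf plumbings), so you cannot quote Giroux's stabilization invariance, nor Goodman's negative-stabilization criterion, in the form you state them. The standard repair is Torisu's theorem that the open book obtained by Murasugi sum supports the connected sum of the supported contact structures. On $S^3$ this gives $\xi(\Sigma_n)=\xi(F_{n_0})\,\#\,\xi(T_n)$; since the positive torus-knot fiber supports the standard tight structure while the negative one supports an overtwisted structure, your dichotomy for $n>n_0$ \(tight if and only if $F_{n_0}$ is tight\) and your negative-side exclusion \(connected sum with an overtwisted structure is overtwisted\) both follow at once, with no need to realize the sum as honest Hopf plumbings. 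One further point to make explicit: when you pass from ``$K_n$ is tight fibered'' to ``its fiber is $\Sigma_n$,'' you are using that a fibered knot has a unique minimal genus Seifert surface together with the fact that Theorem~\ref{thm:Murasugi-sum} produces $\Sigma_n$ of minimal genus; the paper states this step, and your argument needs it in both the positive and negative directions.
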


Pushing these ideas further, 
we obtain a characterization of when the twisting circle $c$ is actually a braid axis for the twist family $\{K_n
\}$.

\begin{theorem}
\label{thm:braidaxis}
Let $\{K_n\}$ be a twist family obtained by twisting a knot $K$ along an unknot $c$ that is neither split from $K$ nor a meridian of $K$.
Then $c$ is a braid axis of $K$ if and only if both $K_n$ and $\overline{K_{-n}}$ are tight fibered for sufficiently large $n$.
\end{theorem}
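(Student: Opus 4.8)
The plan is to prove both implications, using throughout that being a closed braid about $c$ is invariant under twisting along $c$: inserting full twists along the disk bounded by $c$ carries a closed braid to a closed braid, so $c$ is a braid axis of $K$ if and only if it is a braid axis of $K_n$ for every $n$. It therefore suffices to understand a single convenient representative, and I would record $K$ as the closure $\widehat{T}$ of the $\omega$--string tangle $T$ cut out of the solid torus $S^3\setminus N(c)$ by a disk bounded by $c$; twisting then realizes $K_n=\widehat{T\Delta^{2n}}$, where $\Delta^2$ denotes the positive full twist on $\omega$ strands.

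For the forward implication, suppose $c$ is a braid axis, so that $K=\widehat{\beta}$ for a braid $\beta$ on $\omega=\wind_c(K)=\wrap_c(K)$ strands and $K_n=\widehat{\beta\Delta^{2n}}$. A Garside normal form argument shows that $\beta\Delta^{2n}$ is a \emph{positive} braid for all sufficiently large $n$; its closure is fibered by Stallings and strongly quasipositive by Rudolph, hence tight fibered by Hedden \cite{Hed_positive}. Applying the same reasoning to the reverse braid shows that $\overline{K_{-n}}=\widehat{\overline{\beta}\,\Delta^{2n}}$ is also the closure of a positive braid for large $n$, so it too is tight fibered. This gives the ``only if'' direction.

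For the converse, assume $K_n$ and $\overline{K_{-n}}$ are tight fibered for all large $n$. First I would extract coherence: applying Theorem~\ref{thm:postwistquasipositive} to $\{K_n\}$ shows that $c$ links $K$ coherently (so $\omega=\eta\ge 2$), and applying it to the mirror family $\{\overline{K_n}\}=\{(\overline K)_{-m}\}$ yields the same conclusion for $\overline c$ and $\overline K$. Thus $T$ is a \emph{coherently oriented} $\omega$--string tangle, and the remaining task is to promote coherence to braidedness, i.e.\ to show that $T$ is isotopic rel $\partial$ to an $\omega$--braid.

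To do this I would exploit the two-sided hypothesis through the Murasugi--sum structure of Theorem~\ref{thm:Murasugi-sum}. For $n\gg0$ a minimal genus Seifert surface of $K_n$ is a Murasugi sum of a fixed surface $F_{n_0}$ with the fiber of a \emph{positive} torus knot $T(\omega,(n-n_0)\omega+1)$; since $K_n$ is fibered and the torus-knot fiber is a fiber surface, the Gabai--Stallings theory forces $F_{n_0}$ to be a fiber surface, and because summing with a positive torus-knot fiber is an iterated positive Hopf plumbing (a positive stabilization) the supported contact structure of $K_n$ agrees with that of $K_{n_0}$, so tightness of $K_n$ descends to tightness of $K_{n_0}$. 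Running the identical argument on the mirror family shows the analogous central piece supports the conjugate, non-tight structure. The crux is to convert this simultaneous positivity on one side and negativity on the other into the geometric statement that $T$ carries no turn-backs or local knotting beyond a braid: a non-braided feature of $T$ (for instance a local knot summand) can remain strongly quasipositive after large positive twisting yet obstruct tightness of the mirror after large negative twisting, exactly as one checks for $T(2,k)\,\#\,(\text{trefoil})$. Making this dichotomy precise --- that a coherent tangle whose positive and negative large-twist closures are both tight fibered must itself be a braid --- is the main obstacle; once it is established, $K_n$ and hence $K$ is a closed braid about $c$, so $c$ is a braid axis, completing the ``if'' direction.
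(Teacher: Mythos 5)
Your forward direction is sound and matches the paper's: a braid axis yields positive braid closures after enough positive twists (and negative ones after negative twists), and closed positive braids are tight fibered; the paper phrases this via plumbings of positive Hopf bands rather than a Garside normal form computation, but the content is the same. Your extraction of coherence at the start of the converse (via Theorem~\ref{thm:postwistquasipositive}, equivalently Corollary~\ref{cor:sqptwist}(2)) also agrees with the paper. The problem is what comes next: the step you label ``the main obstacle'' --- that a coherent $\omega$--strand tangle $\kappa$ whose positive large-twist closures and mirrored negative large-twist closures are all tight fibered must itself be a braid --- \emph{is} the theorem, and your proposal stops at restating it rather than proving it. The Murasugi-sum and contact-structure observations you assemble beforehand (tightness descending from $K_n$ to $K_{n_0}$, the mirror family's summand being quasipositive on the mirrored side) are all preliminaries the paper also uses, but by themselves they only constrain each side of the family separately; they do not produce the interaction between the two sides that forces braidedness, and no amount of ``no turn-backs or local knotting'' heuristics substitutes for that.

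For comparison, here is how the paper closes exactly this gap. Writing $K_n$ as the closure of $\Delta^{2n}\kappa$ and $\overline{K_n}$ as the closure of $\overline{\kappa}\Delta^{-2n}$, it Murasugi-sums three fiber surfaces --- the fiber of $K_{N_++1}$, the fiber of the $(\omega,3)$--torus link, and the mirrored fiber of $\overline{K_{N_--1}}$ --- to build a quasipositive fiber surface whose boundary $K^*$ is a \emph{tight fibered} knot, namely the closure of the single tangle $\delta\,\Delta^{2(N_+-N_-)+4}\,\overline{\kappa}\kappa$; this is where the positive-twist and negative-twist hypotheses are made to interact. Since the double $D(\kappa)$, the closure of $\overline{\kappa}\kappa$, is a ribbon link, $K^*$ is ribbon concordant to the torus knot $T_{\omega,\,(N_+-N_-+2)\omega+1}$, and the concordance result for tight fibered knots \cite[Theorem 3]{Baker-NoteOnConcordance} upgrades this concordance to equality: $K^*$ \emph{is} that torus knot. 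Finally, Lemma~\ref{lem:torusknottwistfamily} (a twist family can contain more than five torus knots only if the knot is a $0$--bridge braid in the complement of the twisting circle) shows $K^*$ is a closed braid about its axis, and the two tangle lemmas, Lemma~\ref{lem:braidedtangleclosure} and Lemma~\ref{lem:braidedtangleproduct}, peel this back to conclude that $\kappa$ itself is a braid, i.e.\ $c$ is a braid axis of $K$. Without some argument of this kind --- converting the two-sided tightness into a single concordance-theoretic identification and then into a braidedness statement for the tangle --- your proposed proof of the ``if'' direction is incomplete.
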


\subsection{L-space knots}\label{sec:introLspace}
Recall that an {\em L-space} is a closed, compact, connected, oriented rational homology sphere $3$--manifold 
with ``minimal Heegaard Floer homology'' \cite{OS_lens}.
In this paper, we say a knot $K$ in $S^3$ is an {\em L-space knot} if $r$--surgery on $K$ produces an L-space for some slope $r \neq \infty$ 
(so $r$ is not the meridional slope and necessarily not $0$).  
We will say $K$ is a {\em positive} or {\em negative} L-space knot according to the sign of $r$; 
only the unknot is both a positive and negative L-space knot. 

Since every positive L-space knot is a tight fibered knot (\cite[Proposition~2.1]{Hed_positive} with \cite{Ghi,Ni, Ni2, Juh}), 
Corollary~\ref{cor:sqptwist}(2) and Theorem~\ref{thm:postwistquasipositive} apply to positive L-space knots as well.
Furthermore, 
we can confirm the intuition that L-space knots resulting from sufficiently large positive twistings are indeed positive L-space knots.  
Theorem~\ref{thm:positivetwistLspaceknots} below answers \cite[Question~7.2]{BM} in the positive and also improves \cite[Corollary~1.9 and Theorem~1.11]{BM}. 
Furthermore Theorem~\ref{thm:positivetwistLspaceknots}, 
together with \cite[Theorem~1.4]{BM}, settles the Conjecture 1.3 in \cite{BM} in the positive. 

\begin{theorem}
\label{thm:positivetwistLspaceknots}
Let $\{ K_n \}$ be a twist family of knots obtained by twisting $K$ along $c$ such that $c$ is neither split from $K$ nor a meridian of $K$.  
If $K_n$ is an L-space knot for infinitely many integers $n>0$ \(resp.\ $n < 0$\), then 
\begin{itemize}
\item $c$ links $K$ coherently and
\item there is a constant $N$ such that $K_n$ is a positive \(resp. negative\) L-space knot for all integers $n \geq N$ \(resp. $n \leq N$\).
\end{itemize}
\end{theorem}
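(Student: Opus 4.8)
The plan is to prove the $n>0$ statement; the $n<0$ statement then follows by applying it to the mirror twist family, since $\overline{K_n}=(\overline{K})_{-n}$ (mirroring reverses the twisting parameter) and mirroring interchanges positive and negative L-space knots, so "$K_n$ a negative L-space knot for all $n\le N$" is exactly the mirror of "$(\overline{K})_m$ a positive L-space knot for all $m\ge -N$." I will freely use three standing facts: every L-space knot is fibered, and a nontrivial one is either a positive or a negative L-space knot but not both; a positive L-space knot is a tight fibered knot; and a positive L-space knot of genus $g$ has L-space surgery slopes exactly $[2g-1,\infty]$.

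First I would establish that $c$ links $K$ coherently. Since $c$ is neither split nor a meridian, $\wrap_c(K)>1$, so at most two of the $K_n$ are unknots and every other L-space knot among them has a definite sign. Among the infinitely many $n>0$ with $K_n$ an L-space knot, at least one sign occurs infinitely often: if positive occurs infinitely often then $\{K_n\}$ contains infinitely many tight fibered knots, and if negative occurs infinitely often then $\overline{K_n}=(\overline{K})_{-n}$ is positive, hence tight fibered, so the mirror family $\{(\overline{K})_m\}$ contains infinitely many tight fibered knots. In either case Corollary~\ref{cor:sqptwist}(2) forces $c$ to link $K$ coherently.

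Next I would pin down the sign and the eventual fibering. Suppose, for contradiction, that negative L-space knots occur for infinitely many $n>0$. Then $(\overline{K})_m$ is tight fibered for infinitely many $m<0$, with $m\to-\infty$. Applying Theorem~\ref{thm:postwistquasipositive} to the mirror twist family (whose twisting circle is again neither split nor a meridian) produces a constant $N_1$ for which $(\overline{K})_m$ is tight fibered for only finitely many $m<N_1$ — contradicting the existence of infinitely many such $m$ tending to $-\infty$. Hence only finitely many $n>0$ give negative L-space knots, so infinitely many give positive L-space knots. Applying Theorem~\ref{thm:postwistquasipositive} to $\{K_n\}$ itself then yields a constant $N_0$ with $K_n$ tight fibered for every $n\ge N_0$, and by Theorem~\ref{thm:Murasugi-sum} the genus $g(K_n)$ grows linearly in $n$.

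It remains to upgrade "positive L-space for infinitely many $n\ge N_0$" to "positive L-space for all $n\ge N$." I would encode the property as an L-space surgery: $K_n$ is a positive L-space knot if and only if $S^3_M(K_n)$ is an L-space for one (equivalently every) $M\ge 2g(K_n)-1$, where the framings $M=M_n$ are controlled by the linear genus growth. Realizing $S^3_{M}(K_n)$ as surgery on the link $K\cup c$ (framing $M$ on $K$, slope $-1/n$ on $c$), the goal is to show that $\dim\HF$ of these manifolds is eventually minimal once it is minimal infinitely often. The main obstacle is precisely this monotonicity/persistence step, made delicate by the fact that both the knot and the surgery coefficient vary with $n$, so the L-space condition must be tracked through the twisting rather than checked manifold by manifold. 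I expect to control it through the limiting behavior of Heegaard Floer homology under coherent positive twisting — the top Alexander gradings of $\HFK(K_n)$ stabilize as $n\to\infty$, and combined with fiberedness (forcing rank one in the top grading) and the known linear genus growth this pins down the rank‑one, staircase shape characterizing positive L-space knots for all large $n$. This would give the desired $N$ with $K_n$ a positive L-space knot for every $n\ge N$.
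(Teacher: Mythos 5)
Your reduction to the $n>0$ case, the coherence argument via Corollary~\ref{cor:sqptwist}(2), and the sign-pinning step (ruling out infinitely many negative L-space knots among $n>0$ by applying Theorem~\ref{thm:postwistquasipositive} to the mirror family) are all correct and follow the paper's route --- the sign-pinning is exactly the Claim inside the paper's Lemma~\ref{lem:knots=>surgeries}. The problem is the final step. You correctly identify the persistence statement (``positive L-space for infinitely many $n$'' upgrades to ``for all large $n$'') as the crux, but what you offer there is an expectation, not an argument, and the mechanism you propose cannot deliver it: stabilization of the top Alexander gradings of $\HFK(K_n)$ together with fiberedness only gives rank one in the top grading, which holds for \emph{every} fibered knot and says nothing about the staircase structure in lower Alexander gradings that characterizes L-space knots; no monotonicity of $\dim\HF(S^3_{M_n}(K_n))$ follows from it. So as written the proof has a genuine gap precisely at the step you flag as the main obstacle.

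The paper closes this gap with two ideas absent from your sketch. First, fix a slope $r$ on $K$ and set $Y_r = K(r)-\nbhd(c)$, so that $K_n(r_n)=Y_r(-1/n)$ with $r_n = r+n\omega^2$; Lemma~\ref{lem:knots=>surgeries}(2) --- which rests on the slope comparison $\omega(\omega-1)<\omega^2$ between the genus growth $2g(K_n)-1$ and the framing growth $r_n$, together with \cite{OS_rational} --- shows $-1/n\in\mathcal{L}(Y_r)$ for infinitely many $n$. These slopes accumulate at $0$, and by Rasmussen--Rasmussen (Theorem~\ref{thm:RRslopes}) the set $\mathcal{L}(Y_r)$ is empty, a point, a closed interval, or the complement of the rational longitude; since $0$ cannot be the rational longitude (as $|H_1(Y_r(0))|=\omega^2 q<\infty$), one concludes $0\in\mathcal{L}(Y_r)$, i.e.\ the limit knot $K_\infty\subset S^1\times S^2$ is an L-space knot (Theorem~\ref{thm:limit}(2)). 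Second, with $K_\infty$ in hand the persistence is elementary: Lemma~\ref{lem:triad} exploits the identity $|H_1(K_{n+1}(r_{n+1}))| = |H_1(K_n(r_n))| + |H_1(K_\infty(r_\infty))|$ and the L-space triad criterion of \cite[Proposition~2.1]{OS_lens} to induct upward from a single positive L-space knot $K_N$ in the family (which your argument does supply). To repair your write-up, replace the speculative $\HFK$-stabilization paragraph with this limit-plus-triad argument; everything before it can stand.
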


The second assertion of this theorem will follow from Theorem~\ref{thm:limit} below which relates L-space surgeries and limits of twist families. 
Given a twist family of knots $\{ K_n \}$ obtained by twisting a knot $K$ along an unknot $c$, 
we can also consider the limit knot $\displaystyle K_{\infty} = \lim_{n\to \infty} K_n$.  
To make this more precise, consider that $K_n$ is the image of $K$ upon $(-1/n)$--surgery on $c$.  
Since $\displaystyle 0 = \lim_{n\to \infty} (-1/n)$, 
we define $K_{\infty}$ to be the image of $K$ upon $0$--surgery on $c$.  
Hence the limit knot $K_{\infty}$ is a knot in $S^1 \times S^2$.    
A slope $r$ on $K$ similarly yields a slope $r_n$ on $K_n$ and a slope $r_{\infty}$ on $K_{\infty}$.  
In particular, the manifold $K_{\infty}(r_{\infty})$ which arises as the limit of the manifolds $K_n(r_n)$ is the manifold obtained by $(r,\ 0)$--surgery on $K \cup c$. 
If $r$ is not a meridional slope on $K$, 
then $r_{\infty}$ is not a meridional slope on $K_{\infty}$ neither.

We will also say a knot $K_{\infty} \subset S^1 \times S^2$ is an {\em L-space knot} if some Dehn surgery on $K_{\infty}$ produces an L-space.  
An L-space knot $K$ in $S^1 \times S^2$ enjoys the following remarkable properties. 
 
\begin{remark}
\label{L-space_knot_S1xS2}
Assume that $K_{\infty}$ is an L-space knot in $S^1 \times S^2$. 
Then 
\begin{enumerate}
	\item  the exterior of $K_{\infty}$ is a ``generalized solid torus'' so that every non-trivial surgery on $K_{\infty}$ is an L-space \cite{RR, Gillespie}, and
	\item  $K_{\infty}$ is a {\em spherical braid}, it may be isotoped in $S^1 \times S^2$ to be transverse to each of the $S^2$ fibers \cite{NiVafaee, RR}.
\end{enumerate}
\end{remark}

\begin{theorem}
\label{thm:limit}
Let $\{ K_n \}$ be a twist family of knots obtained by twisting $K$ along $c$.
\begin{enumerate}
\item If $K=K_0$ is an L-space knot with L-space surgery slope $r$ and $K_{\infty}$ is an L-space knot, then 
\begin{enumerate}
\item $K_n$ is a positive L-space knot for all $n \geq 0$ if $r > 0$ and
\item $K_n$ is a negative L-space knot for all $n \leq 0$ if $r < 0$.
\end{enumerate}
\item If $K_n$ is an L-space knot for infinitely many integers $n$, 
then $K_{\infty}$ is an L-space knot.
\end{enumerate}
\end{theorem}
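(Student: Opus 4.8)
The plan is to realize every relevant manifold as a Dehn filling of the exterior of the link $K\cup c$ and to run a surgery exact triangle argument one slope at a time. Fix a slope on $K$ and let $M(s)$ be the result of surgering $K$ along that slope and $c$ along the slope $s$; for the fixed slope $r$ of part~(1) this gives $M(\infty)=K(r)$, $M(0)=K_\infty(r_\infty)$, and $M(-1/n)=K_n(r_n)$. The arithmetic I would record first is the order of first homology: writing $r=a/b$ and $\omega=\wind_c(K)$, the linking matrix of $K\cup c$ yields $|H_1(M(p/q))|=|ap-bq\,\omega^2|$, so that (normalizing $a,b>0$, as happens when $r>0$) $|H_1(M(\infty))|=a$, $|H_1(M(0))|=\omega^2 b$, and $|H_1(M(-1/n))|=a+n\omega^2 b$. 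The combinatorial engine is that $\{0,\,-1/(n-1),\,-1/n\}$, with the convention $-1/0:=\infty$, is a triad of slopes on $c$ (pairwise distance one in the Farey graph) and that these orders satisfy $|H_1(M(-1/n))|=|H_1(M(-1/(n-1)))|+|H_1(M(0))|$, identifying $M(-1/n)$ as the distinguished ``sum'' vertex of the triad.

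For part~(1) I would induct on $n\ge 0$. The base case $M(\infty)=K(r)$ is an L-space because $K$ is an L-space knot with surgery slope $r$, and the constant summand $M(0)=K_\infty(r_\infty)$ is an L-space because $r$, hence $r_\infty$, is non-meridional and every non-trivial surgery on the L-space knot $K_\infty$ is an L-space by Remark~\ref{L-space_knot_S1xS2}(1). Remark~\ref{L-space_knot_S1xS2}(2) further makes $K_\infty$ a spherical braid, so it meets every $S^2$ fibre, forcing $\omega>0$; thus each $M(s)$ above is a rational homology sphere and the homological hypothesis below is not vacuous. I would then invoke the Ozsv\'ath--Szab\'o gluing lemma for triads \cite{OS_lens}: if the two non-sum vertices of a triad are L-spaces and the order of homology of the sum vertex equals the sum of the orders of the other two, then the sum vertex is an L-space. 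Feeding the inductive hypothesis $M(-1/(n-1))$ together with $M(0)$ into this lemma shows $M(-1/n)=K_n(r_n)$ is an L-space for all $n\ge 0$. Since $r_n=r+n\omega^2>0$ when $r>0$ and $n\ge 0$, each such surgery exhibits $K_n$ as a \emph{positive} L-space knot; the case $r<0$ and $n\le 0$ is entirely symmetric (replace $-1/n$ by $1/|n|$, or mirror).

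For part~(2) the difficulty is that each L-space knot $K_n$ may a priori use a different surgery slope, so there is no single link exterior to fill, and my first task would be to manufacture a fixed filling. Passing to a subsequence and, if necessary, replacing $\{K_n\}$ by its mirror $\{\overline{K_n}\}$, I may assume $K_n$ is a positive L-space knot for infinitely many $n$. Positive L-space knots are tight fibered, so Corollary~\ref{cor:sqptwist}(2) forces $c$ to link $K$ coherently, and then Theorem~\ref{thm:Murasugi-sum} gives $g(K_n)=\tfrac{\omega(\omega-1)}{2}\,n+O(1)$. Fix a nonzero integer slope $\mu$ on $K$, set $Y_\mu=K(\mu)$, and let $c'\subset Y_\mu$ be the image of $c$. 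Because $K_n$ is a positive L-space knot, $K_n(s)$ is an L-space for every $s\ge 2g(K_n)-1$, while the induced slope is $\mu_n=\mu+n\omega^2$, for which $\mu_n-(2g(K_n)-1)=\omega\,n+O(1)\to\infty$; hence $c'(-1/n)=K_n(\mu_n)$ is an L-space for all large $n$ in the subsequence. These slopes $-1/n$ accumulate at $0$, and $c'(0)=K_\infty(\mu_\infty)$. As $Y_\mu$ is a rational homology sphere, $c'$ is rationally null-homologous and its exterior is a rational homology solid torus, so its set of L-space filling slopes is closed \cite{RR,Gillespie}; therefore the accumulation slope $0$ is also an L-space filling slope. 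Thus $K_\infty(\mu_\infty)$ is an L-space and, $\mu_\infty$ being non-meridional, $K_\infty$ is an L-space knot. (When $c$ is split from or a meridian of $K$ one has $K_n=K$ for all $n$, and these degenerate cases are treated directly.)

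The homology count and the inductive gluing of part~(1) are routine. The genuine obstacle is the limiting step in part~(2). A surgery exact triangle alone does not suffice, because in the triad $\{0,-1/(n-1),-1/n\}$ the accumulation slope $0$ is a \emph{non-sum} vertex, so the gluing lemma propagates the L-space condition away from $0$ (toward large $|n|$) rather than toward it. Reaching the limit therefore genuinely requires the closedness of the L-space region of the rational homology solid torus $Y_\mu\setminus c'$, and the point to verify with care is exactly that this complement meets the hypotheses of \cite{RR,Gillespie} under which the L-space slopes are known to form a closed set.
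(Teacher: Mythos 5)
Your part~(1) is essentially the paper's own proof: the paper packages your triad induction as Lemma~\ref{lem:triad}, with the same homology count $|H_1(K_{n+1}(r_{n+1}))| = |H_1(K_n(r_n))| + |H_1(K_{\infty}(r_{\infty}))|$, the same appeal to \cite[Proposition~2.1]{OS_lens}, and the same use of Remark~\ref{L-space_knot_S1xS2}(1) to make $Y_r(0)=K_\infty(r_\infty)$ the constant L-space summand; that part is correct. Your part~(2) also follows the paper's strategy (Lemma~\ref{lem:knots=>surgeries} together with Theorem~\ref{thm:RRslopes}): fix one slope on $K$, show the slopes $-1/n$ on $c$ are eventually L-space filling slopes, and pass to the limit slope $0$. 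But it has a genuine gap: your reduction ``I may assume $K_n$ is a positive L-space knot for infinitely many $n$'' does not guarantee that those indices tend to $+\infty$, and your key estimate $\mu_n-(2g(K_n)-1)=\omega n+O(1)\to\infty$ uses exactly that. If the positive L-space knots occurred along $n\to-\infty$, then $\mu_n\to-\infty$ while $2g(K_n)-1\to+\infty$, so no induced slope is an L-space slope of $K_n$; mirroring merely trades this for negative L-space knots at $n\to+\infty$, which is equally bad. Excluding this wrong-sign accumulation is a substantive step, not a pigeonhole: the paper does it in the Claim inside Lemma~\ref{lem:knots=>surgeries} by invoking Theorem~\ref{thm:postwistquasipositive} (tight fibered knots, in particular positive L-space knots, occur for all $n\geq N$ and only finitely many $n<N$), whose proof needs the Murasugi-sum structure together with Rudolph's and Gabai's theorems. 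You cite Corollary~\ref{cor:sqptwist}(2) and Theorem~\ref{thm:Murasugi-sum} but never this, so your case analysis is incomplete as written, though it is repaired by one application of Theorem~\ref{thm:postwistquasipositive}.

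The second gap is in your closing step, where you assert that the set of L-space filling slopes of the exterior of $c'$ is closed, citing \cite{RR,Gillespie}, and you flag ``closedness'' as the thing to verify. No hypothesis will deliver that: Theorem~\ref{thm:RRslopes} allows $\mathcal{L}$ to be the complement of the rational longitude, which is \emph{not} closed, so an accumulation point of L-space slopes can be precisely the rational longitude and fail to be an L-space slope. The correct completion, and the one the paper uses, is to observe that infinitely many slopes in $\mathcal{L}$ rule out the empty, single-slope, and (for the limit point) force either $0\in\mathcal{L}$ or $0$ equal to the rational longitude, and then to exclude the latter because $|H_1(Y_\mu(0))|=\omega^2 q<\infty$ (using $\omega\geq 2$ from coherence). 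You have all the ingredients at hand, so this too is a local repair, but as stated the justification is incorrect rather than merely unverified. Finally, ``these degenerate cases are treated directly'' deserves more care than a parenthesis: when $c$ is split from $K$ every surgery on $K_\infty$ has $b_1\geq 1$, so the conclusion of part~(2) can only hold because such configurations are excluded, not because they are easy.
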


Since the unknot is both a positive and a negative L-space knot, we obtain the following corollary from Theorem~\ref{thm:limit} and Theorem~\ref{thm:braidaxis}.  

\begin{corollary}
\label{cor:exchangebraided}
Let $K \cup c$ be a non-trivial link of two unknots.
Let $\{ K_n \}$ be the twist family of knots obtained by twisting $K$ along $c$, and let $\{ c_n \}$ be the twist family of knots obtained by twisting $c$ along $K$.
Then either $\{ K_n \}$ and $\{ c_n \}$ each contain only finitely many L-space knots, or
\begin{enumerate}
\item $c$ is a braid axis for $K$,
\item $K$ is a braid axis for $c$,
\item $K_n$ is an L-space knot for all integers $n$, and
\item $c_n$ is an L-space knot for all integers $n$
\end{enumerate}
\end{corollary}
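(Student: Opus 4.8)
The plan is to assume the first alternative fails and deduce all four numbered conclusions. Negating the first alternative means at least one of $\{K_n\}$, $\{c_n\}$ contains infinitely many L-space knots; by the symmetry of the hypotheses under interchanging $K$ and $c$ we may assume it is $\{K_n\}$. (If $\wrap_c(K)\le 1$ then, the link being non-trivial, $K\cup c$ is the Hopf link and all four conclusions hold at once, so we assume $\wrap_c(K)\ge 2$.)

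First I would obtain conclusion (3). By Theorem~\ref{thm:limit}(2) the limit knot $K_\infty\subset S^1\times S^2$ is an L-space knot. Since $K=K_0$ is unknotted it admits both positive and negative integral L-space (lens space) surgeries, so it is at once a positive and a negative L-space knot. Applying Theorem~\ref{thm:limit}(1) with a positive slope and then with a negative slope shows that $K_n$ is a positive L-space knot for every $n\ge 0$ and a negative L-space knot for every $n\le 0$; hence $K_n$ is an L-space knot for all integers $n$. For conclusion (1), recall that positive L-space knots are tight fibered and that the mirror of a negative L-space knot is a positive one, hence also tight fibered. Thus $K_n$ and $\overline{K_{-n}}$ are both tight fibered for all sufficiently large $n$, and Theorem~\ref{thm:braidaxis} shows $c$ is a braid axis of $K$. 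Consequently $\omega=|\operatorname{lk}(K,c)|$ equals the braid index and is at least $2$, so $\ell:=\operatorname{lk}(K,c)\ne 0$; moreover $\wind_K(c)=|\ell|=\omega\ge 2$ forces $\wrap_K(c)\ge 2$, so $K$ is neither split from nor a meridian of $c$ either.

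The crux is to move the L-space property across to the $c$-family, giving (4) and then (2); I expect this transfer to be the main obstacle, since nothing established so far is visibly symmetric in $K$ and $c$. The device is a surgery common to both limits. Writing $M(a,b)$ for surgery on the link $K\cup c$ with slope $a$ on $K$ and $b$ on $c$, the manifold $M(0,0)$ is simultaneously a surgery on $K_\infty$ (the one corresponding to slope $0$ on $K$) and a surgery on $c_\infty$ (the one corresponding to slope $0$ on $c$). Because $\ell\ne 0$ the framing matrix $\left(\begin{smallmatrix}0&\ell\\ \ell&0\end{smallmatrix}\right)$ is non-degenerate, so $M(0,0)$ is a rational homology sphere and in particular is not the $S^1\times S^2$ filling of either limit knot. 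By Remark~\ref{L-space_knot_S1xS2}(1) the exterior of $K_\infty$ is a generalized solid torus, so every non-trivial surgery on $K_\infty$, and thus $M(0,0)$, is an L-space. Reading $M(0,0)$ instead as the corresponding non-trivial surgery on $c_\infty$ exhibits an L-space surgery, so $c_\infty$ is an L-space knot.

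Finally I would rerun the first two steps with the roles of $K$ and $c$ exchanged. Since $c_0=c$ is unknotted and $c_\infty$ is an L-space knot, Theorem~\ref{thm:limit}(1) yields that $c_n$ is an L-space knot for every integer $n$, which is conclusion (4); the tight-fibered consequence of this, together with Theorem~\ref{thm:braidaxis} (applicable because $\wrap_K(c)\ge 2$), yields that $K$ is a braid axis of $c$, which is conclusion (2). This completes the proof, the one genuinely new ingredient being the shared surgery $M(0,0)$, which breaks the apparent asymmetry between the two twist families and lets the generalized-solid-torus structure of one limit force the L-space property of the other.
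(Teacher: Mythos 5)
Your proposal is correct and takes essentially the same route as the paper's proof: Theorem~\ref{thm:limit}(2) followed by Theorem~\ref{thm:limit}(1) applied to the unknot $K=K_0$ to get conclusion (3), Theorem~\ref{thm:braidaxis} for conclusion (1), the shared surgery $(K\cup c)(0,0)$ together with Remark~\ref{L-space_knot_S1xS2}(1) to show $c_\infty$ is an L-space knot, and then the same two theorems rerun on $\{c_n\}$ for conclusions (4) and (2). Your extra checks---disposing of the Hopf-link/meridian case, noting $M(0,0)$ is a rational homology sphere, and verifying $\wrap_K(c)\ge 2$ so that Theorem~\ref{thm:braidaxis} applies to the $c$-family---are careful refinements of hypotheses the paper leaves implicit, not a different argument.
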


\begin{proof}
Assume $\{K_n\}$, say, contains infinitely many L-space knots.  Then Theorem~\ref{thm:limit}(2) implies that $K_{\infty}$ is an L-space knot in $S^1 \times S^2$. Since $K=K_0$ is an unknot,  Theorem~\ref{thm:limit}(1) then implies that $K_n$ is an L-space knot for all integers $n$ (a positive L-space knot for $n\geq0$ and a negative L-space knot for $n\leq0$) from which  Theorem~\ref{thm:braidaxis} implies  $c$ is a braid axis for $K$.  

Because $K_{\infty}$ is an L-space knot in $S^1 \times S^2$ obtained as the image of $K$ after $0$--surgery on $c$, 
$(K \cup c)(0,0)$ is an L-space; see Remark~\ref{L-space_knot_S1xS2}(1). 
  Thus the knot $c_\infty$, obtained as the image of $c$ after $0$--surgery on $K$, is also an L-space knot in $S^1 \times S^2$.  Since $c=c_0$ is an unknot, applying Theorem~\ref{thm:limit}(1) to the twist family $\{c_n\}$ implies that $c_n$ is an L-space knot for all integers $n$ from which Theorem~\ref{thm:braidaxis} implies  $K$ is a braid axis for $c$. 
\end{proof}

A link of two unknots in which each component is a braid axis for the other is called {\em exchangeably braided} \cite{Morton}.  So Corollary~\ref{cor:exchangebraided} above shows that if a twist family $\{K_n\}$ obtained by twisting an unknot $K=K_0$ about an unknot $c$ contains infinitely many L-space knots, then $K \cup c$ is exchangeably braided. 
For a given integer $\ell > 0$, 
\cite[Corollary~1.2]{Morton} shows that 
there are only a finite number of exchangeably braided links with linking number $\ell$. 
Hence the unknot $K$ has only finitely many twisting circles $c$ (up to isotopy) with $\omega = \ell$ each of which provides 
a twist family $\{ K_n \}$ consisting of L-space knots.
Note that 
(1) for each $\ell \ge 3$, we can take such a twisting circle $c$ so that $\omega = \ell$ and $K \cup c$ is a hyperbolic link,  
and (2) the number of such circles $c$ can be made arbitrarily large for suitably large $\ell$.   
See \cite[Propositions~7.1 and 7.2]{Mote}. 

\begin{question}
\label{exchangeably braided}
If the link of unknots $K \cup c$ is exchangeably braided, then does the twist family $\{K_n\}$ contain infinitely many L-space knots?
\end{question}

\begin{remark}
\label{twisted unknot}
Having an unknot in a twist family is not a requirement for the family to consist of only L-space knots.  
Indeed, many twist families of torus knots contain no unknots. 
One may also readily construct twist families of $1$--bridge braids that do not contain an unknot, 
and these are all L-space knots \cite{GLV}.  
The second author has even constructed a twist family of (mutually distinct) hyperbolic L-space knots with tunnel number two \cite[Theorem 8.1(1)]{Mote}.
\end{remark}

\subsection{Unknotting number}

We develop Theorem~\ref{thm:slice_genus_bound} to gain control on the slice genus of knots in twist families for the proof of Theorem~\ref{limit_omega>=1}.
\begin{theorem}
	\label{thm:slice_genus_bound}
	Let $\{K_n\}$ be a twist family with winding number $\omega$. 
	For $n > 0$ there exist constants $C$ and $C'$ such that 
	\[ C + n\omega (\omega-1) \le 2g_4(K_n) \le C' + n\omega (\omega-1).\]
\end{theorem}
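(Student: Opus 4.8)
The plan is to compare $K_n$ with the member of a \emph{coherent} twist family having the same winding number, for which the genus growth is already understood, and then to control the discrepancy by a cobordism whose genus does not depend on $n$. Write $V = S^3 \setminus N(c)$, a solid torus since $c$ is unknotted, and let $\phi \co V \to V$ be the positive Dehn twist along a meridian disk of $V$ (a disk bounded by $c$); then $K \subset V$ and, up to isotopy, $K_n = \phi^n(K)$. When $\omega = \wind_c(K) = 0$ the genera $g(K_n)$, and hence $g_4(K_n)$, are bounded, so both inequalities hold with $\omega(\omega-1)=0$; thus I will assume $\omega \ge 1$, oriented so that a meridian disk meets $K$ with net sign $+\omega$.

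First I would produce a \emph{coherent reduction} of $K$ inside $V$. Since the winding number is a homological invariant preserved by any oriented cobordism in $V \times I$, and any knot in a solid torus can be braided, there is a compact oriented cobordism $G \subset V \times I$ from $K$ to a closed $\omega$--braid $B \subset V$ (so $\wind = \wrap = \omega$ for $B$), obtained by resolving the $\wrap_c(K)-\omega$ turnbacks by saddle moves; its genus $\gamma$ depends only on $K$ and $c$, not on any twisting. Applying $\phi^n \times \mathrm{id}$ transports $G$ to a cobordism $\phi^n(G) \subset V \times I \subset S^3 \times I$ from $K_n = \phi^n(K)$ to $K^\flat_n := \phi^n(B)$, and since $\phi^n \times \mathrm{id}$ is a homeomorphism the genus of $\phi^n(G)$ is \emph{still} $\gamma$. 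Here $\{K^\flat_n\}$ is exactly the coherent twist family obtained by adding $n$ positive full twists to the $\omega$--braid $B$.

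Next I would pin down $g_4(K^\flat_n)$. As a closed braid, $K^\flat_n$ bounds the Bennequin surface of $B\,\Delta^{2n}$, and the full twist $\Delta^{2n}$ on $\omega$ strands contributes $n\omega(\omega-1)$ crossings; so by Theorem~\ref{thm:Murasugi-sum} (Murasugi summing with the fiber of the $(\omega,(n-n_0)\omega+1)$--torus knot) one gets $2g(K^\flat_n) = n\omega(\omega-1) + O(1)$. For all sufficiently large $n$ these coherent fibered knots are strongly quasipositive, whence $g_4(K^\flat_n) = g(K^\flat_n)$ and therefore $2g_4(K^\flat_n) = n\omega(\omega-1) + O(1)$. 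Finally, because $g_4$ obeys the cobordism inequality $|g_4(J_0)-g_4(J_1)| \le (\text{genus of a connecting cobordism})$, the surface $\phi^n(G)$ of genus $\gamma$ yields $|2g_4(K_n) - 2g_4(K^\flat_n)| \le 2\gamma$, so $2g_4(K_n) = n\omega(\omega-1) + O(1)$; absorbing the finitely many small $n>0$ below the quasipositivity threshold into the constants produces the desired $C$ and $C'$.

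The hard part will be the coherent reduction together with the lower estimate it feeds. Producing a cobordism to a closed braid is routine, but the essential point is that the \emph{same} surface, transported by $\phi^n$ rather than rebuilt for each $n$, continues to realize a genus-$\gamma$ cobordism to $K^\flat_n$ for every $n$, which works precisely because genus is a homeomorphism invariant. The genuinely delicate input is the lower bound for $g_4(K^\flat_n)$: the leading coefficient $\omega(\omega-1)$ is \emph{not} detected by Tristram--Levine signatures, whose growth rate is strictly smaller once $\omega \ge 3$ (compare $|\sigma(T(\omega,m))|$ with $2g(T(\omega,m)) = (\omega-1)(m-1)$). I must therefore invoke strong quasipositivity of the large positive coherent twists to force $g_4 = g$ and read off the sharp slope from the Seifert genus, rather than from any classical signature bound.
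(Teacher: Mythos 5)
Your argument is sound, but it reaches the estimate by a genuinely different route than the paper. The paper proves the two bounds separately and never braids anything: for the upper bound (Proposition~\ref{prop:upperslicebound}) it performs $\eta$ oriented bandings to split $K_n$ into $K_m \sqcup T_{\eta,\ (n-m)\eta}$, then further bandings and caps to reach the coherent torus link $T_{\omega,\ (n-m)\omega}$, whose fiber genus is computed; for the lower bound (Proposition~\ref{prop:newlowerslicebound}) it pushes the same cobordisms through the Beliakova--Wehrli extension of the Rasmussen invariant to links \cite{BW}, using Cavallo's computation $s(T_{\omega,\ m\omega})=(\omega-1)(m\omega-1)$ \cite{Cavallo} and $s(K)\le 2g_4(K)$ \cite{Ras}. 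You instead dispose of the incoherence once and for all: a single cobordism $G$ in $V\times I$ from $K$ to a braided representative $B$, transported by the homeomorphism $\phi^n\times\mathrm{id}$ so that the comparison error $2\gamma$ is independent of $n$, after which the braided family is computed exactly because $\Delta^{2n}\beta$ becomes a positive braid word once $n$ exceeds the number of negative letters of $\beta$, giving strong quasipositivity and hence $g_4=g$ by \cite{Liv}. Your approach buys conceptual economy --- the theorem becomes the statement that $g_4$ of any twist family agrees with that of its braided model up to a constant, and the only concordance-theoretic input is the knot-level slice--Bennequin equality; your remark that signatures cannot see the slope $\omega(\omega-1)$ is exactly the reason the paper also reaches past classical invariants. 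The paper's route buys more in return: its constants are explicit ($C=s(K_0)-2\eta$ and $C'=2g_4(K_0)+\eta-\omega$), and the finer content of Proposition~\ref{prop:newlowerslicebound} --- near-additivity of $s$ along the family --- is reused in Lemma~\ref{lem:sliceequality} to obtain the exact step $g_4(K_{n+1})-g_4(K_n)=\omega(\omega-1)/2$ that drives Theorem~\ref{thm:coherent}, information your $O(1)$ bookkeeping discards. Three small points to nail down in a final write-up: arrange the saddles so that the braided end $B$ is a knot and $G$ is connected (the split circles created when resolving turnbacks can be capped off or fused back by further bands, at bounded cost to $\gamma$); justify the strong quasipositivity of $K^\flat_n$ by the positive-braid observation above rather than by Theorem~\ref{thm:postwistquasipositive}, since the latter depends logically on the present theorem and would be circular --- your text is consistent with the safe route; and note that for a positive braid closure the Bennequin surface is the fiber, so $2g(K^\flat_n)$ is read off directly from the crossing and strand counts, making the appeal to Theorem~\ref{thm:Murasugi-sum} (independent of this theorem, so not circular, but heavier than necessary) avoidable.
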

However, since the unknotting number $u(K)$ of a knot $K$ is bounded below by its slice genus $g_4(K)$, 
this immediately informs us about the growth of unknotting number for twist families of knots with $\omega \geq 2$.

\begin{corollary}
	\label{cor:unknotting}
	Let $\{ K_n \}$ be a twist family of knots obtained by twisting $K$ along $c$ with $\wind_c(K) \geq 2$. 
	Then $u(K_n) \to \infty$ as $n \to \infty$. 
\end{corollary}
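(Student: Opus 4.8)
The plan is to deduce this immediately from the lower bound in Theorem~\ref{thm:slice_genus_bound} together with the elementary fact that the slice genus is dominated by the unknotting number. First I would recall why $g_4(K) \le u(K)$ holds for every knot $K$: an unknotting sequence of $u(K)$ crossing changes can be realized by $u(K)$ band (saddle) moves in $B^4$, producing a smoothly properly embedded surface bounded by $K$ of genus at most $u(K)$, so that $g_4(K) \le u(K)$ for all knots, and in particular $u(K_n) \ge g_4(K_n)$ for each $n$.

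Next, since $\omega = \wind_c(K) \ge 2$ we have $\omega(\omega-1) \ge 2 > 0$. Applying Theorem~\ref{thm:slice_genus_bound}, for $n > 0$ there is a constant $C$ with $2 g_4(K_n) \ge C + n\,\omega(\omega-1)$. As $n \to \infty$ the right-hand side diverges to $+\infty$, whence $g_4(K_n) \to \infty$.

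Finally, chaining the two inequalities yields $u(K_n) \ge g_4(K_n) \ge \tfrac12\bigl(C + n\,\omega(\omega-1)\bigr)$, and the right-hand side tends to infinity, which is the assertion. I do not anticipate any obstacle in this step: all of the genuine content lives in the proof of Theorem~\ref{thm:slice_genus_bound} itself (which controls the growth of $g_4(K_n)$), while the corollary is a one-line consequence once that linear lower bound and the universal inequality $g_4 \le u$ are in hand. The only point worth a moment's care is that the hypothesis $\omega \ge 2$ is exactly what makes the coefficient $\omega(\omega-1)$ strictly positive, so that the bound actually grows; for $\omega \in \{0,1\}$ the term vanishes and no such conclusion is available.
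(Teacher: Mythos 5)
Your proposal is correct and follows essentially the same route as the paper: both invoke the lower bound $C + n\omega(\omega-1) \le 2g_4(K_n)$ from Theorem~\ref{thm:slice_genus_bound}, note that $\omega \ge 2$ makes the coefficient positive so $g_4(K_n) \to \infty$, and conclude via the standard inequality $g_4(K_n) \le u(K_n)$. The only difference is cosmetic: you additionally sketch why $g_4 \le u$ holds, which the paper simply cites as a well-known fact.
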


\begin{proof}
	It follows from Theorem~\ref{thm:slice_genus_bound} that 
	$C + n \omega (\omega-1) \leq 2g_4(K_n)$ for some constant $C$. 
	Hence $g_4(K_n)$ tends to $\infty$ as $n \to \infty$. 
	Since the unknotting number $u(K)$ of a knot $K$ is bounded below by the slice genus $g_4(K)$, 
	we have the desired conclusion. 
\end{proof}

In contrast, there are twist families of knots with $\omega =0$ and others with $\omega =1$ for which the unknotting number is bounded.
	
\begin{example}	
For the twist family of knots $\{K_n\}$  obtained from the Whitehead link $K \cup c$, one readily sees that $u(K_0)=0$ while $u(K_n)=1$ otherwise.
\end{example}

\begin{example}
\label{wind1_unknotting1}
Let us consider a knot $K$ and a twisting circle as depicted on the left of Figure~\ref{twist_unknotting1} 
with $\wind_c(K) = 1$.  
Performing crossing change at the crossing indicated by $*$ on the right of the figure, 
we obtain the trivial knot for any integer $n$. 
Hence $u(K_n) \leq 1$ for all integers $n$.

\end{example}

\begin{figure}[h]
	\begin{center}
		\includegraphics[width=0.47\linewidth]{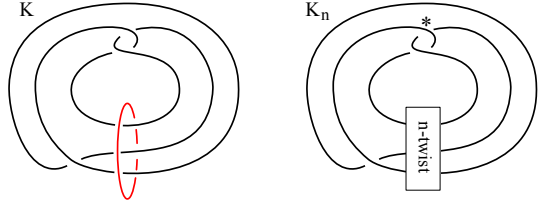}
		\caption{$K_n$ has unknotting number at one for all integers $n$.}
		\label{twist_unknotting1}
	\end{center}
\end{figure}

\subsection{Organization and Notation of the paper}
We assemble constraints on the Seifert and slice genera of knots in twist families and then prove Theorem~\ref{thm:slicegenera_seifertgenera} in Section~\ref{Seifert/slice}.  
In Section~\ref{coherent family} we refine our understanding of this for coherent twist families and prove Theorems~\ref{thm:coherent} and \ref{thm:Murasugi-sum}.  
Then we pull these together in Section~\ref{examples} to prove Theorem~\ref{thm:values}. 

In Section~\ref{tight_fibered_family} we examine twist families of tight fibered knots and build to a proof of Theorem~\ref{thm:braidaxis} 
that characterizes braid axes.   
Since L-space knots are tight fibered knots, this leads us to the discussion in Section~\ref{L-space_knot_family} of L-space knots in twist families and our contribution to the study of satellite L-space knots in Section~\ref{satellite}.

\medskip

Throughout the paper we will use $N(*)$ to denote a tubular neighborhood of $*$ and 
use $\nbhd(*)$ to denote the interior of $N(*)$ for notational simplicity. 
For a twist family $\{K_n\}$ obtained by twisting a knot $K$ along an unknot $c$, 
we will often use the abbreviations $\omega = \wind_c(K)=|lk(K,c)|$ for the winding number of $K$ about $c$ 
and $\eta = \wrap_c(K)$ for the wrapping number of $K$ about $c$.

\bigskip

\noindent
\textbf{Acknowledgments.}\quad 
KB would like to thank Jeremy Van-Horn-Morris for discussions that lead to Proposition~\ref{prop:upperslicebound} and Sarah Rasmussen for conversations about satellite L-space knots. 
KB and KM would like to thank Liam Watson for updates on the status his work with J.\ Hanselman and J.\ Rasmussen on Conjecture~\ref{L-slopes}.

\section{Seifert genus and slice genus of knots in twist families}
\label{Seifert/slice}

The goal in this section is to establish Theorem~\ref{thm:slicegenera_seifertgenera}. 
We will prove the theorem under slightly stronger conditions on the signs of $n$;
though, by considering the mirrored twist family $\{ \overline{K_n} \} = \{ \overline{K}_{-n}\}$ of $\{ K_n \}$, 
we obtain the result stated in Section~\ref{section:Introduction}. 

\begin{thm:slicegenera_seifertgenera} 
Let $\{ K_n \}$ be a twist family of knots obtained by twisting along $c$. 
Suppose that we have one of the following: 
\begin{enumerate}
\item 
There exists a constant $C$ such that $g(K_n)-g_4(K_n) \leq C$ for infinitely many integers $n > 0$. 
\item
$g(K_n) / g_4(K_n) \to 1$ as $n \to \infty$.
\end{enumerate}
Then either $\wind_c(K)= 0$ or $c$ links $K$ coherently. 
\end{thm:slicegenera_seifertgenera}

Among disks bounded by $c$ in $S^3$, 
let $\hat{D}$ be one for which $\eta = |K \cap \hat{D}|>0$ is minimized, 
i.e.\  $\eta$ is the wrapping number of $K$ about $c$ and the intersection number of $K$ with $\hat{D}$ realizes $\wrap_c(K)$. 
Observe that $\eta \geq \omega$ and $c$ bounds a disk that $K$ links coherently exactly when $\eta = \omega$.  
More explicitly, $K$ and $\hat{D}$ may be oriented so that $K$ has $\omega+d$ positive and $d$ negative intersections with $\hat{D}$ where $2d=\eta-\omega$.

Let $E = S^3-\nbhd(K \cup c)$ be the exterior of the link $K \cup c$ and set $D = \hat{D} \cap E$.  
Denote the Thurston norm on $H_2(E, \bdry E)$ by $x$ \cite{ThurstonNorm}. 
Since $\hat{D}$ is intersected $\eta$ times by $K$, $x([D]) \leq \eta - 1$. 
On the other hand, since $\partial [D] = [\partial D] = \omega \mu_K + \lambda_c$
(where $\mu_K$ is a meridian of $K$ and $\lambda_c$ is a preferred longitude of $c$), 
we have $\omega-1 \leq x([D])$. 
Thus $\omega-1 \leq x([D]) \leq \eta - 1$ unless $c$ is split from $K$ so that $D$ is an unpunctured disk.

\begin{theorem}[Theorem~5.3 \cite{BT}]
\label{thm:twistgenera}
There is a constant $G=G(K,c)$ such that $2g(K_n) = 2G + n \omega x([D])$ for sufficiently large $n >0$. 
\end{theorem}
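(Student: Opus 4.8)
The plan is to establish the linear growth formula $2g(K_n) = 2G + n\omega\, x([D])$ for large $n$ by understanding how Seifert surfaces for $K_n$ arise from the twisting operation, and then pinning down both the leading coefficient and the additive constant via the Thurston norm. First I would set up the relationship between Seifert surfaces for $K_n$ and properly embedded surfaces in the link exterior $E = S^3 - \nbhd(K \cup c)$. The twisting operation is $(-1/n)$--surgery on $c$; a Seifert surface for $K_n$ corresponds (after capping off) to a surface in $E$ whose boundary on $\bdry N(c)$ is the curve realizing $(-1/n)$--filling, namely a slope of the form $-n\mu_c + \lambda_c$ (with appropriate sign conventions), together with the longitudinal boundary $\lambda_{K_n}$ on $\bdry N(K)$. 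The key homological input is that the class $[D] \in H_2(E,\bdry E)$ detected by the minimal twisting disk governs the Thurston-norm behavior under Dehn filling.

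The main mechanism I would invoke is the behavior of the Thurston norm under Dehn filling, in the spirit of Gabai's theorem that for most filling slopes the norm (equivalently the genus, since these surfaces are connected with one boundary component on $K_n$) is additive/linear. Concretely, the relevant homology class for $K_n$ in the filled manifold is the image of a class $[S_n] \in H_2(E, \bdry E)$ whose boundary is $\lambda_{K_n}$ on $\bdry N(K)$ and whose intersection with the $(-1/n)$--filling slope on $\bdry N(c)$ is controlled. Writing $[S_n] = [S_0] + n\,[D]$ in $H_2(E,\bdry E)$ (up to adding multiples of the fiber class $[D]$ to absorb the $n$ twists), linearity of the Thurston norm on rays gives $x([S_n]) = x([S_0]) + n\, x([D])$ once $n$ is large enough that $[S_n]$ lies in the cone over a single top-dimensional face of the norm ball. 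Translating $x$ back to genus via $x = 2g - 1$ (for a once-punctured connected surface) produces the stated formula, with the leading term carrying a factor of $\omega$ because the boundary slope $\lambda_{K_n}$ winds $\omega$ times relative to the disk class $[D]$ — this is exactly where the winding number enters the coefficient.

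The hard part will be making the linearity genuinely kick in for all sufficiently large $n$ and identifying the precise coefficient $\omega$ rather than some other multiple. The subtlety is that the Thurston norm is only piecewise linear: it is linear on cones over faces of the norm ball, so I must verify that the classes $[S_n]$ eventually enter and stay within the cone over a single face. This requires knowing that the ray generated by $[D]$ passes through the interior of a top face (or along a face), which is where the minimality of the disk $\hat D$ (so that $[D]$ is a norm-minimizing class with $x([D]) \le \eta - 1$) is used. I would also need to rule out degenerations — for instance surfaces becoming compressible or disconnected after filling — and to handle the genus-versus-norm bookkeeping carefully, since capping off the $\bdry N(c)$ boundary after filling can change Euler characteristic. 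Since this theorem is attributed to \cite{BT} (their Theorem~5.3), the intended proof presumably carries out precisely this Thurston-norm-under-filling analysis, and I would defer the technical face-identification argument to that reference while sketching the homological and norm-linearity skeleton above.
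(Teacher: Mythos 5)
The paper contains no proof of this statement: it is imported verbatim as Theorem~5.3 of \cite{BT}, so the ``official'' argument is exactly the Thurston-norm-under-Dehn-filling analysis you outline and defer to, and your sketch matches its structure (restrict Seifert surfaces for $K_n$ to $E$, identify their classes as a ray in $H_2(E,\bdry E)$ translated by multiples of $[D]$, and invoke eventual linearity of $x$ on the cone over a single face of the norm ball, with the incompressibility-after-filling issues being the genuine content of \cite{BT}). Two small corrections to your bookkeeping, which cancel in your final formula: the boundary curves of a restricted Seifert surface on $\bdry N(c)$ have slope $-1/n$, i.e.\ are of the form $-\mu_c + n\lambda_c$ rather than $-n\mu_c+\lambda_c$, and the homological identity is $[S_n]=[S_0]+n\omega[D]$ --- the factor $\omega$ entering because $[\lambda_K]=\omega[\mu_c]$ in $H_1(E)$ forces $\omega$ boundary curves on $\bdry N(c)$, so the restricted surface has $\omega+1$ boundary components and satisfies $x = 2g-1+\omega$ rather than $2g-1$ --- after which the stated coefficient $n\omega\,x([D])$ and the constant $G$ come out as you claim.
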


Here we obtain a similar estimate on the slice genus of all knots in the twist family. 
Recall that $g_4(K)$ denotes the slice genus ($4$--ball genus) of a knot $K$. 
Similarly, for an oriented link $L$, 
let $g_4(L)$ denote the slice genus of $L$. 

\begin{thm:slice_genus_bound}
For $n > 0$ there exist constants $C$ and $C'$ such that 
\[ C + n\omega (\omega-1) \le 2g_4(K_n) \le C' + n\omega (\omega-1).\]
\end{thm:slice_genus_bound}

To prove Theorem~\ref{thm:slice_genus_bound} 
we will prove Propositions~\ref{prop:upperslicebound} and \ref{prop:newlowerslicebound} below,  
which give an upper bound and 
a lower bound of $g_4(K_n)$, respectively.

\begin{proposition}
\label{prop:upperslicebound}
For each pair of integers $m, n$, 
 \[2|g_4(K_n)-g_4(K_m)| \leq  |n-m| \omega(\omega-1) + (\eta-\omega).\]  
In particular, setting $2G_4 = 2g_4(K_0)+ \eta-\omega$, 
we have 
\[ 2g_4(K_n) \leq 2G_4 + |n| \omega(\omega-1).\]
\end{proposition}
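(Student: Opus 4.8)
The plan is to bound the slice genus by building an explicit cobordism in $S^3 \times I$ and capping it off. The key elementary fact is the following: if $C \subset S^3 \times I$ is a connected oriented cobordism from $K_m$ to $K_n$, then gluing $C$ to a minimal genus slice surface $\Sigma_m \subset B^4$ for $K_m$ along $K_m$ produces a slice surface $\Sigma_n$ for $K_n$ with $\chi(\Sigma_n) = \chi(\Sigma_m) + \chi(C)$, whence $2g_4(K_n) \le 2g_4(K_m) - \chi(C)$. Reversing $C$ (turning the cobordism upside down leaves $\chi$ unchanged) gives the same inequality with $m$ and $n$ interchanged, so $2|g_4(K_n)-g_4(K_m)| \le -\chi(C)$. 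Thus it suffices to produce, for each $m,n$, a connected cobordism $C$ with $-\chi(C) = |n-m|\,\omega(\omega-1) + (\eta-\omega)$; the ``in particular'' statement then follows by taking $m=0$ and absorbing the one-time term $\eta-\omega$ into $2G_4 = 2g_4(K_0) + \eta - \omega$.

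To construct $C$ I would realize the $|n-m|$ additional full twists on the $\eta$ strands of $K$ running through $\hat{D}$ as a sequence of oriented saddle (band) moves, and then count critical points via $-\chi = (\#\text{saddles}) - (\#\text{births}) - (\#\text{deaths})$. The cost is organized into two pieces. First, a \emph{one-time reduction}: orient $K$ so that $\hat{D}$ meets it in $\omega+d$ positive and $d$ negative points with $2d = \eta-\omega$; using $\eta-\omega$ oriented saddles pairing the $d$ anti-parallel strands off against neighboring strands, I convert the twisting region into one carrying $\omega$ \emph{coherently} oriented strands. Second, the \emph{per-twist cost}: a single positive full twist on $\omega$ coherent strands is the central pure braid element, which decomposes as the product of the $\binom{\omega}{2}$ pairwise generators, each adding one clasp between a pair of coherent strands. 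Each such pairwise clasp is realized by a genus-one (two-saddle) cobordism contributing $-\chi = 2$, so one full twist on $\omega$ coherent strands costs $2\binom{\omega}{2} = \omega(\omega-1)$, and $|n-m|$ full twists cost $|n-m|\,\omega(\omega-1)$. Summing the two pieces yields the desired value of $-\chi(C)$.

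The step I expect to be the main obstacle is the bookkeeping that isolates the coherent per-twist cost $\omega(\omega-1)$ from the contribution of the $2d$ anti-parallel strands, i.e.\ verifying that the anti-parallel intersections genuinely cost only the single additive term $\eta-\omega$ rather than a cost proportional to $|n-m|$. This requires arranging the saddle moves so that, after the one-time reduction, the full twists act only on the $\omega$ coherent strands while the resolved anti-parallel arcs pass through the twist region as cancelling turnbacks that contribute no genus per twist; matching the constant exactly to $\eta-\omega = 2d$ is the delicate point. A secondary, routine point is maintaining connectivity of $\Sigma_n$ (and hence the validity of the capping inequality), which can be arranged by choosing the bands appropriately or by discarding closed components produced by deaths, neither of which increases $-\chi(C)$ beyond the stated bound.
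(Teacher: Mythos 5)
Your proposal is correct and arrives at exactly the paper's bound, but by a genuinely different mechanism. The paper also builds an oriented cobordism realizing $-\chi = |n-m|\,\omega(\omega-1)+(\eta-\omega)$, but it does so by using $\eta$ oriented bandings to split $K_n$ into $K_m \sqcup T_{\eta,\,(n-m)\eta}$, then absorbing the $d$ anti-parallel torus-link components with $d$ disjoint annuli and capping the remaining $\omega$ coherent components with the fiber surface of $T_{\omega,\,(n-m)\omega}$, whose genus (quoting Cavallo) supplies the per-twist term; the symmetric inequality $2|g_4(K_n)-g_4(K_m)|\leq\cdots$ is then obtained by swapping the roles of $m$ and $n$, just as you do by reversing the cobordism. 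You instead keep a knot-to-knot cobordism in $S^3\times I$, getting the per-twist cost from the pure-braid factorization of the full twist into $\binom{\omega}{2}$ clasps, each undone by one crossing change of local cost $-\chi=2$, and the one-time cost from turnback saddles on the anti-parallel pairs; this is more elementary (no torus-link fiber genus needed, only the braid identity and the standard crossing-change cobordism), while the paper's split-link construction has the side benefit that the explicit cobordism to $K_m \sqcup T_{\eta,\,(n-m)\eta}$ is reused later (in Proposition~\ref{prop:newlowerslicebound} for the Rasmussen-invariant lower bound, and again in the proof of Theorem~\ref{thm:braidaxis}). One bookkeeping caution on your reduction step, which is indeed the delicate point you flagged: if you spend all $\eta-\omega$ reduction saddles up front, the twist-removal terminates at the \emph{rerouted} knot rather than at $K_m$, since the saddles change the connectivity of the strands through $\hat{D}$ and not merely their position. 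The budget must be split as $d$ saddles before the crossing changes (each creating a retractable turnback finger that isotopes out of the twist region) and $d$ mirror saddles afterwards to restore $K_m$; this still totals $\eta-\omega=2d$ and keeps the cobordism connected with no births or deaths, so your final count and the ``in particular'' specialization $2G_4 = 2g_4(K_0)+\eta-\omega$ with $m=0$ go through as stated.
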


\begin{proof} 
Choose an orientation on $K$ and thus on the knots $K_n$.  
View $K_n$ as the result of $(n - m)$ twists of $K_m$ about $c$.
Then a sequence of $\eta$ oriented bandings separates $K_n$ into the split link $K_m \sqcup T_{\eta,\ (n-m) \eta}$ as indicated in Figure~\ref{cobordism_twist}.  
Since each oriented banding may be viewed as giving a saddle surface in $S^3 \times [0, 1]$, 
this shows that $K_n$ is cobordant to $K_m \sqcup T_{\eta,\ (n-m) \eta}$ by a planar surface $P$ with $\eta+2$ boundary components. 

The torus link $T_{\eta,\ (n-m) \eta}$ has the orientation induced from $K_n$ by the bandings; namely, 
$\omega+d$ components run in one direction while $d$ components run in the other, 
where $2d = \eta-\omega$.   
Thus there are $d$ pairwise disjoint annuli each of which is cobounded by a pair of components of 
$T_{\eta,\ (n-m) \eta}$ with opposite orientations.  
Take a band on each annulus which connects boundary components, 
and apply a sequence of oriented bandings to separate $T_{\eta,\ (n-m) \eta}$ into the split link 
$T_{\omega,\ (n-m) \omega} \sqcup [d\textrm{--component trivial link}]$. 
Each oriented banding corresponds to a saddle surface in $S^3 \times [0, 1]$, 
which gives a cobordism from $T_{\eta,\ (n-m) \eta}$ to the split union of the coherently oriented torus link $T_{\omega,\ (n-m) \omega}$ and $d$--component trivial link. 
Then capping off the $d$--component trivial link by disks, 
we obtain a cobordism from $T_{\eta,\ (n-m) \eta}$ to the coherently oriented torus link $T_{\omega,\ (n-m) \omega}$. 
See Figure~\ref{cobordism_twist}. 
Let $F_{\omega,\ (n-m) \omega}$ be the fiber surface of $T_{\omega,\ (n-m) \omega}$. 
Then following  \cite[\S4.2]{Cavallo} we have: 

\[g_4(T_{\omega,\ (n-m) \omega}) 
= g(F_{\omega,\ (n-m) \omega}) 
= \frac{(\omega-1)(|n-m| \omega -1) + 1 - \omega}{2} 
= \frac{|n-m|\omega(\omega-1)}{2} + 1-\omega. \]

Also let $S_m$ be a slice surface for $K_m$ such that $g(S_m) = g_4(K_m)$.  
Then we construct a surface $S_{n,\,m}$ bounded by $K_n$ by attaching 
$S_m$,\ $F_{\omega,\ (n-m) \omega}$, 
and $d$ annuli to $P$ as in Figure~\ref{cobordism_twist}.  
Since $F_{\omega,\ (n-m) \omega}$ is a connected surface with $\omega$ boundary components, 
one readily calculates that 
\begin{align*}
g(S_{n,\, m}) &= g(S_m) + \left( g(F_{\omega,\ (n-m)\omega}) + (\omega-1) \right) + d \\
& = g_4(K_m) + \frac{|n-m|\omega(\omega-1)}{2} + d. 
\end{align*}  
\begin{figure}
\begin{center}
\includegraphics[width=1.0\textwidth]{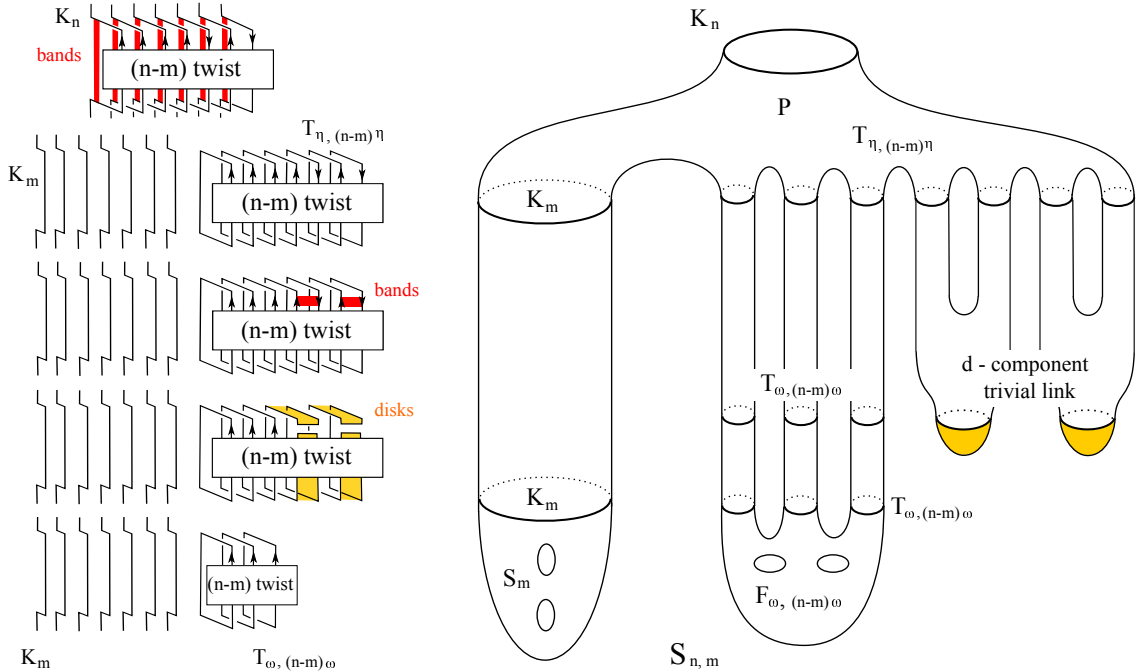}
\caption{$\omega = 3, \eta = 7, d = 2$}
\label{cobordism_twist}
\end{center}
\end{figure}
So then  
\[
2g_4(K_n) \leq 2g(S_{n,\, m}) = 2g_4(K_m) + |n-m|\omega(\omega-1) + 2d.  
\] 
Viewing $K_m$ as the result of $m-n$ twists on $K_n$ about $c$, 
the above argument also shows that 
\[
2g_4(K_m) \leq 2g_4(K_n) + |m-n|\omega(\omega-1) + 2d.  
\] 
The claimed result follows from these two inequalities. 
\end{proof}

\begin{question}
\label{question_w=0}
By Proposition~\ref{prop:upperslicebound}, 
for a twist family $\{K_n\}$ with $\omega \le 1$, 
the sequence $\{g_4(K_n)\}$ is bounded. 
Must $g_4(K_n)$ be constant except for finitely many $n\in\Z$?
\end{question}

Using the Rasmussen invariant of knots  \cite{Ras}, 
we now obtain a lower bound on $g_4(K_n)$ analogous to the upper bound obtained in Proposition~\ref{prop:upperslicebound}.
Recalling that Beliakova and Wehrli \cite{BW} extended the Rasmussen invariant of knots \cite{Ras} to oriented links, 
let $s(L)$ denote the Rasmussen invariant of an oriented link $L$. 

\begin{proposition}
\label{prop:newlowerslicebound}
Let $\{K_n\}$ be a twist family with winding number $\omega$ and wrapping number $\eta$.
For integers $n \geq m$,
\[ (n-m) \omega (\omega-1) - 2\eta \leq s(K_n) - s(K_m).\]
Consequently, for $n \geq 0$
 \[ s(K_0)  - 2\eta + n \omega (\omega-1) \leq 2g_4(K_n).\]
\end{proposition}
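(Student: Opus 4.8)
The plan is to run the cobordism from the proof of Proposition~\ref{prop:upperslicebound} essentially in reverse, but to feed it into the cobordism inequality for the Rasmussen invariant rather than into a genus count. The tools I would invoke are: (i) the Beliakova--Wehrli extension of $s$ to oriented links \cite{BW}; (ii) the cobordism inequality for $s$, namely that an oriented cobordism $\Sigma \subset S^3 \times [0,1]$ from $L_0$ to $L_1$ with no closed components satisfies $s(L_1) - s(L_0) \le -\chi(\Sigma)$, equivalently $s(L_1) \ge s(L_0) + \chi(\Sigma)$ \cite{BW, Ras}; (iii) additivity under split union, $s(L \sqcup L') = s(L) + s(L') - 1$, so that the $d$-component unlink has $s = 1-d$ and $s(\text{unknot}) = 0$; and (iv) the slice-genus bound $s(K) \le 2g_4(K)$ for knots. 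The one computed value I will need is $s$ of the coherently oriented positive torus link $T_{\omega,(n-m)\omega}$: as a positive braid closure its $s$ equals $1$ minus the Euler characteristic of its fiber surface, giving $s(T_{\omega,(n-m)\omega}) = (n-m)\,\omega(\omega-1) - \omega + 1$.

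Next I would assemble the cobordism, assuming $n>m$ (the case $n=m$ being trivial). Taking exactly the $\eta$ oriented bandings that split $K_n$ into $K_m \sqcup T_{\eta,(n-m)\eta}$, followed by the $d$ oriented bandings that split $T_{\eta,(n-m)\eta}$ into $T_{\omega,(n-m)\omega} \sqcup (d\text{-component unlink})$, and \emph{stopping before the $d$ disks are capped on}, I would obtain an oriented cobordism $\Sigma$ from $K_n$ to $L_1 := K_m \sqcup T_{\omega,(n-m)\omega} \sqcup (d\text{-component unlink})$. By construction $\Sigma$ consists only of $\eta + d$ saddles, with no births, deaths, or closed components, so $\chi(\Sigma) = -(\eta+d)$.

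I would then apply the cobordism inequality with incoming end $L_0 = K_n$, obtaining $s(K_n) \ge s(L_1) + \chi(\Sigma) = s(L_1) - (\eta + d)$. Expanding $s(L_1)$ by split additivity over the $2+d$ pieces gives $s(L_1) = s(K_m) + s(T_{\omega,(n-m)\omega}) - (d+1)$, and inserting the torus-link value together with $2d = \eta - \omega$ should collapse the constants exactly to $s(K_n) - s(K_m) \ge (n-m)\,\omega(\omega-1) - 2\eta$, which is the first claim. For the stated consequence I would set $m=0$ and combine with $s(K_n) \le 2g_4(K_n)$ to conclude $2g_4(K_n) \ge s(K_n) \ge s(K_0) - 2\eta + n\,\omega(\omega-1)$.

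The work here is bookkeeping rather than conceptual, and that is where I expect the care to go: getting the direction of the cobordism inequality right (it must be applied with $K_n$ as the incoming end so that the large positive contribution of the torus link lands on the correct side of the inequality), confirming that every banding is orientation-compatible so that $\Sigma$ is genuinely an oriented cobordism with no closed components, and pinning down the normalization constants (the $-1$ per split union and the exact value of $s$ on the positive torus link). The one genuinely structural input is that the torus link produced is \emph{coherently} oriented, so that $s(T_{\omega,(n-m)\omega})$ is the large positive-braid value $(n-m)\,\omega(\omega-1)-\omega+1$ rather than the much smaller, and harder to control, invariant of a non-coherently oriented torus link; this is precisely what the $d$ extra bandings are there to arrange.
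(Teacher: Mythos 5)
Your proposal is correct and is essentially the paper's own proof: you use the identical banding cobordism (first $\eta$ saddles taking $K_n$ to $K_m \sqcup T_{\eta,\,(n-m)\eta}$, then $d$ more saddles to the coherently oriented $T_{\omega,\,(n-m)\omega} \sqcup U_d$), the same inputs $s(L \sqcup L') = s(L)+s(L')-1$, $s(U_d)=1-d$, $s(T_{\omega,\,(n-m)\omega})=(\omega-1)\bigl((n-m)\omega-1\bigr)$, and $s(K) \le 2g_4(K)$, the only cosmetic difference being that you compose the two stages into a single connected saddle cobordism (keeping $U_d$ uncapped) and apply the cobordism inequality once, whereas the paper applies the two-sided inequality \cite[(7.1)]{BW} twice, passing through an intermediate lower bound on $s(T_{\eta,\,(n-m)\eta})$. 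One small caution: ``no closed components'' is not by itself the correct hypothesis for the one-sided bound you quote (a cobordism containing a death disk satisfies it but violates the inequality in your direction); the condition actually needed is that every component of $\Sigma$ has boundary on the relevant end, which your $\Sigma$ does satisfy since it is connected and built from saddles only.
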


\begin{proof}
Set $d$ so that $2d = \eta-\omega$.
As constructed in Proposition~\ref{prop:upperslicebound} there is a connected planar surface $P$ giving a cobordism from 
$K_n$ to $K_m \sqcup T_{\eta,\ (n-m) \eta}$ 
where $T_{\eta,\ (n-m) \eta}$ has $d$ components running in one direction and $\omega + d$ running in the other. 
Since $|\bdry P| = \eta+2$, $-\chi(P) = \eta$.
Therefore \cite[(7.1)]{BW} gives
\[ |s(K_m \sqcup T_{\eta,\ (n-m)\eta}) - s(K_n)| \leq \eta.\]
Since $s(L_1 \sqcup L_2) = s(L_1) + s(L_2) -1$ by \cite[(7.2)]{BW}, we obtain
\[ | s(K_m) - s(K_n) +s(T_{\eta,\ (n-m)\eta}) - 1 | \leq \eta \]
and hence 
\[s(T_{\eta,\ (n-m)\eta}) - 1-\eta \leq s(K_n)-s(K_m). \tag{$\star$} \]
Now $d$ bandings of pairs of anti-parallel components of $T_{\eta,\ (n-m) \eta}$ produces a weak cobordism $A$ 
(composed of straight annuli and $d$ pairs of pants) to the split link $T_{\omega,\ (n-m) \omega} \sqcup U_d$ with $-\chi(A) = d$.  
(Here $U_d$ is the $d$--component trivial link.) 
Hence, using  \cite[(7.1) \& (7.2)]{BW} again along with the computations $s(U_d) = 1-d$ of \cite[\S 7.1]{BW} and
$s(T_{\omega,\ (n-m)\omega}) = (\omega-1)((n-m)\omega-1)$ of \cite[\S4.2]{Cavallo}, 
we obtain
 \begin{align*}
| s(T_{\omega,\ (n-m)\omega} \sqcup U_d) - s(T_{\eta,\ (n-m)\eta}) | & \leq d \\
| s(T_{\omega,\ (n-m)\omega}) + s(U_d)-1 - s(T_{\eta,\ (n-m)\eta}) | & \leq d \\
| (\omega-1)\bigl((n-m)\omega-1\bigr) + (1-d) - 1 - s(T_{\eta,\ (n-m)\eta}) | & \leq d
\end{align*}
so that
\begin{align*}
(\omega-1)\bigl((n-m)\omega-1\bigr) - 2d &\leq s(T_{\eta,\ (n-m)\eta})\\
(n-m)\omega(\omega-1)+1 - \eta &\leq s(T_{\eta,\ (n-m)\eta}).
\end{align*}
Therefore, with ($\star$) we obtain
 \[(n-m)\omega(\omega-1) -2\eta \leq s(K_n)-s(K_m)\]
as claimed. 
Put $m = 0$ and use $s(K_n) \leq 2g_4(K_n)$ \cite[Theorem~1]{Ras} 
to obtain the second conclusion.
\end{proof}

\medskip

\begin{proof}[Proof of Theorem~\ref{thm:slice_genus_bound}]
When $n > 0$, 
Propositions~\ref{prop:upperslicebound} and \ref{prop:newlowerslicebound} give
\[ s(K_0)  - 2\eta + n\omega (\omega-1) \leq 2g_4(K_n) \leq 2G_4 + n \omega(\omega-1)\]
Putting $C = s(K_0)  - 2\eta$ and $C' = 2G_4$, 
we obtain the desired result. 
\end{proof}

Combining Theorem~\ref{thm:twistgenera} and \ref{thm:slice_genus_bound}, 
we obtain: 

\begin{theorem}
\label{limit_omega>=1}
Suppose that $\omega = \wind_c(K) \ge 1$. 
Then we have:
\begin{enumerate}
\item
If $\omega = 1$ but $\eta>1$, 
then $\displaystyle \lim_{n\to \infty} \frac{g(K_n)}{g_4(K_n)} = \infty$. 
\item
If $\omega > 1$, 
then $\displaystyle \lim_{n\to \infty} \frac{g(K_n)}{g_4(K_n)} =  \frac{x([D])}{\omega-1}$.
\end{enumerate}
\end{theorem}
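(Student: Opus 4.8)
The plan is to treat both conclusions as direct consequences of the two asymptotic formulas already in hand. Theorem~\ref{thm:twistgenera} gives $2g(K_n) = 2G + n\omega\, x([D])$ for all sufficiently large $n>0$, while Theorem~\ref{thm:slice_genus_bound} pins $2g_4(K_n)$ between $C + n\omega(\omega-1)$ and $C' + n\omega(\omega-1)$. For large $n$ every quantity in sight is positive, so the ratio $g(K_n)/g_4(K_n) = 2g(K_n)/2g_4(K_n)$ can be squeezed between two explicit rational functions of $n$ and its limit read off by comparing leading terms.

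For assertion (2), where $\omega > 1$, the lower bound $\omega - 1 \le x([D])$ recorded before Theorem~\ref{thm:twistgenera} gives $x([D]) \ge \omega - 1 \ge 1 > 0$, so both the numerator $2g(K_n) \sim n\omega\, x([D])$ and the denominator $2g_4(K_n) \sim n\omega(\omega-1)$ grow linearly. Substituting the bounds on $2g_4(K_n)$ yields
\[
\frac{2G + n\omega\, x([D])}{C' + n\omega(\omega-1)} \;\le\; \frac{g(K_n)}{g_4(K_n)} \;\le\; \frac{2G + n\omega\, x([D])}{C + n\omega(\omega-1)},
\]
and letting $n \to \infty$ both outer terms tend to $\tfrac{\omega\, x([D])}{\omega(\omega-1)} = \tfrac{x([D])}{\omega-1}$, so the squeeze theorem finishes this case.

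For assertion (1), where $\omega = 1$ but $\eta > 1$, the denominator degenerates: $\omega(\omega-1) = 0$ forces $C \le 2g_4(K_n) \le C'$, so $g_4(K_n)$ stays bounded. The numerator is $2g(K_n) = 2G + n\, x([D])$, so to conclude $g(K_n)/g_4(K_n) \to \infty$ it suffices to show $x([D]) > 0$; then $g(K_n) \to \infty$ against a bounded denominator gives the limit $\infty$ (using the convention $N/0 = \infty$ for the finitely many $n$, if any, with $g_4(K_n) = 0$).

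The main obstacle is precisely this positivity $x([D]) > 0$ when $\omega = 1$ and $\eta > 1$, since the recorded estimate only gives $x([D]) \ge \omega - 1 = 0$. I would argue it by contradiction from a taut representative of $[D]$. If $x([D]) = 0$, a Thurston-norm-minimizing surface $S$ representing $[D]$ consists of annuli together with closed components; its boundary is $\mu_K + \lambda_c$, so on $\partial N(K)$ the boundary curves are parallel copies of the meridian $\mu_K$ with net count one (hence an odd number of them), and likewise on $\partial N(c)$ they are copies of $\lambda_c$ with net count one. A parity count then forces at least one annulus component to run from $\partial N(K)$ to $\partial N(c)$, giving an annulus in the exterior $E$ with boundary $\mu_K \cup \lambda_c$. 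Such an annulus isotopes $c \simeq \lambda_c$ to the meridian $\mu_K$ within $S^3 \setminus K$; capping $\mu_K$ with a meridian disk of $K$ then exhibits a disk bounded by $c$ meeting $K$ once, so $\eta = \wrap_c(K) \le 1$, contradicting $\eta > 1$. Hence $x([D]) \ge 1$, completing assertion (1). I expect this annulus/parity step to be the only genuinely topological point; the rest is the elementary asymptotics above.
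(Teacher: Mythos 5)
Your proof is correct and takes essentially the same route as the paper: assertion (2) is the identical squeeze of $g(K_n)/g_4(K_n)$ between the formula of Theorem~\ref{thm:twistgenera} and the two-sided bound of Theorem~\ref{thm:slice_genus_bound}, and assertion (1) is the same bounded-denominator, unbounded-numerator argument. The one point where you go beyond the paper is making explicit the positivity $x([D])\ge 1$ when $\omega=1$ and $\eta>1$ via the parity/annulus argument, which the paper leaves implicit in its appeal to Theorem~\ref{thm:twistgenera}; your argument for it is sound (one should also note that a norm-zero representative could a priori contain disk components, but a disk with boundary $\mu_K$ would yield a sphere meeting $K$ once and a disk with boundary $\lambda_c$ would split $c$ from $K$, so both are impossible and your parity count does produce the required annulus).
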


\begin{proof}
(1) 
Under the assumption Theorem~\ref{thm:twistgenera} 
shows $g(K_n) \to \infty$ as $n \to \infty$,  
while Proposition~\ref{prop:upperslicebound} shows that $g_4(K_n)$ is bounded above. 
This gives the desired result. 

(2) For suitably large integers $n$, 
Theorem~\ref{thm:twistgenera} shows that $2g(K_n) = 2G + n \omega x([D])$ for some constant $G$.
When $n > 0$, 
Theorem~\ref{thm:slice_genus_bound} gives
\[ C + n\omega (\omega-1) \leq 2g_4(K_n) \leq C' + n \omega(\omega-1)\]
where the left hand side is positive for suitably large $n$.   
Hence, for such large $n$, 
we have
\[\frac{2G + n \omega x([D])}{C' + n \omega(\omega-1)} \leq \frac{g(K_n)}{g_4(K_n)} \leq  \frac{2G + n \omega x([D])}{C + n\omega (\omega-1)}.\]
Assuming $\omega >1$ and taking the limit as $n \to \infty$, 
we then obtain
\[\frac{x([D])}{\omega-1} \leq \lim_{n\to \infty} \frac{g(K_n)}{g_4(K_n)} \leq  \frac{x([D])}{\omega-1}\]
 which yields the desired result.
\end{proof}

Now we are ready to establish Theorem~\ref{thm:slicegenera_seifertgenera}. 

\noindent
\begin{proof}[Proof of Theorem~\ref{thm:slicegenera_seifertgenera}]
We assume that $\omega = \wind_c(K) \geq 1$ and aim to show that 
$c$ links $K$ coherently under each of the hypotheses (1) and (2).  
Let $S$ be a properly embedded surface in $E = S^3 - \nbhd(K \cup c)$ with coherently oriented boundary that both represents $[D]$ 
and realizes $x([D])$.  (Recall $D = \hat{D}\cap E$.)
Since $[S] = [D] \in H_2(E, \partial E)$, 
$\partial [S] = \partial [D] \in H_1(\partial E)$ and 
$[\partial S] = [\partial D] = \omega \mu_K + \lambda_c$, where $\mu_K$ is a meridian of $K$ and $\lambda_c$ is a preferred longitude of $c$. 
Hence $\bdry S$ consists of $\omega$ curves in $\partial N(K)$ representing meridians of $K$ 
and $1$ curve in $\partial N(c)$ representing the preferred longitude of $c$.   
Consequently, $x([D]) = 2g(S) + \omega - 1$ and if $g(S) = 0$ then $S$ caps off to a disk $\hat{S}$ in $S^3$ bounded by $c$ that $K$ intersects coherently.   
So in the following we show that $g(S) =0$ follows from each of the hypotheses (1) and (2).

First, recall from Theorem~\ref{thm:twistgenera} and Proposition~\ref{prop:upperslicebound} that 
\begin{itemize}
\item
$2g(K_n) = 2G + n\omega x([D]) = 2G  + 2n\omega g(S) + n\omega(\omega-1)$\ for sufficiently large $n$, and 
\smallskip
\item
$2g_4(K_n) \le 2G_4 + |n|\omega(\omega-1)$ where we set $G_4 = g_4(K_0) + \frac{(\eta-\omega)}{2}$.
\end{itemize}
Note that we may assume $G_4>0$: if $G_4=0$ then $\eta-\omega=0$ and thus $c$ links $K$ coherently already.
\medskip

{\bf Case (1).}  
Assume there is a constant $C$ such that $g(K_n)-g_4(K_n) \leq C$ for infinitely many integers $n>0$.

For each sufficiently large $n>0$ such that $g(K_n) - g_4(K_n) \leq C$, 
we have  
\[G - G_4 + ng(S) \leq G - G_4 + n \omega g(S) \leq g(K_n) - g_4(K_n) \leq C\] 
where the first inequality is due to the assumption that $\omega \geq 1$.   
Since neither $C$, $G_4$, nor $G$ depend on $n$, 
the inequality $G - G_4 + ng(S) \leq C$ can only hold true for infinitely many positive integers $n$ if $g(S) = 0$.  

\medskip

{\bf Case (2).}  
Assume $g(K_n)/g_4(K_n) \to 1$ as $n \to \infty$. 

If $\omega = 1$, 
then Theorem~\ref{limit_omega>=1}(1) shows $\eta = 1$ or otherwise 
$\displaystyle \lim_{n\to \infty} \frac{g(K_n)}{g_4(K_n)} = \infty$. 
Hence $\eta = 1$, and $c$ links $K$ coherently exactly once. 

So we may now assume $\omega > 1$.  
Then by Theorem~\ref{limit_omega>=1}(2) 
\[ 1=\lim_{n\to \infty} \dfrac{g(K_n)}{g_4(K_n)} 
= \frac{x([D])}{\omega-1} 
= \frac{2g(S) + \omega -1}{\omega -1}
= \frac{2g(S)}{\omega-1} + 1.\]
Thus we must have $g(S) = 0$.
\end{proof}

\section{Coherent twist families of knots}
\label{coherent family}

Recall that a twist family is coherent if $\omega = \eta$; 
that is, $c$ bounds a disk which $K$ always intersects with the same orientation.   
The slice genus and the Seifert genus of a coherent twist family behave in a similar fashion asymptotically. 

\begin{thm:coherent}
Let $\{ K_n \}$ be a twist family of knots obtained by twisting $K$ along $c$. 
	
If $c$ links $K$ coherently at least twice, then both
\begin{enumerate}
\item there exists constants $C_+,C_- \geq 0$ such that $g(K_n)-g_4(K_n) = C_\pm$ for sufficiently large integers $\pm n$, 
and hence
\item $g(K_n) / g_4(K_n) \to 1$ as $|n| \to \infty$.
\end{enumerate}
\end{thm:coherent}

To prove this theorem, we need the following lemma. 
\begin{lemma}
\label{lem:sliceequality}
Let $\{K_n\}$ be a twist family in which $c$ links $K$ coherently. 
Then 
\[g_4(K_{n+1}) -g_4(K_n) = \frac{\omega(\omega-1)}{2} \]
for sufficiently large integers $n$. 
\end{lemma}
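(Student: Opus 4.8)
The plan is to combine the matching upper and lower bounds already established and then exploit integrality. Since the family is coherent we have $\eta = \omega$, so Proposition~\ref{prop:upperslicebound} applied to the consecutive pair $m=n$, $n+1$ reads $2\,|g_4(K_{n+1})-g_4(K_n)| \le \omega(\omega-1)$ for every $n$; in particular
\[ 2g_4(K_{n+1}) - 2g_4(K_n) \le \omega(\omega-1) \]
for all $n$. I would record this as an inequality valid across the whole family, with no largeness hypothesis on $n$.

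Next I would extract a lower bound on the growth from Proposition~\ref{prop:newlowerslicebound}, which gives $2g_4(K_n) \ge s(K_0) - 2\eta + n\,\omega(\omega-1)$ for $n \ge 0$. Writing $a_n = 2g_4(K_n)$ (a nonnegative even integer, since $\omega(\omega-1)$ is a product of consecutive integers and hence even), this says that the shifted sequence $b_n := a_n - n\,\omega(\omega-1)$ is bounded below by the constant $s(K_0)-2\eta$ for all $n \ge 0$.

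The key observation is that the upper bound from the first paragraph rewrites exactly as $b_{n+1} \le b_n$, so $\{b_n\}_{n\ge 0}$ is non-increasing. A non-increasing sequence that is bounded below must converge; but each $b_n$ is an even integer, so convergence forces $\{b_n\}$ to be eventually constant. Hence there is an $n_0$ with $b_{n+1}=b_n$ for all $n \ge n_0$, which is precisely $a_{n+1}-a_n = \omega(\omega-1)$, i.e.
\[ g_4(K_{n+1}) - g_4(K_n) = \frac{\omega(\omega-1)}{2} \]
for all sufficiently large $n$, as claimed.

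The crux — and the only place the argument could go wrong — is the passage from \emph{non-increasing and bounded below} to \emph{eventually constant}: this step genuinely needs the values to be integers, since a priori the slice-genus differences could oscillate within the allowed band $[-\omega(\omega-1)/2,\ \omega(\omega-1)/2]$ forever. Integrality of $g_4$ (equivalently $a_n \in 2\Z_{\ge 0}$) is exactly what rules this out. I expect the remaining bookkeeping — verifying that $\omega(\omega-1)$ is even and that both cited propositions specialize correctly under $\eta=\omega$ — to be routine.
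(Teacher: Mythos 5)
Your proof is correct and is essentially the paper's argument in different packaging: both combine Proposition~\ref{prop:upperslicebound} (with $\eta=\omega$) and Proposition~\ref{prop:newlowerslicebound} and exploit integrality, the paper by bounding the number of $n$ for which the consecutive-difference inequality is strict, you by observing that $b_n = 2g_4(K_n) - n\omega(\omega-1)$ is a non-increasing, bounded-below integer sequence, hence eventually constant --- the same mechanism. One cosmetic slip: $2g_4(K_n)$ is even simply because $g_4(K_n)\in\Z$, not because $\omega(\omega-1)$ is even; the evenness of $\omega(\omega-1)$ is instead what makes $b_n$ an integer and the claimed difference $\omega(\omega-1)/2$ an integer, which your argument does use correctly.
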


\begin{proof}
Since the twist family is coherent, Proposition~\ref{prop:upperslicebound} shows that 
\[g_4(K_{n+1}) -g_4(K_n) \leq \frac{\omega(\omega-1)}{2}  \tag{$*$}\] 
for all integers $n$. 
Observe that if ($*$) is a strict inequality for $k$ of the integers $n$ between $0$ and a positive integer $N$, 
then we have that 
\[g_4(K_N) \leq g_4(K_0) + \frac{N \omega(\omega-1)}{2} - k.\]
However, Proposition~\ref{prop:newlowerslicebound} 
(with $\eta = \omega$) shows that we must also have 
\[\frac{s(K_0)}{2} + \frac{N \omega(\omega-1)}{2} - \omega \leq g_4(K_N).\]
Together these two inequalities imply that 
\[k \leq g_4(K_0)-\frac{s(K_0)}{2} + \omega,\]
a bound independent of $N$. 
Hence there may be only at most $g_4(K_0)-\frac{s(K_0)}{2} + \omega$ integers for which ($*$) is a strict inequality.  
Thus ($*$) is an equality for sufficiently large $n$ and our conclusion holds.
\end{proof}

\begin{proof}[Proof of Theorem~\ref{thm:coherent}]
Since $c$ links $K$ coherently, we have $x([D]) = \omega - 1$ in Theorem~\ref{thm:twistgenera} so that for some constant $G$
\[g(K_n) = G+\frac{n\omega(\omega-1)}{2}\] 
for sufficiently large $n$.  
On the other hand, 
Lemma~\ref{lem:sliceequality} shows that 
\[g_4(K_{n+1}) -g_4(K_n) = \frac{\omega(\omega-1)}{2}\] 
for sufficiently large $n$.  
Therefore there is an integer $n_0$ such that for all $n \ge n_0$, 
\[g_4(K_n) 
= g_4(K_{n_0}) + \frac{(n-n_0)\omega(\omega-1)}{2} 
= g_4(K_{n_0}) - \frac{n_0\omega(\omega-1)}{2} + \frac{n\omega(\omega-1)}{2} 
= G' + \frac{n\omega(\omega-1)}{2},\]  
where $G'$ is the constant $g_4(K_{n_0}) - \frac{n_0\omega(\omega-1)}{2}$.  
Hence 
$g(K_n) - g_4(K_n) = G - G'$ for sufficiently large $n$.  
Putting  $C_+ = G - G'$ then gives half of the desired conclusion (1). 

Since the same argument applies to the mirrored twist family $\{\overline{K_n}\}$ where $\overline{K_n} = \overline{K}_{-n}$, 
we similarly obtain the constant $C_-$ and the other half of conclusion (1). 

It remains to see that $g(K_n) / g_4(K_n) \to 1$ as $|n| \to \infty$.  
Since $c$ links $K$ coherently, 
$\wind_c(K) \geq 2$ and $g(K_n) \to \infty$ as $|n| \to \infty$; 
see \cite[Theorem~2.1]{BM} or Theorem~\ref{thm:twistgenera}. 
Hence $g_4(K_n) = g(K_n) - C_\pm$ also tends to $\infty$ when $\pm n \to \infty$. 
Thus $\frac{g(K_n)}{g_4(K_n)} = \frac{g_4(K_n) + C_\pm}{g_4(K_n)} = 1 + \frac{C_\pm}{g_4(K_n)} \to 1$ as $\pm n \to \infty$ 
from which conclusion (2) follows. 
\end{proof}

In general there are integral sequences $a_n  \geq b_n$ for which $a_n/b_n \to 1$ as $n \to \infty$ and yet $a_n-b_n \to \infty$.
Corollary~\ref{cor:2=>1} below shows that this is not the case for the Seifert and slice genera of knots in a twist family.

\begin{corollary}
\label{cor:2=>1}
If $g(K_n)/g_4(K_n) \to 1$ as $n \to \infty$, 
then there is a constant $C$ such that $g(K_n) \le g_4(K_n) + C$ for all $n \ge 0$.  
\end{corollary}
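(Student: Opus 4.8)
The plan is to feed the hypothesis straight into Theorem~\ref{thm:slicegenera_seifertgenera} and then exploit the structural dichotomy it produces. Since $g(K_n)/g_4(K_n) \to 1$ as $n \to \infty$ is exactly hypothesis~(2) of that theorem, I conclude immediately that either $\wind_c(K) = 0$ or $c$ links $K$ coherently. I would treat these two alternatives separately and, in each, establish a uniform upper bound on the nonnegative quantity $g(K_n) - g_4(K_n)$ for $n \ge 0$; producing such a bound is precisely the content of the corollary.

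If $\omega = \wind_c(K) = 0$, then the Seifert genus $g(K_n)$ is bounded above across the whole twist family, a standard feature of winding number zero twistings already noted in the introduction. As $0 \le g(K_n) - g_4(K_n) \le g(K_n)$, that same upper bound serves as the constant $C$, and the case is complete. If instead $c$ links $K$ coherently but does so at most once, then $c$ is either a meridian of $K$ or split from $K$, so $K_n = K$ for every $n$; here $g(K_n) - g_4(K_n) = g(K) - g_4(K)$ is literally constant and one simply takes $C = g(K) - g_4(K)$.

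The remaining case is when $c$ links $K$ coherently at least twice, and this is where Theorem~\ref{thm:coherent} does the real work: its conclusion~(1) furnishes a constant $C_+ \ge 0$ and an integer $n_0$ with $g(K_n) - g_4(K_n) = C_+$ for all $n \ge n_0$. To upgrade this equality-for-large-$n$ into a bound valid for all $n \ge 0$, I would absorb the finitely many remaining terms into the constant, setting $C = \max\{C_+,\, g(K_0)-g_4(K_0),\, \dots,\, g(K_{n_0-1})-g_4(K_{n_0-1})\}$; each listed difference is finite and nonnegative since $g_4 \le g$, so $C$ is well defined and bounds $g(K_n) - g_4(K_n)$ for every $n \ge 0$.

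I do not anticipate a genuine obstacle here: all the analytic force has already been spent in Theorems~\ref{thm:slicegenera_seifertgenera} and \ref{thm:coherent}, and what remains is a clean case split. The one point requiring care is the final case, where the hypotheses yield information only for sufficiently large $n$; the mild subtlety is to observe that the omitted initial segment is finite, so that its (nonnegative) differences can be folded into $C$ without disturbing the conclusion.
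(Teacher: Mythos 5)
Your proposal is correct and follows essentially the same route as the paper's proof: invoke Theorem~\ref{thm:slicegenera_seifertgenera}(2) to get the dichotomy, dispatch the $\omega=0$ and meridian/split cases directly, and apply Theorem~\ref{thm:coherent}(1) in the coherent case with $\eta \ge 2$. Your explicit absorption of the finitely many initial differences into the constant $C$ is a detail the paper leaves implicit, but it is exactly the intended reading.
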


\begin{proof}
By the assumption Theorem~\ref{thm:slicegenera_seifertgenera} shows that $\wind_c(K) = 0$ or $c$ links $K$ coherently. 
In the former case, $g_4(K_n) \le g(K_n) \le C$ for some constant $C$. 
Thus we have  the conclusion. 
So suppose that the latter case happens.
If $\eta = 1$, then $c$ is a meridian of $K$ and 
$K_n = K$ for all integers $n$, 
hence $g(K_n) - g_4(K_n) = g(K) - g_4(K)$, 
a constant $C$. 
If $\eta \ge 2$, 
then Theorem~\ref{thm:coherent} gives the desired result.
\end{proof}

\medskip

Furthermore, 
when $c$ links $K$ coherently, 
we can describe a minimal genus Seifert surface of $K_n$ explicitly for sufficiently large $n$ as follows.

\begin{thm:Murasugi-sum}
Let $\{K_n\}$ be a twist family in which $c$ links $K$ coherently $\omega>0$ times. 
Then there is an integer $n_0$ such that for every integer $n > n_0$, the knot
$K_n$ has a minimal genus Seifert surface that may be obtained as a Murasugi sum of a minimal genus Seifert surface for $K_{n_0}$ 
and the fiber surface for the $(\omega, (n-n_0) \omega + 1)$--torus knot. 
\end{thm:Murasugi-sum}

Before proving this theorem, 
for convenience, 
we introduce a variant of Murasugi sums. 
In Figure~\ref{fig:Murasugi_sum_variant-braid}, 
on the left is the usual Murasugi sum along a $2n$--gon (shown with $n=3$) together with a distorted version.  
On the right is a variant of the Murasugi sum and a similarly distorted version where the vertices of the $2n$--gon are expanded to arcs.  The two versions of the Murasugi sum produce isotopic results. 
(These distorted versions are shown to indicate how the variant is used in Figure~\ref{fig:twisting_Murasugi_sum} after further isotopy.) 

\begin{figure}[h]
\begin{center}
\includegraphics[width=\textwidth]{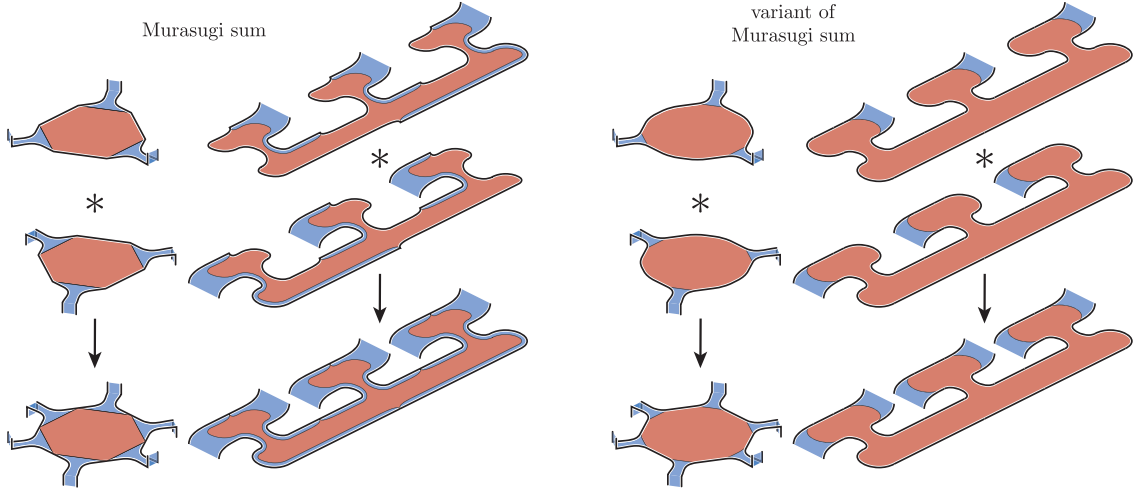}
\caption{Left: the standard Murasugi sum along a $2n$--gon.  Right: a variant of the Murasugi sum where the vertices of the $2n$--gon have been expanded to arcs in the boundary of the surfaces.} 
\label{fig:Murasugi_sum_variant-braid}
\end{center}
\end{figure}

\begin{proof}
First observe that if $K$ were a torus knot (i.e.\ a $0$--bridge braid) in the solid torus complement of $c$ or just the core of that solid torus, 
then the theorem is satisfied.  
Hence from here on we assume the exterior of $K \cup c$ is neither a product of a torus and an interval nor a cable space.

Let $\hat{D}$ be a disk that $c$ bounds and $K$ intersects coherently $\omega >0$ times, 
and let $D$ be the restriction of $\hat{D}$ to the exterior of $K \cup c$.  
Because $K$ intersects $\hat{D}$ coherently, we have that $x([D])=-\chi(D) = \omega-1$; 
see the proof of Theorem~\ref{thm:slicegenera_seifertgenera}. 
It then follows from Theorem~\ref{thm:twistgenera} that there  are constants $N_0$ and $G$ such that 
$2g(K_{N_0+k}) = 2G + (N_0 + k) \omega  (\omega-1)$ for every integer $k \ge  0$.
In particular, 
this implies the equality
\[g(K_{N_0 + k}) = g(K_{N_0}) + \frac{k\omega(\omega-1)}{2} \tag{$\dagger$} \] 
for every integer $k\geq0$.


Observe that for each integer $n$, 
the restriction of any Seifert surface for $K_n$ to the exterior of $K \cup c$ meets $\bdry N(c)$ in curves of slope $-1/n$.
Therefore \cite[Theorem 4.6]{BT} implies we may choose $N_0$ large enough 
so that $K_{N_0}$ has a minimal genus Seifert surface $\hat{F}_{N_0}$ that $c$ intersects coherently $\omega$ times.  
We may then isotope the Seifert surface $\hat{F}_{N_0}$ around the twisting circle $c$ as in Figure~\ref{fig:band_twist}.

Let $\hat{F}_{N_0 + 1}$ be a Seifert surface of $K_{N_0 + 1}$ obtained from $\hat{F}_{N_0}$ by adding $\omega(\omega-1)$ 
twisted bands as shown in Figure~\ref{fig:band_twist} (where $\omega = 3$), 
which is isotoped to a surface given in the rightmost picture of Figure~\ref{fig:band_twist}.
Then the equality $\chi(\hat{F}_{N_0 + 1}) = \chi(\hat{F}_{N_0}) - \omega(\omega-1)$ 
and the equality ($\dagger$) with $k = 1$ imply
\[g(\hat{F}_{N_0 + 1}) = g(\hat{F}_{N_0}) + \frac{\omega(\omega-1)}{2}  
= g(K_{N_0}) + \frac{\omega(\omega-1)}{2} = g(K_{N_0 + 1}). \]
This means that $\hat{F}_{N_0 + 1}$ is a minimal genus Seifert surface of $K_{N_0 + 1}$. 

\begin{figure}[h]
\begin{center}
\includegraphics[width=0.5\textwidth]{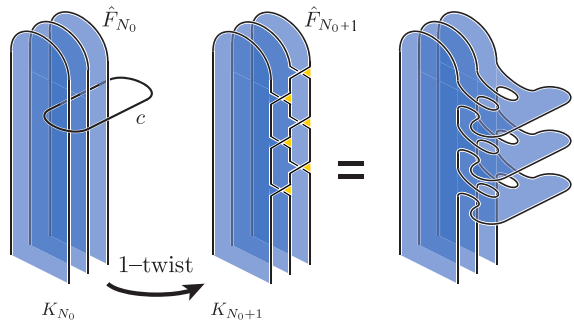}
\caption{The surface $\hat{F}_{N_0 + 1}$ is obtained from $\hat{F}_{N_0}$ by adding $\omega(\omega-1)$ twisted bands; shown here with $\omega = 3$.} 
\label{fig:band_twist}
\end{center}
\end{figure}

Now we follow the argument in the proof of \cite[Lemma~3.4]{BEVhM}.  
We put $n_0 = N_0 + 1$ and prove that 
if $n > n_0$, 
then a minimal genus Seifert surface of $K_n$ is obtained as a Murasugi sum of a minimal genus Seifert surface for $K_{n_0}$ and the fiber surface for the $(\omega, (n-n_0) \omega + 1)$--torus knot. 

\begin{figure}[h]
\begin{center}
\includegraphics[width=1.0\textwidth]{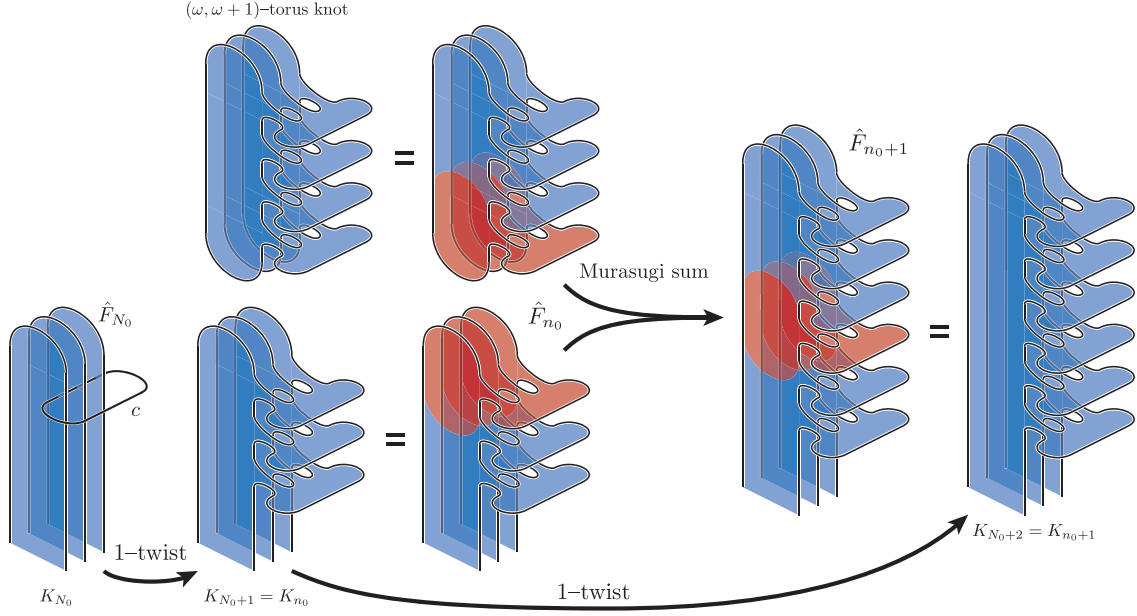}
\caption{A Murasugi sum with a fiber of a $(\omega,\omega+1)$--torus knot produces a twist; shown here with $\omega = 3$.}
\label{fig:twisting_Murasugi_sum}
\end{center}
\end{figure}

Let us take the two shaded disks; 
one is in $\hat{F}_{n_0} = \hat{F}_{N_0 + 1}$ 
and the other is in the minimal genus Seifert surface (the fiber surface) of the 
$(\omega, (n-n_0)\omega + 1)$--torus knot (as shown on the middle picture of Figure~\ref{fig:twisting_Murasugi_sum}). 
Then apply the Murasugi sum along these two disks as in Figure~\ref{fig:twisting_Murasugi_sum} to obtain 
$\hat{F}_{n}$ and $K_{n}$ as shown in Figure~\ref{fig:twisting_Murasugi_sum}, 
where $\omega = 3$ and $n = n_0 + 1$. 
Then recalling 
$g(\hat{F}_{n_0}) = g(K_{n_0})$ and $g(K_n) = g(K_{n_0}) + \frac{(n-n_0)\omega(\omega-1)}{2}$ 
(because $n_0 = N_0 + 1 > N_0$ and $n > n_0$), 
we have the equality 
\[
g(\hat{F}_{n})  = g(\hat{F}_{n_0}) + \frac{(n-n_0)\omega(\omega-1)}{2}
= g(K_{n_0}) + \frac{(n-n_0)\omega(\omega-1)}{2} 
= g(K_n). 
\]
This means that $\hat{F}_{n}$ is a minimal genus Seifert surface of $K_{n}$ and 
completes a proof. 
\end{proof}


\medskip

\section{Values of the limit of $g(K_n) / g_4(K_n)$}
\label{examples}

The purpose in this section is to understand the possible values of 
$\lim_{n \to \infty} \frac{g( K_n )}{g_4(K_n )}$ for a twist family $\{K_n\}$. When $\omega>0$, 
Theorem~\ref{thm:values} shows the limit exists and determines the range of values.  
When $\omega=0$, 
Proposition~\ref{prop:w=0} demonstrates that this limit may be any positive number that is at least $1$ or $\infty$ 
and Question~\ref{question_w=0} asks if this limit always defined.

\begin{thm:values}\
\begin{enumerate}
\item
For any twist family $\{K_n\}$ with $\omega > 0$, 
$\displaystyle \lim_{n \to \infty} \frac{g( K_n )}{g_4(K_n )} = r$ where $1 \le r \in \mathbb{Q} \cup \{\infty\}$.
\item
For any $1 \le r \in \mathbb{Q} \cup \{\infty\}$, 
there exists a twist family $\{ K_n \}$ with $\omega>0$ which satisfies 
$\displaystyle \lim_{n \to \infty} \frac{g( K_n )}{g_4(K_n )} = r$. 
\end{enumerate}
\end{thm:values}

\begin{proof}[Proof of Theorem~\ref{thm:values}]
First assume $\omega > 0$. 
It follows from Theorem~\ref{limit_omega>=1}
that $g(K_n)/g_4(K_n)$ converges to a rational number $r = x([D])/(\omega -1)$ when $\omega > 1$, 
otherwise it tends to $\infty$. 
Since $g(K_n) \ge g_4(K_n)$ for all integers $n$ for any twist family $\{K_n\}$, 
$r \ge 1$. 
This establishes the first assertion.
The second assertion follows from Theorem~\ref{thm:coherent} when $r = 1$, 
Theorem~\ref{limit_omega>=1}(1) when $r = \infty$, 
and Theorem~\ref{thm:proportion} below when $1 < r < \infty$.
\end{proof}

For a twist family with $\omega = \wind_c(K) >1$, 
observe that we necessarily have $x([D]) \geq \omega-1$ (where $D$ is the  disk bounded by $c$ and punctured by $K$).  
In Theorem~\ref{thm:proportion} we demonstrate that  $x([D])$ and $\omega-1$ are otherwise independent, 
that $\frac{x([D])}{\omega-1}$ may be any rational number greater than or equal to $1$. 

\begin{theorem}
\label{thm:proportion}
For any $1 <  r \in \mathbb{Q}$, 
there exists a twist family of knots $\{ K_n \}$ with $\omega>0$ which satisfies 
$\displaystyle \lim_{n \to \infty} \frac{g( K_n )}{g_4(K_n )} = r$. 
\end{theorem}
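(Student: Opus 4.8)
The plan is to realize every prescribed value through Theorem~\ref{limit_omega>=1}(2): once $\omega = \wind_c(K) > 1$, the limit $\lim_{n\to\infty} g(K_n)/g_4(K_n)$ equals $x([D])/(\omega-1)$, so it suffices to construct, for each rational $r = p/q > 1$ (in lowest terms, so $p > q \ge 1$), a link $K \cup c$ with $c$ unknotted, $\omega > 1$, and $x([D])/(\omega-1) = r$. Recall from the proof of Theorem~\ref{thm:slicegenera_seifertgenera} that $x([D]) = 2g(S) + \omega - 1$, where $S$ is a norm-minimizing surface in $E = S^3 - \nbhd(K\cup c)$ with coherently oriented boundary $\omega\mu_K + \lambda_c$, so $x([D]) = -\chi(S)$. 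I will therefore aim for
\[ \omega - 1 = 2q, \qquad x([D]) = -\chi(S) = 2p, \]
which forces $g(S) = p - q \ge 1$ and yields ratio exactly $2p/2q = r$. This reduces the theorem to building, in the solid torus $V = S^3 - \nbhd(c)$, a knot $K$ of winding number $\omega = 2q+1$ whose punctured-meridian-disk class $[D]$ is represented by a norm-minimizing surface of Euler characteristic $-2p$.

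The construction I propose begins with the coherent model and adds complexity by plumbing. Start with $K$ a coherent closed $\omega$--braid in $V$, so that $c$ is a braid axis; here the punctured meridian disk $D_0$ fibers $E$ and realizes $x([D]) = \omega - 1 = 2q$ with $g(S)=0$. Now Murasugi-sum (plumb) onto $D_0$ a sequence of $2(p-q)$ Hopf-band fibers, performing each plumbing along a square meeting the strands of $K$ rather than in a ball split from $c$; each plumbing lowers $\chi$ by one, altering $K$ within $V$ while leaving $c$ and the winding number $\omega$ fixed and preserving the boundary class $[D]$. Because a Murasugi sum of fiber surfaces is again a fiber surface (Gabai), the resulting surface $\hat S$ fibers the exterior of the new link $K' \cup c$; because fibers minimize the Thurston norm (Thurston), we conclude $x([D]) = -\chi(\hat S) = 2q + 2(p-q) = 2p$ exactly. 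Feeding $\omega - 1 = 2q$ and $x([D]) = 2p$ into Theorem~\ref{limit_omega>=1}(2) gives $\lim_{n\to\infty} g(K'_n)/g_4(K'_n) = r$, as desired.

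The main obstacle is control of the Thurston norm. An explicitly drawn spanning surface of the right genus would only supply the upper bound $x([D]) \le 2p$, while the homological estimate $\omega-1 \le x([D])$ is far too weak to pin the value down; indeed, naive genus added in a ball split from $c$ is invisible to $[D]$. This is exactly why the construction is routed through fiberedness: plumbing guarantees via Gabai that $\hat S$ is a fiber, and fibers are norm-minimizing, so the lower bound $x([D]) \ge 2p$ comes for free and non-coherence (a wrapping number $\eta > \omega$) is forced automatically. The remaining work is bookkeeping on the plumbing pattern: one must check that each Hopf-band plumbing keeps $c$ the original unknot, keeps the algebraic winding number equal to $\omega = 2q+1$, and leaves the final boundary a single knot $K'$ with $[\partial \hat S] = \omega\mu_{K'} + \lambda_c$. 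These are elementary but necessary verifications, and once made the Euler-characteristic count together with Theorem~\ref{limit_omega>=1}(2) completes the proof. When $p-q$ is large one may instead plumb fibers of $(2,m)$--torus links, or of the torus-link fibers appearing in Theorem~\ref{thm:Murasugi-sum}, to raise $-\chi$ by several units at a time.
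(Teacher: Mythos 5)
Your reduction of the theorem to realizing $\omega - 1 = 2q$ and $x([D]) = 2p$ via Theorem~\ref{limit_omega>=1}(2) matches the framing the paper itself gives just before Theorem~\ref{thm:proportion}, but the realization mechanism you propose is structurally impossible, and the impossibility sits exactly at the step you flag as ``bookkeeping'': the claim that plumbing Hopf bands onto $D_0$ leaves the boundary class $[D]$ intact. Suppose you had succeeded: the exterior $E'$ of the new link $K' \cup c$ would fiber over $S^1$ with fiber $\hat{S}$ whose boundary consists of $\omega$ meridians of $K'$ together with $\lambda_c$. Since the boundary curves on $\bdry N(K')$ are meridians, the fibration extends across $N(K')$ by meridian disks, yielding a fibration of the solid torus $V = S^3 - \nbhd(c)$ over $S^1$ whose fiber is $\hat{S}$ capped off with $\omega$ disks, a connected surface with single boundary curve $\lambda_c$ (the meridian of $V$). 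But the only surface bundle structure on a solid torus has disk fiber: from $1 \to \pi_1(F) \to \pi_1(V) \to \Z \to 1$ with $\pi_1(V) \cong \Z$, the fiber is simply connected. Hence $\hat{S}$ is forced to be a planar $\omega$--punctured disk, $K'$ is a closed braid with axis $c$, and $x([D]) = \omega - 1$, giving ratio $1$ rather than $r > 1$. In other words, fiberedness of the class $[D]$ is \emph{equivalent} to $c$ being a braid axis, so routing the lower bound on $x([D])$ through fiberedness is self-defeating: the very property you invoke to certify $x([D]) = 2p$ forces $x([D]) = \omega - 1$. What actually happens under your plumbings is that the Murasugi sum changes the boundary link: by Gabai, $\hat{S}$ is indeed a fiber surface, but for its \emph{new} boundary link, which is no longer of the form ($\omega$ meridians of a knot) $\cup$ ($\lambda_c$); you cannot simultaneously raise $-\chi$ within the class and stay in the class.

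The paper avoids certifying a Thurston norm for any exotic link altogether, and its route shows what a repair would require. It first constructs explicit winding number one twist families $\{K^m_n\}$ of ribbon knots with $g_4(K^m_n)=0$ and $g(K^m_n)=mn+1$ (the genus computed from the multivariable Alexander polynomial of an explicit three-component link via the Torres conditions, pinned exactly by a ribbon-number upper bound), and then cables: Lemma~\ref{lem:boundedslicelimit} shows the $(p,q)$--cabled family satisfies $\lim_{n\to\infty} g/g_4 = 1 + \frac{2m}{p-1}$, with the upper bound on $g_4$ coming from a cabled slice surface and the lower bound from Van Cott's cabling formula for $\tau$ --- so the slice genus, not the Thurston norm of $[D]$, carries the load. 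If you wanted to salvage a norm-theoretic construction, you would need a non-fibered certificate that $x([D]) = 2p$ (for instance an Alexander polynomial or sutured manifold lower bound for the link $K' \cup c$), which is precisely the hard computation your fiberedness trick was designed to bypass.
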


To prove this theorem we first need a technical lemma.

\begin{lemma}
\label{lem:boundedslicelimit}
If a twist family of knots obtained by twisting a knot $K$ along an unknot $c$ with winding number $\omega$ has bounded slice genus, 
then the twist family $\{(K^{p,q})_n\}$ obtained by twisting the $(p,q)$--cable of $K$ along $c$ satisfies
\[ \lim_{n \to \infty} \frac{g( (K^{p,q})_n )}{g_4(  (K^{p,q})_n )}  =1+ \frac{p}{p-1} \lim_{n \to \infty} \frac{2g(K_n)}{p \omega^2 n-1}.\]
\end{lemma}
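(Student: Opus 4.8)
The plan is to realize the twisted cable $(K^{p,q})_n$ as a cable of the twisted companion $K_n$, and then feed the genus and slice‑genus estimates already available in the paper into the ratio. First I would record a constraint: by Theorem~\ref{thm:slice_genus_bound} applied to $\{K_n\}$, the inequality $C + n\omega(\omega-1)\le 2g_4(K_n)$ forces $\omega(\omega-1)=0$ whenever the slice genus is bounded, so $\omega\in\{0,1\}$; since the stated formula requires $p\omega^2>0$, the relevant case is $\omega=1$. The key geometric step is the identification $(K^{p,q})_n=(K_n)^{p,\,q_n}$ with $q_n=q+np\omega^2$. This holds because $c$ lies in the exterior of a solid torus $N(K)$ carrying the cable, so twisting along $c$ acts only on the companion, sending $K$ to $K_n$ while transporting the pattern; the self‑linking of $K$ changes by $n\omega^2$ under $n$ twists, which reframes the cabling curve from slope $q$ to slope $q_n=q+np\omega^2$ relative to the Seifert framing of $K_n$. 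Note $\gcd(p,q_n)=\gcd(p,q)=1$ and $q_n>0$ for large $n$, so $(K_n)^{p,q_n}$ is a genuine cable knot with positive cabling parameter.

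With this identification, Schubert's cabling formula for the Seifert genus gives, for $n$ large enough that $q_n>0$,
\[ 2g\big((K^{p,q})_n\big) = 2p\,g(K_n) + (p-1)(q_n-1). \]
For the slice genus I would apply Theorem~\ref{thm:slice_genus_bound} to the cable twist family $\{(K^{p,q})_n\}$, whose winding number about $c$ is $p\omega=p$ (each of the $p$ longitudinal strands links $c$ with the companion's linking number). This sandwiches $2g_4\big((K^{p,q})_n\big)$ between $\tilde C+np(p-1)$ and $\tilde C'+np(p-1)$; since $(p-1)(q_n-1)=np(p-1)+O(1)$ when $\omega=1$, we obtain
\[ 2g_4\big((K^{p,q})_n\big) = (p-1)(q_n-1) + O(1), \]
where the bounded error absorbs the (bounded) contribution of $g_4(K_n)$.

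Finally I would form the ratio and take the limit. Dividing numerator and denominator of $g/g_4$ by $(p-1)(q_n-1)$ and using that $q_n-1\to\infty$ while $g(K_n)$ grows at most linearly in $n$ (Theorem~\ref{thm:twistgenera}, so the relevant limit exists and is finite), the bounded error and the bounded $g_4(K_n)$ term both die, leaving
\[ \lim_{n\to\infty}\frac{g((K^{p,q})_n)}{g_4((K^{p,q})_n)} = 1 + \frac{p}{p-1}\lim_{n\to\infty}\frac{2g(K_n)}{q_n-1} = 1 + \frac{p}{p-1}\lim_{n\to\infty}\frac{2g(K_n)}{p\omega^2 n-1}, \]
since $q_n-1=np\omega^2+(q-1)$ is asymptotic to $p\omega^2 n-1$. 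The main obstacle is the bookkeeping in the geometric step: pinning down that twisting the cable along $c$ returns a cable of $K_n$ and that the cabling slope shifts by exactly $np\omega^2$ (rather than some other multiple of $n$), together with confirming that the slice‑genus error term genuinely stays bounded. This last point is where the hypothesis of bounded slice genus is essential, both to force $\omega=1$ and to keep $g_4(K_n)$ from contributing to the limit.
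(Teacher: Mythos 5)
Your proposal is correct, but the slice-genus half of it runs along a genuinely different track from the paper's. The paper keeps general $\omega$ throughout and controls $g_4((K^{p,q})_n)$ via the same cabling identity $(K^{p,q})_n=(K_n)^{p,\,q+p\omega^2 n}$ you use, but then invokes VanCott's cabling formula for $\tau$ as the lower bound and an explicitly constructed slice surface (the relative torus-knot fiber $S_{p,q}$ capped with $p$ parallel copies of a slice surface for $K_n$ inside $S_K\times D^2$) as the upper bound; both carry the same dominant term $(p-1)(q+p\omega^2n-1)/2$, and boundedness of $g_4(K_n)$, hence of $\tau(K_n)$, closes the sandwich. You instead apply Theorem~\ref{thm:slice_genus_bound} twice: once to $\{K_n\}$ to force $\omega(\omega-1)=0$ and reduce to $\omega=1$, and once to the cable family itself, whose winding number about $c$ is $p\omega=p$, to get $2g_4((K^{p,q})_n)=np(p-1)+O(1)$; with Schubert's formula and the observation $\gcd(p,q_n)=\gcd(p,q)=1$ this yields the limit. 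Your reduction to $\omega=1$ is consistent with the lemma's implicit scope: for $\omega=0$ the denominator $p\omega^2n-1\equiv -1$ degenerates and the paper's own proof also tacitly needs $p\omega^2 n-1\to\infty$, and $\omega=1$ is the only case used in the application (Theorem~\ref{thm:proportion}). What each approach buys: yours is self-contained within the Section~\ref{Seifert/slice} machinery, avoiding VanCott's theorem and the slice-surface construction for cables entirely, at the cost of the preliminary case analysis; the paper's gives genuine two-sided bounds $p\tau(K_n)+\cdots\le g_4((K^{p,q})_n)\le p\,g_4(K_n)+\cdots$ valid for every $n$ and arbitrary $\omega\ge 1$, and since both sides of its sandwich share the same numerator it delivers the equality of limits without separately invoking Theorem~\ref{thm:twistgenera} for existence (your explicit appeal to the eventual linearity of $g(K_n)$ serves that purpose and is fine). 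One small remark: your parenthetical that the $O(1)$ error \emph{absorbs the bounded contribution of} $g_4(K_n)$ misdescribes your own argument — the $O(1)$ comes directly from the constants in Theorem~\ref{thm:slice_genus_bound}, and indeed for $\omega=1$ that theorem makes $g_4(K_n)$ bounded automatically, so the bounded-slice-genus hypothesis is really used only to exclude $\omega\ge 2$.
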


\begin{proof}
For positive coprime integers $p,q$, 
let $K^{p,q}$ be the $(p,q)$--cable of $K$, 
the simple closed curve in $\bdry N(K)$ of slope $q/p$ and homologous to $p \lambda + q \mu$ 
where $\mu, \lambda$ are the standard meridian and longitude of $K$.  
By \cite[\S21 Satz 1]{Schubert}, $g(K^{p,q}) = p \, g(K) + (p-1)(q-1)/2$. 

A minimal genus Seifert surface for $K^{p,q}$ may be constructed as follows.
Push $K^{p,q}$ into the solid torus $N(K)$ and view it as the $(p,q)$--torus knot that wraps $p$ times.  As a torus knot in $N(K)$, the complement of $K^{p,q}$ has a fibration over $S^1$ with a fiber $S_{p,q}$ of genus $(p-1)(q-1)/2$ whose boundary consists of a longitude of $K^{p,q}$ and $p$ preferred longitudes of $K$.  So we may view $S_{p,q}$ as a ``relative'' Seifert surface of $K^{p,q}$ in $N(K)$.   Attaching $p$ copies of a minimal genus Seifert surface $F_K$ of $K$ to $S_{p,q}$, 
we obtain a Seifert surface of $K^{p, q}$ with genus $p \, g(F_K) + (p-1)(q-1)/2 = p \, g(K) + (p-1)(q-1)/2$ as desired. 

We may adapt this construction to obtain a bound on the slice genus of $K^{p,q}$.  
Let $S_K \subset B^4$ be a slice surface for $K$, and take a tubular neighborhood $S_K \times D^2$ in $B^4$.  
Since $S_{p,q} \subset S^3 = \bdry B^4$ and $\bdry S_{p,q}$ consists of the knot $K^{p,q}$ and $p$ parallel copies of the preferred longitude of $K$, 
we may now instead attach $p$ pairwise disjoint copies of $S_K$ in $S_K \times D^2 \subset B^4$ to $S_{p,q}$ to form a slice surface for $K^{p,q}$.  
Therefore we have 
\[g_4(K^{p,q})
 \leq p \, g(S_K) + (p-1)(q-1)/2
= p \, g_4(K) + (p-1)(q-1)/2.\]

\smallskip

Furthermore, if $c$ is an unknot disjoint from $K$, 
then observe that $\wind_c(K^{p,q}) = p\, \wind_c(K)$ and $\wrap_c(K^{p,q}) = p\, \wrap_c(K)$.  

Using how slopes on $K$ behave under twisting along $c$, 
we can determine how twist families of cables and cables of twist families are related.  
In particular, given a knot $K$ and twisting circle $c$, 
we write $(K^{p,q})_n$ for the $n$--twist of the $(p,q)$--cable of $K$  and $(K_n)^{p',q'}$ for the $(p',q')$--cable of the $n$--twist of $K$. 
If $ (K^{p,q})_n = (K_n)^{p',q'}$, 
then we must have $p=p'$ due to the wrapping numbers.  
Then since $r_n =r + \omega^2 n$ for slopes 
$r$ in $\bdry N(K)$ and $r_n$ in $\bdry N(K_n)$ using standard meridian-longitude coordinates, 
we have that $q' = q+p \omega^2 n$. 

Fix positive coprime integers $p,q$, 
and consider the family of knots $\{(K^{p,q})_n\}$ obtained by twisting the $(p,q)$--cable of $K$ about the unknot $c$.
Because $ (K^{p,q})_n = (K_n)^{p,q+p \omega^2 n}$, 
by the behavior of $\tau$  under cabling given in \cite{VanCott} and that $\tau$ gives a lower bound on $g_4$, 
we have 
\[p\, \tau( K_n ) + (p-1)(q+p \omega^2 n -1)/2 \leq \tau( (K^{p,q})_n ) \leq g_4( (K^{p,q})_n ) \leq p \, g_4(K_n) + (p-1)(q+p \omega^2 n -1)/2.\]
Therefore
\[ \frac{p \, g(K_n) + (p-1)(q+p \omega^2 n-1)/2}{p \, g_4(K_n) + (p-1)(q+p \omega^2 n -1)/2} \leq \frac{g( (K^{p,q})_n )}{g_4(  (K^{p,q})_n )} \leq \frac{p  \,g(K_n) + (p-1)(q+p \omega^2 n-1)/2}{p\, \tau( K_n ) + (p-1)(q+p \omega^2 n -1)/2}\]
and hence
\[ \frac{\frac{2p}{p-1} \frac{g(K_n)}{p \omega^2 n-1}+1}{\frac{2p}{p-1} \frac{g_4(K_n)}{p \omega^2 n-1}+1} \leq \frac{g( (K^{p,q})_n )}{g_4(  (K^{p,q})_n )} \leq \frac{\frac{2p}{p-1}  \frac{g(K_n)}{p \omega^2 n-1}+1}{\frac{2p}{p-1} \frac{\tau(K_n)}{p \omega^2 n-1}+1}.\]

Assume $g_4(K_n)$ is bounded as $n \to \infty$ so that $\tau(K_n)$ is bounded too.  
Then the denominators on the left and right above both decrease to $1$ as $n \to \infty$.  
Since the numerators are the same, we obtain

\[ \lim_{n \to \infty} \frac{g( (K^{p,q})_n )}{g_4(  (K^{p,q})_n )}  =1+ \frac{p}{p-1} \lim_{n \to \infty} \frac{2g(K_n)}{p \omega^2 n-1}.\]
\end{proof}

\begin{proof}[Proof of Theorem~\ref{thm:proportion}] 
We first construct a family of twist families of ribbon knots 
$\{K^m_n\} = \{ (K^m)_n \}$  
obtained by twisting $K^m$ along an unknot $c^m$ 
with $\omega = \wind_{c^m}(K^m) = 1$ so that $g_4(K^m_n) = 0$ and $g(K^m_n) = mn+1$ for positive integers $m,n>0$.  
Then we will apply Lemma~\ref{lem:boundedslicelimit} to obtain our result.

Consider the $3$--component link $K \cup \hat{c} \cup c$ in the top left of Figure~\ref{fig:wrappingribbons}, 
which may be recognized as the link $L10n73$ in the Thistlethwaite Link Table \cite{Thistlethwaite_katlas} and has Alexander polynomial 
\[\Delta_{K\, \cup\, \hat{c}\, \cup\, c}(x,y,z) = (-1 + x + y) (-x - y + x y) (-1 + z).\]
Let $E = S^3 - \nbhd(K \cup \hat{c} \cup c)$ be the exterior of this link.  
Then $H_1(E) = \Z^3$ generated by the homology classes of the oriented meridians $\mu_K$, $\mu_{\hat{c}}$, 
and $\mu_c$ of the components $K$, $\hat{c}$, and $c$, 
respectively.  
Let $\lambda_K$, $\lambda_{\hat{c}}$, and $\lambda_c$ be the corresponding oriented longitudes.
Observe that $[\lambda_{\hat{c}}] = -[\mu_c]$ and $[\lambda_c] = [\mu_K] - [\mu_{\hat{c}}]$ in $H_1(E)$.   
\\
\\
Now consider the family of links $K^m_n \cup \hat{c}^m_n \cup c^m_n$ 
(as shown on the right side of Figure~\ref{fig:wrappingribbons}) surgery dual to 
the filling of $E$ along the slopes 
$\mu_{K^m_n} = \mu_K$, $\mu_{\hat{c}^m}=\mu_{\hat{c}} - m \lambda_{\hat{c}}$, 
and $\mu_{c^m_n} = (mn+1)\mu_c - n\lambda_c$. 
That is, the new meridians are 
\begin{itemize}
	\item $[\mu_{K^m_n}] = [\mu_K]$,
	\item $[\mu_{\hat{c}^m_n}]=[\mu_{\hat{c}}] - m [\lambda_{\hat{c}}] = [\mu_{\hat{c}}] + m [\mu_c]$, and
	\item $[\mu_{c^m_n}] = (mn+1)[\mu_c] - n[\lambda_c] =  -n[\mu_K] + n[\mu_{\hat{c}}] + (mn+1)[\mu_c]$.
\end{itemize}
Hence 
\begin{itemize}
	\item $[\mu_K] = [\mu_{K^m_n}]$,
	\item $[\mu_{\hat{c}}] = -m n [\mu_{K^m_n}] + (mn+1)[\mu_{\hat{c}^m_n}] - m [\mu_{c^m_n}]$, and
	\item $[\mu_{c}] =  n [\mu_{K^m_n}] - n[\mu_{\hat{c}^m_n}] +  [\mu_{c^m_n}]$.
\end{itemize}
So we have: 
\begin{align*}
\Delta_{K^m_n\, \cup\, \hat{c}^m_n\, \cup\, c^m_n}(x,y,z)
& = \Delta_{K\, \cup\, \hat{c}\, \cup\, c}(x,  x^{-mn} y^{mn+1} z^{-m}, x^{n} y^{-n} z) \\
& = (-1 + x + x^{-mn} y^{mn+1} z^{-m}) (-x - x^{-mn} y^{mn+1} z^{-m} + x^{-mn+1} y^{mn+1} z^{-m}) (-1 + x^{n} y^{-n} z)
\end{align*}
Since 
$lk(K^m_n,\hat{c}^m_n)= -n$ and $lk(c^m_n,\hat{c}^m_n)=-1$, 
the 2nd Torres Condition \cite{Torres} implies

\begin{align*}
\Delta_{K^m_n\, \cup\, c^m_n}(x,z) 
&= \Delta_{K^m_n\, \cup\, \hat{c}^m_n\, \cup\, c^m_n}(x,1,z) / (x^{n} z^{-1}-1) \\
& = (-1 + x + x^{-mn}z^{-m})(-x-x^{-mn}z^{-m} + x^{-mn+1}z^{-m})(-1 + x^nz) / (x^{-n} z^{-1}-1) \\
& = (-1 + x + x^{-mn}z^{-m})(-x-x^{-mn}z^{-m} + x^{-mn+1}z^{-m})(x^{-n}z^{-1}-1)(-x^nz) / (x^{-n} z^{-1}-1)\\
& \doteq (-1 + x + x^{-mn}z^{-m})(-x-x^{-mn}z^{-m} + x^{-mn+1}z^{-m}).
\end{align*}
(Recall that $\doteq$ means equivalence up to a multiplicative unit.)
Then since $lk(K^m_n,c^m_n)=1$, the 2nd Torres Condition \cite{Torres} next implies

\begin{align*}
	\Delta_{K^m_n}(x) 
		&=\Delta_{K^m_n\, \cup\, c^m_n}(x,1) \cdot (x-1)/(x-1) \\
		&=\Delta_{K^m_n\, \cup\, c^m_n}(x,1) \\
		&= \left(x^{-m n}+x-1\right) \left(-x^{-m n}+x^{1-m n}-x\right)\\
                 &\doteq 1-x-x^{m n}+3 x^{m n+1}-x^{m n+2}-x^{2 m n+1}+x^{2 m n+2}\\
\end{align*}

Hence, 
$g(K^m_n) \ge mn + 1$. 
On the other hand, 
$K^m_n$ spans a ribbon disk with $mn+1$ ribbon singularities, 
i.e.\ ribbon number $mn+1$ as in the right of Figure~\ref{fig:wrappingribbons}.
Thus $mn + 1 \ge g(K^m_n)$ \cite{Fox}. 
It follows that $g(K^m_n) = mn + 1$. 

Then
by Lemma~\ref{lem:boundedslicelimit}, since $g_4(K^m_n) = 0$, for any choice of coprime positive integers $p,q$
 we have 
\[ \lim_{n \to \infty} \frac{g( ((K^m)^{p,q})_n )}{g_4(  ((K^m)^{p,q})_n )} 
=1+ \frac{p}{p-1} \lim_{n \to \infty} \frac{2g(K^m_n)}{p n-1}
= 1+ \frac{p}{p-1} \lim_{n \to \infty} \frac{2mn+2}{p n-1}
= 1+ \frac{2m}{p-1}\]
for each $m>0$.

For a given rational number $r > 1$, 
we can choose $m$ and $p$ so that $1+ \frac{2m}{p-1} = r$. 
This completes the proof of Theorem~\ref{thm:proportion}. 
\end{proof}

\begin{figure}[h]
	\begin{center}
		\includegraphics[width=0.75\textwidth]{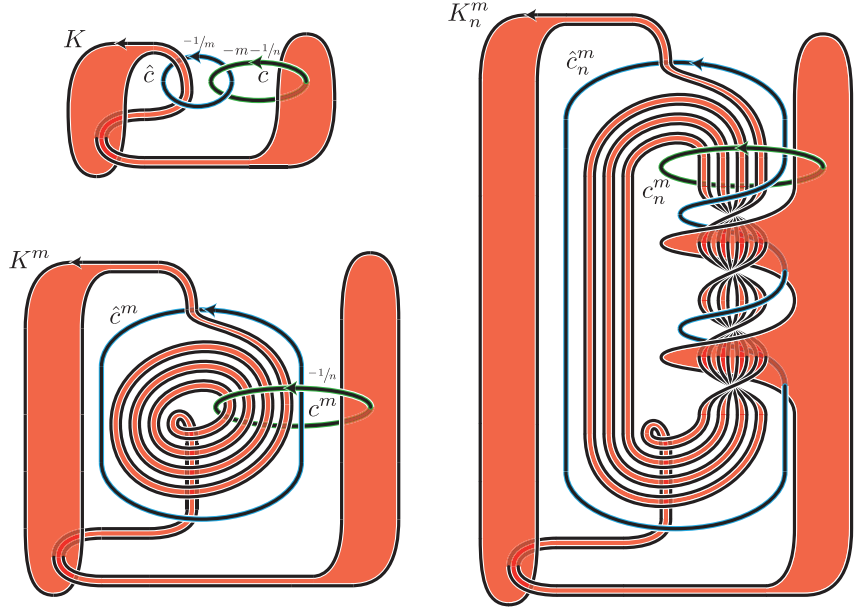}
		\caption{(Top Left) The link $K \cup \hat{c} \cup c$. 
		(Bottom Left) The result of $-1/m$--surgery on $\hat{c}$ yields the link $K^m \cup \hat{c}^m \cup c^m$.
		(Right) Twisting $K^m$ along $c^m$ produces the twist family $\{K^m_n\}$ with the link $K^m_n \cup \hat{c}^m_n \cup c^m_n$.
		The example shown has $m=4$ and $n=2$.}
		\label{fig:wrappingribbons}
	\end{center}
\end{figure}

\begin{proposition}
\label{prop:w=0}
For any $1 \le r \in \Q \cup \{\infty\}$, 
there exists a twist family $\{K_n\}$ with $\omega=0$ which satisfies 
$\displaystyle \lim_{n \to \infty} \frac{g(K_n)}{g_4(K_n)} = r$.
\end{proposition}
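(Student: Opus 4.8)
The plan is to realize each prescribed value $r$ by an explicit winding number $0$ family whose Seifert and slice genera are \emph{eventually constant}. The point is that when $\omega=0$ both $g(K_n)$ and $g_4(K_n)$ are bounded (as recorded in Section~\ref{section:Introduction}, and as Proposition~\ref{prop:upperslicebound} gives directly with $\omega=0$). Consequently, any such family with $g(K_n)\equiv a$ and $g_4(K_n)\equiv b$ for all large $n$ has $\lim_{n\to\infty} g(K_n)/g_4(K_n)=a/b$ exactly, with the convention $a/0=\infty$. Writing a given rational $r\ge 1$ as $a/b$ with $a\ge b\ge 1$, and $r=\infty$ as $a/0$ with $a\ge 1$, it therefore suffices to produce, for each such admissible pair $(a,b)$, a winding number $0$ twist family with these eventual genus values.

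To tune the pair $(a,b)$ I would use connected sums of two building blocks: the positive trefoil $T$, for which $g(T)=g_4(T)=1$ and $\tau(T)=1$, and a genus one ribbon knot $S$ (for instance $6_1$), for which $g(S)=1$ while $g_4(S)=\tau(S)=0$. Set $J=(\#^{b} T)\,\#\,(\#^{a-b} S)$. Additivity of the Seifert genus under connected sum gives $g(J)=a$; subadditivity of the slice genus gives $g_4(J)\le b$; and additivity of $\tau$ together with $g_4\ge\tau$ gives $g_4(J)\ge\tau(J)=b$. Hence $g_4(J)=b$. For the case $r=\infty$ one simply takes $b=0$, so that $J=\#^{a} S$ is a nontrivial slice knot of genus $a$.

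It remains to embed $J$ into a winding number $0$ twist family without disturbing these genera. The cleanest choice is to let $c$ be an unknot split from $J$: then $\omega=0$, $K_n=J$ for every $n$, and $g(K_n)/g_4(K_n)=a/b=r$ for all $n$, so the limit is $r$. If one prefers a genuinely infinite family, for $r>1$ or $r=\infty$ one instead takes $c$ to bound a disk meeting a trivial product band of a minimal genus Seifert surface of one of the $S$--summands in two points of opposite sign, exactly as in Example~\ref{w=0_twist_quasipositive} and Figure~\ref{quasipositive}. Then $\omega=\wind_c(J)=0$ while $\wrap_c(J)=2$, so $\{K_n\}$ contains infinitely many distinct knots; twisting along such a $c$ only inserts full twists into that band, so a Seifert surface of genus $a$ persists (giving $g(K_n)\le a$) and the twisted band keeps the ribbon summand ribbon (giving $g_4(K_n)\le b$), with equality for large $n$.

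The main obstacle is the lower bound on the slice genus: winding number $0$ twisting is not known to leave $g_4$ constant---this is precisely Question~\ref{question_w=0}---so I must certify $g_4(K_n)=b$ rather than merely bound it above. I would do this with an additive concordance invariant: since $c$ is supported inside a single $S$--summand, the connected sum decomposition is preserved and the contribution of the $b$ copies of $T$ to $\tau$ (equivalently to the Rasmussen invariant $s$ of \cite{Ras}) is untouched, while the twisted $S$--summand remains slice. Thus $\tau(K_n)=b$, whence $g_4(K_n)\ge b$ for every $n$; combined with the matching upper bound this pins down $g_4(K_n)=b$, makes the limit genuinely defined, and yields $\lim_{n\to\infty} g(K_n)/g_4(K_n)=a/b=r$.
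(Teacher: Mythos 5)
Your primary construction (the split circle) is logically sound and does verify the statement as literally written: the paper's introduction admits split circles as twist families (with $K_n=K$ for all $n$), and your connected-sum bookkeeping --- additivity of $g$, subadditivity of $g_4$, and the lower bound $g_4\ge\tau$ with $\tau$ additive --- is exactly the kind of computation the paper itself runs. But it trivializes the proposition, and the paper's proof delivers strictly more: for each $r$ it produces a \emph{non-split} link $K\cup c$ with wrapping number at least $2$, hence infinitely many distinct knots $K_n$, with $g(K_n)/g_4(K_n)=r$ for all but finitely many $n$. For rational $r$ it does this by making the twist region itself carry the variation: the varying summand is the twist-knot family $\{T_n\}$ coming from the Whitehead link, whose invariants are known ($g(T_n)=1$ for $n\ne 0$; $g_4(T_n)=1$ for $n\ne 0,2$ by \cite{CG}; $\tau(T_n)=1$ for $n\le -1$ by \cite{Hedden_Whitehead}); summing with $T_{2,2m+1}$ and a slice knot of genus $N$, then mirroring, gives limit $1+\frac{N}{1+m}$. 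For $r=\infty$ it takes a non-split circle disjoint from a ribbon disk, so every $K_n$ stays ribbon while $g(K_n)=0$ for at most finitely many $n$ by \cite{KMS, Mathieu}.

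Your ``genuinely infinite family'' variant, however, has two genuine gaps, so it cannot stand in for the non-degenerate statement. First, the claim that inserting full twists into a band of a Seifert surface ``keeps the ribbon summand ribbon'' (and leaves its $\tau$ zero) is false in general, and the paper's own data refutes it: the Whitehead-link configuration is precisely a disk meeting two oppositely oriented strands of the slice twist knot $T_2=6_1$, yet $g_4(T_n)=1$ for $n\ne 0,2$ and $\tau(T_n)=1$ for $n\le -1$. Sliceness is a property of the ribbon disk, not of a Seifert surface band; the correct hypothesis is the one the paper uses for $r=\infty$, namely $c$ disjoint from a ribbon disk of the $S$--summand, which would salvage your $\tau$-argument and give $g_4(K_n)=b$ for all $n$. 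Second, even after that repair, your assertion that a genus-$a$ surface persists ``with equality for large $n$'' is only the upper bound: Theorem~\ref{thm:twistgenera} with $\omega=0$ shows $g(K_n)$ is eventually constant, but gives no control on the value of that constant, which need not be $a$. For $r=\infty$ this is harmless (any nontrivial slice $K_n$ has ratio $\infty$), but for rational $r$ the exact eventual genus must be certified --- which is precisely why the paper routes the rational case through twist knots of known genus rather than through an arbitrary band twisting.
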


\begin{proof}	
We show that in fact for any $1 \le r \in \Q \cup \{\infty\}$ there are twist families with $\omega=0$ satisfying $\frac{g(K_n)}{g_4(K_n)} = r$ for all but finitely many integers $n$.
	
\smallskip
For $r = \infty$, let $K \cup c$ be a non-split link consisting of a non-trivial ribbon knot $K$ and an unknot $c$ that is disjoint from a ribbon disk for $K$.  
Then the twist family $\{K_n\}$ consists of ribbon knots so that $g_4(K_n) = 0$ for all $n \in \Z$ while $g(K_n)=0$ for at most one integer $n$ 
by \cite{KMS, Mathieu}.  
Hence for all but finitely many $n$, $g(K_n)/g_4(K_n)$ is a positive integer divided by $0$ and hence $g(K_n)/g_4(K_n)=\infty$ for these $n$ by convention.
	
\smallskip
For rational numbers $r \geq 1$, 
first let $T \cup c$ be the Whitehead link so that the twist family $\{T_n\}$ obtained by twisting $T$ along $c$ is the family of twist knots (where the Whitehead link is clasped positively so that $T_1$ is the figure eight knot, 
$T_{-1}$ is the positive trefoil knot, 
and $T_2$ is the knot $6_1$ in Rolfsen's knot table).  
Note that $g(T_n) = 1$ is except when $n = 0$ where $g(T_0)=0$, 
and Casson-Gordon \cite{CG} shows that $g_4(T_n)=1$ except when $n = 0, 2$ where $g_4(T_0)=g_4(T_2)=0$.

Then for any pair of non-negative integers $N,m$,  
let $K$ be the connected sum $T\, \#\, T_{2,2m+1}\, \#\, k$, 
where $T_{2,2m+1}$ is the $(2,2m+1)$--torus knot and $k$ is a slice knot with $g(k)=N$.  
(For example, one may take $k$ to be the connected sum of $N$ copies of $T_2$.)  
Then $K_n = T_n\, \#\, T_{2,2m+1}\, \#\, k$ so that $g(K_n) = 1 + m + N$ for $n \neq 0$.
Now we claim $g_4(K_n) = 1+ m$ for $n \le -1$. 
Since $k$ is slice, $g_4(K_n) \leq g(T_n\, \#\, T_{2,2m+1}) = 1+m$.  
Recall that the concordance invariant $\tau$ is additive under connected sums, so
$\tau(K_n) = \tau(T_n\, \#\, T_{2,2m+1}\, \#\, k) = \tau(T_n) + \tau(T_{2, 2m+1}) + \tau(k)$. 
Also note that $\tau(T_{2, 2m+1}) = m$ and $\tau(k) = 0$, because $k$ is slice. 
Furthermore following \cite[Theorem 1.5]{Hedden_Whitehead} we have $\tau(T_n) = 1$ when $n \le -1$ 
(which also follows from $T_n$ being strongly quasipositive for $n \leq -1$ \cite{Rudolph_braidzel}), 
and otherwise it is $0$. 
Thus $\tau(K_n) = 1+m \le g_4(K_n)$ for $n \le -1$. 
This shows that $g_4(K_n) = 1 + m$ whenever $n \le -1$.
Hence, for the mirrored twist family $\{\overline{K}_n\} = \{\overline{K_{-n}}\}$ we have  
$g(\overline{K_n}) = 1 + m + N$ and $g_4(\overline{K_n}) = 1 + m$, 
and 
$g(\overline{K}_n)/g_4(\overline{K}_n) = 1 +\frac{N}{1+m}$ for every $n \geq 1$.  
Since for any rational number $r \geq 1$ there are non-negative integers $N,m$ so that $r=1 +\frac{N}{1+m}$, 
we have our result. 
\end{proof}

\section{Tight fibered knots in twist families}
\label{tight_fibered_family}

\subsection{Positive twists, negative twists and coherent families}

Recall that  {\em tight fibered} knots are precisely the fibered, strongly quasipositive knots \cite{Hed_positive, BI}.

\begin{thm:postwistquasipositive}
Let $\{ K_n \}$ be a twist family of knots obtained by twisting $K$ along $c$ such that $c$ is neither split from $K$ nor a meridian of $K$. 
If $K_n$ is a tight fibered knot for infinitely many integers $n$, 
then $c$ links $K$ coherently and there is a constant $N$ such that 
\begin{itemize}
\item $K_n$ is tight fibered for every $n \geq N$ and
\item $K_n$ is tight fibered for only finitely many $n < N$.
\end{itemize}
\end{thm:postwistquasipositive}

\begin{proof}  
By Corollary~\ref{cor:sqptwist}(2) since $\{ K_n\}$ contains infinitely many tight fibered knots, 
$c$ links $K$ coherently.  
Then Theorem~\ref{thm:Murasugi-sum} shows the following:  
For any sufficiently large integer $N$ there is a minimal genus Seifert surface $\Sigma_N$ for $K_N$ 
such that for each $n > N$ the knot $K_n$ has a Seifert surface $\Sigma_n$ that may be obtained by a Murasugi sum of 
$\Sigma_N$ with the fiber surface of a positive torus knot.   
Through mirroring,
this theorem also implies that for any sufficiently negative integer $-N$ 
there is a minimal genus Seifert surface $\Sigma_{-N}$ for $K_{-N}$ such that the knot $K_n$ has a Seifert surface $\Sigma_n$ that may be obtained by a Murasugi sum of $\Sigma_{-N}$ with the fiber surface of a negative torus knot for each $n < -N$.  
Since a fibered knot has a unique minimal genus Seifert surface (up to isotopy), 
this sets us up to employ a key result of Rudolph that the Murasugi sum of two surfaces is quasipositive if and only if the two summands are quasipositive \cite{Rudolph_quasipositive_plumbing} and of Gabai that the Murasugi sum of two surfaces is a fiber if and only if the two summands are fibers \cite{gabai-fibered}.
	
So suppose there are infinitely many fibered strongly quasipositive knots in $\{K_n\}_{n<0}$. 
Then there is a negative integer $-N$ such that both $K_{-N}$ is a fibered strongly quasipositive knot and, 
by Theorem~\ref{thm:Murasugi-sum}, 
every knot $K_n$ with $n<-N$ has a minimal genus Seifert surface $\Sigma_n$ obtained as a Murasugi sum of the quasipositive fiber surface $\Sigma_{-N}$ of $K_{-N}$ and the fiber surface of a negative torus knot.
Since the fiber surface of a negative torus knot is not quasipositive, the surface $\Sigma_n$ for $n<-N$ cannot be quasipositive \cite{Rudolph_quasipositive_plumbing}.   
Because the Murasugi sum of two fiber surfaces is again a fiber \cite{gabai-fibered}, 
$\Sigma_{n}$ must be the unique minimal genus of $K_n$.  
Therefore $K_n$ cannot be the boundary of a quasipositive surface, 
and hence $K_n$ cannot be strongly quasipositive 
for $n < -N$, 
a contradiction.  
	
Now by hypothesis and because only finitely many knots in $\{K_n\}_{n<0}$ may be strongly quasipositive, 
we may choose $N$ sufficiently large so that both $K_N$ is a fibered strongly quasipositive knot with fiber surface $\Sigma_N$ and 
Theorem~\ref{thm:Murasugi-sum} applies. Since the fiber surface of a positive torus knot is a quasipositive surface, 
$\Sigma_n$ is a quasipositive fiber surface for all $n>N$ due to \cite{Rudolph_quasipositive_plumbing} and \cite{gabai-fibered}.  
Hence every knot in $\{K_n\}_{n \geq N}$ is fibered and strongly quasipositive.
\end{proof}

\subsection{Tightness in twist families and braid axes}

\begin{thm:braidaxis}
Let $\{K_n\}$ be a twist family obtained by twisting a knot $K$ along an unknot $c$ that is neither split from $K$ nor a meridian of $K$.
Then $c$ is a braid axis of $K$ if and only if both $K_n$ and $\overline{K_{-n}}$ and tight fibered for sufficiently large $n$.
\end{thm:braidaxis}

\begin{proof}
Half of this is well known.  
If $c$ is a braid axis, 
then $K_n$ is the closure of a positive braid for sufficiently positive integers $n > 0$ 
and the closure of a negative braid for sufficiently negative integers $n < 0$.  
A closed positive braid, being a sequence of plumbings of positive Hopf bands onto the disk, is tight fibered.  
The mirror of a closed negative braid is a closed positive braid, and hence tight fibered.

\medskip

So assume  $\{K_n\}$ is a twist family with winding number $\omega$ 
for which $K_n$ is positive tight fibered for sufficiently positive integers $n > 0$ 
and negative tight fibered for sufficiently negative integers $n < 0$.  
By Corollary~\ref{cor:sqptwist}, the twist family is coherent, 
and with the hypotheses of the theorem, $\omega = \eta \geq 2$.  
Therefore there is a balanced, oriented $\omega$--strand tangle $\kappa$ in $D^2 \times [0,1]$ so that the braid closure of $\kappa$ is $K$ and the axis of the braid closure is $c$.  
Furthermore, let $\delta = \sigma_{\omega-1} \dots \sigma_{2} \sigma_1 \in B_\omega$ be the dual Garside element in the $\omega$--strand braid group so that $\delta^\omega =\Delta^2$, the square of the Garside element $\Delta$, is a positive full twist.
Then $K_n$ is the braid closure of $ \Delta^{2n}\kappa$ for any $n \in \Z$.  
Let $\overline{\kappa}$ be the mirror of $\kappa$ in $D^2 \times I$ across a disk $D^2 \times \{1/2\}$.  
Observe that if $\kappa$ were a braid, then $\overline{\kappa} = \kappa^{-1}$.  
So, regarding the mirroring of $K_n$ as occurring across the sphere containing $c$ that $K$ intersects $2\omega$ times, 
we may view $\overline{K_n}$ as the braid closure of $\overline{\kappa}\Delta^{-2n}$.

Now by Theorems~\ref{thm:postwistquasipositive} and \ref{thm:Murasugi-sum}, 
there is a constant $N_+>0$ such that for all $n \geq N_+$, $K_n$ is tight fibered 
and the fiber surface $\hat{F}_n$ of $K_n$ is a Murasugi sum of the fiber $\hat{F}_{N_+}$ of $K_{N_+}$ with the fiber 
$\hat{F}_{(\omega,\ (n-N_+)\omega + 1)}$ of the $(\omega,\ (n-N_+)\omega + 1)$--torus knot as described in the proof of Theorem~\ref{thm:Murasugi-sum}.  

Similarly, there is a constant $N_- < 0$ such that for all $n \leq N_-$, 
$\overline{K_n}$ is tight fibered  and the fiber surface $\overline{\hat{F}_n}$ of $\overline{K_n}$ is a Murasugi sum of the fiber $\overline{\hat{F}_{N_-}}$ of $\overline{K_{N_-}}$ with the fiber 
$\hat{F}_{(\omega,\ -(n-N_-)\omega + 1)}$ of the $(\omega,\ -(n-N_-)\omega + 1)$--torus knot.  
(Since $n < N_-$, this is a positive torus knot.)  

Since $K_{N_+ +1}$ is tight fibered, 
we may take the Murasugi sum of $\hat{F}_{N_+ +1}$ with the fiber $\hat{F}_{(\omega,3)}$ of the $(\omega, 3)$--torus link in a similar manner to produce a surface $\hat{F}_{N_+ +1}'$ that is a fiber of the tight fibered link that is the braid closure of 
$\delta^2 \Delta^{2N_+ + 2} \kappa$.   
A subsequent Murasugi sum of $\hat{F}_{N_+ +1}'$ with $\overline{\hat{F}_{N_- -1}}$ may be then performed so that the resulting surface $\hat{F}_*$ is a fiber of the tight fibered knot $K^*$ that is the braid closure of the balanced tangle 
$\delta \overline{\kappa} \Delta^{-2N_- +2} \delta^{-1} \cdot \delta \cdot \Delta^{2N_+ + 2} \kappa 
= \delta \overline{\kappa} \Delta^{2(N_+ - N_-) +4}  \kappa$. 
This construction is illustrated in Figure~\ref{fig:sumposneg} in the case $\omega =3$.  
Since $\Delta^2$ is central in $B_{\omega}$, 
we have 
$\delta \overline{\kappa} \Delta^{2(N_+ - N_-) +4}  \kappa 
= \delta  \Delta^{2(N_+ - N_-) +4} \overline{\kappa} \kappa$.

\begin{figure}[h]
\begin{center}
\includegraphics[width=0.9\textwidth]{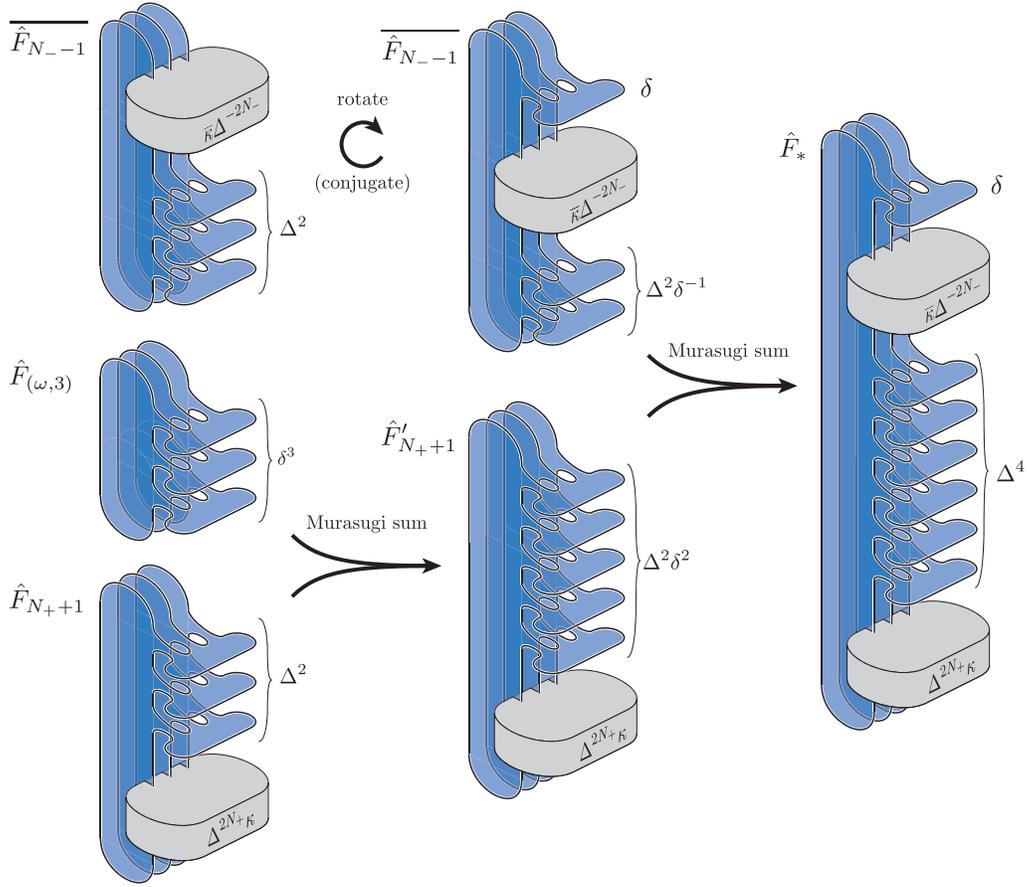}
\caption{Murasugi sums of the fibers $\hat{F}_{N_++1}$, $\overline{\hat{F}_{N_- - 1}}$, 
and the torus link fiber $\hat{F}_{(\omega,3)}$ produces the fiber $\hat{F}^*$. }
\label{fig:sumposneg}
\end{center}
\end{figure}

However, as in the proof of Proposition~\ref{prop:upperslicebound}, 
we may observe that $K^*$ is cobordant to the split link $D(\kappa) \sqcup T_{\omega,\ (N_+ - N_- + 2)\omega +1}$ 
by a planar surface with $\omega + 2$ boundary components.  
Here, $D(\kappa)$ is the double of $\kappa$, 
the braid closure of the balanced tangle $ \overline{\kappa}\kappa$, 
which is a link with $\omega$ components; 
see the top left of Figure~\ref{band_ribbon}. 

In the same way that one shows the connected sum of a knot and its mirror form a ribbon knot, 
we can see that $D(\kappa)$ is a ribbon link. 
Using the reflective symmetry of $D(\kappa)$, 
we may take a symmetric band connecting a local maximum and its reflected the local minimum 
to reduce the number of local maxima and local minima of $D(\kappa)$; 
by construction, this band connects the same component of $D(\kappa)$. 
Continuing such band surgeries, 
we obtain a link which is symmetric and 
every component becomes a trivial knot. 
Thus we obtain a trivial link so that $D(\kappa)$ bounds $\omega$ ribbon disks in $B^4$. 
See Figure~\ref{band_ribbon}.

\begin{figure}[h]
\begin{center}
\includegraphics[width=0.67\textwidth]{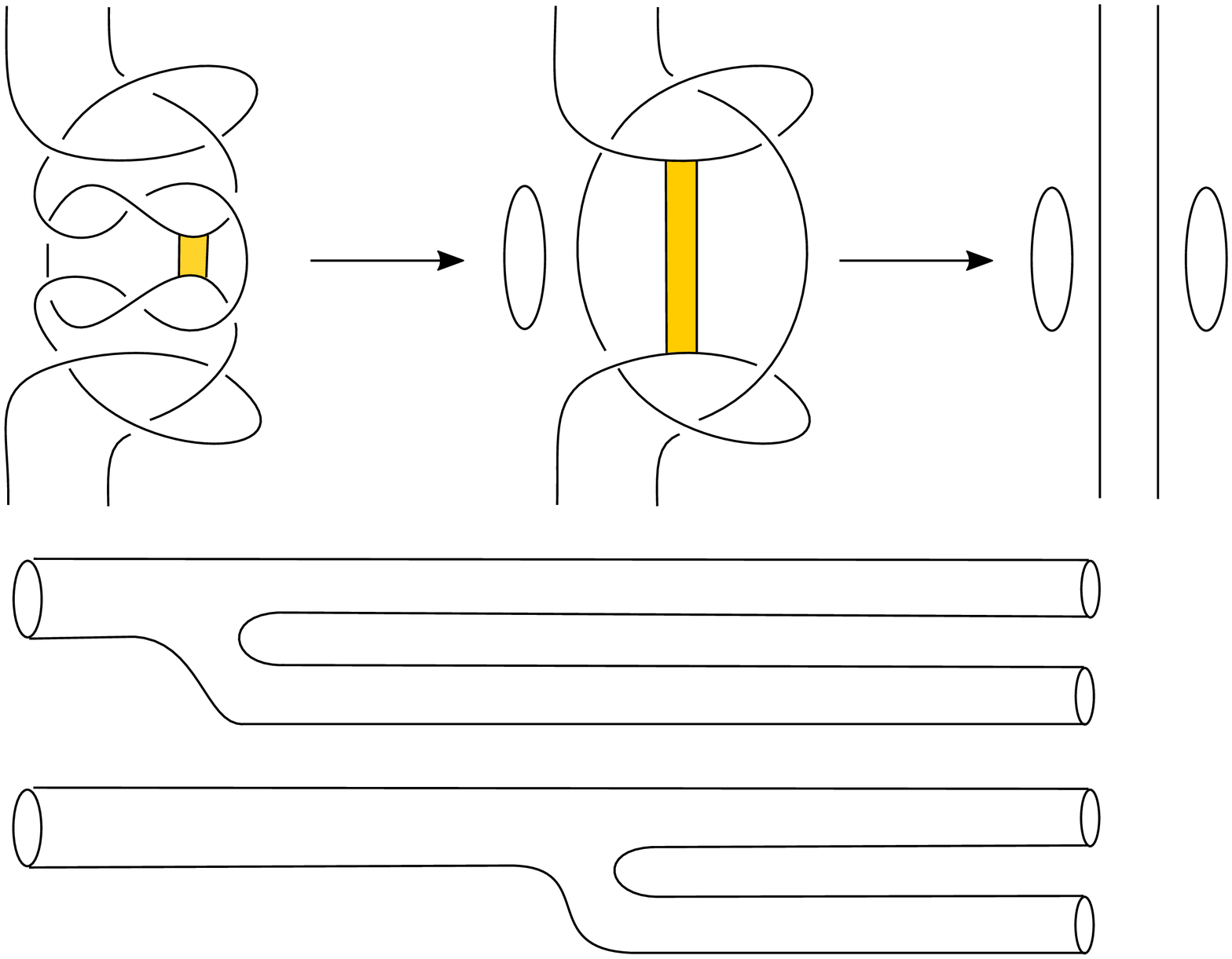}
\caption{$D(\kappa)$ is a ribbon link.}
\label{band_ribbon}
\end{center}
\end{figure}

Hence $K^*$ is actually {\em ribbon concordant} to the torus knot  $T_{\omega, (N_+ - N_- + 2)\omega +1}$. 
In particular, the knot 
$K^*\ \#\ \overline{ T_{\omega, (N_+ - N_- + 2)\omega +1}}$ is a ribbon knot.  
Therefore
\cite[Theorem 3]{Baker-NoteOnConcordance} tells us that $K^* = T_{\omega,\ (N_+ - N_- + 2)\omega +1}$.  

\medskip

Now let $c^*$ be the axis for the braid closure of $\delta  \Delta^{2(N_+ - N_- + 2)} \overline{\kappa} \kappa$, 
and consider the link $K^* \cup c^*$. 
Hence we obtain a twist family $\{K^*_n\}$ of braid closures of $\delta  \Delta^{2(N_+ - N_- + 2) } \overline{\kappa} \kappa$ 
where we know $K^*_n =  T_{\omega,\ n\omega +1}$ when $n \geq N_+ - N_- + 2$.  
(Here we parameterize the twist family so that $K^* = K^*_{N_+ - N_- + 2}$.)  
Lemma~\ref{lem:torusknottwistfamily} now implies that $K^*$ is the torus knot $T_{p,q}$ in the solid torus $S^3 - \nbhd(c^*)$ for some coprime integers $p$ and $q$, and hence $K^*_n = T_{p,\ np+q}$ for all integers $n$.  
Since  $K^*_n =  T_{\omega,\ n\omega +1}$ for $n \geq N_+ - N_- + 2$, 
it must be that in fact $K^*$ is isotopic to the torus knot $T_{\omega,\ (N_+ - N_- +2)\omega +1)}$ in the exterior of $c^*$.

In particular this means that the braid closure of the balanced $\omega$--tangle 
$\delta \Delta^{2(N_+ - N_-) +4} \overline{\kappa} \kappa$ is isotopic to the braid closure of 
$\delta \Delta^{2(N_+ - N_-) +4}$ in the exterior of $c^*$.   
Since $K^*$,  
the braid closure of $\delta  \Delta^{2(N_+ - N_-) +4} \overline{\kappa} \kappa$, 
is actually a closed $\omega$--braid, 
Lemma~\ref{lem:braidedtangleclosure} implies that $\delta \Delta^{2(N_+ - N_-) +4} \overline{\kappa} \kappa$ is also a braid. 
Then it follows from Lemma~\ref{lem:braidedtangleproduct} that $\kappa$ is a braid.  
Since $c$ is the axis of the braid closure of $\kappa$, $c$ is a braid axis of $K$.
\end{proof}

\begin{lemma}
\label{lem:torusknottwistfamily}
Let $K$ be a knot with a twisting circle $c$.  
If $K$ is not a $0$--bridge braid in the solid torus $S^3 - \nbhd(c)$, 
then there are at most five integers $n$ such that $K_n$ is a torus knot in $S^3$. 
\end{lemma}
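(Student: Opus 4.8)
The plan is to translate the statement into a question about Dehn fillings of the fixed exterior $M = S^3 - \nbhd(K \cup c)$ and then invoke the theory of exceptional Dehn fillings. We may assume $c$ is not split from $K$, so that $M$ is irreducible; and if $c$ is a meridian of $K$ then $K$ is the core of the solid torus $V = S^3 - \nbhd(c)$, a $0$--bridge braid, so there is nothing to prove. Since $K_n$ is the image of $K$ after $(-1/n)$--surgery on $c$, its exterior $X_n = S^3 - \nbhd(K_n)$ is obtained from $M$ by filling the boundary torus $T = \bdry N(c)$ along the slope $\gamma_n = -1/n$. The key reformulation is that, by the classification of Seifert fibered knots in $S^3$ (Moser, Burde--Zieschang), $K_n$ is a torus knot if and only if $X_n$ is Seifert fibered: either a solid torus, when $K_n$ is unknotted, or a Seifert fibration over the disk with exactly two exceptional fibers, when $K_n$ is a nontrivial torus knot. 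Thus the lemma reduces to bounding the number of integers $n$ for which $M(\gamma_n)$ is Seifert fibered.

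First I would record that the twisting slopes are far apart: for $n \neq m$ one computes the slope distance $\Delta(\gamma_n,\gamma_m) = |n-m|$, so any collection of the $\gamma_n$ of diameter $D$ contains at most $D+1$ integers. The heart of the argument is then the dichotomy from the geometric/JSJ decomposition of $M$. If $M$ is itself a Seifert fibered space or a cable space, I would argue directly that $K$ is a $0$--bridge braid in $V$: because $c$ is unknotted in $S^3$, the only possibilities are that $M$ is the product $T^2 \times I$, forcing $K$ to be the core of $V$, or a cable space, forcing $K$ to be a nontrivial torus knot on a boundary--parallel torus of $V$ — precisely the excluded conclusion. So we may assume $M$ is neither Seifert fibered nor a cable space. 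Then by the theory of Seifert fibered Dehn fillings only finitely many slopes on $T$ yield a Seifert fibered filling, and, more sharply, any two such slopes lie within the requisite distance bound $\Delta \le 4$. Since $\Delta(\gamma_n,\gamma_m) = |n-m|$, the integers $n$ with $X_n$ Seifert fibered then lie in an interval of length at most four, giving at most five such $n$.

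The hard part will be verifying this dichotomy together with the sharp distance bound in the non--hyperbolic regime. When $M$ is hyperbolic the argument is clean: Thurston's hyperbolic Dehn surgery theorem makes all but finitely many $X_n$ hyperbolic, hence not torus--knot exteriors, and the bound on distances between Seifert fibered filling slopes is supplied by the exceptional--filling distance estimates. The delicate case is when $M$ is toroidal (a graph manifold that is not globally Seifert fibered): an essential torus in $M$ must become inessential after filling in order for $X_n$ to be the atoroidal exterior of a torus knot, and I would analyze the JSJ pieces of $M$ meeting $T$, use Gordon's bounds on toroidal filling slopes together with the constraint that $c$ bounds a disk in $S^3$, and rule out that far--apart fillings are simultaneously Seifert fibered unless $M$ is already a cable space. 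Extracting the precise constant five, rather than mere finiteness, is exactly where the careful bookkeeping of these distance estimates is needed.
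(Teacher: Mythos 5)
Your skeleton does match the paper's: the paper also trichotomizes $S^3 - \nbhd(K\cup c)$ into hyperbolic, toroidal, and Seifert fibered cases, and your reduction (torus knot $\Leftrightarrow$ Seifert fibered exterior, via Moser and Burde--Zieschang), your slope computation $\Delta(-1/n,-1/m)=|n-m|$, and your disposal of the Seifert fibered case (forcing $T^2\times I$ or a cable space, i.e.\ $K$ a core or $0$--bridge braid in $V$) are all sound and parallel to the paper's endgame. The genuine gap is exactly where you defer the work: the pivotal claim that, for $M$ neither Seifert fibered nor cabled, any two slopes yielding Seifert fibered fillings satisfy $\Delta\le 4$ is asserted, not proved, and it is not available from ``the theory of Seifert fibered Dehn fillings.'' A Seifert fibered knot exterior is boundary--reducible (a solid torus) or annular (it contains an essential annulus), so the off-the-shelf distance theorems you could invoke are the annular--filling bound $\Delta\le 5$ of Gordon and Wu and distance bounds for general exceptional pairs on the order of $8$ (Lackenby--Meyerhoff, in the one-cusped setting); with $\Delta=|n-m|$ these yield windows of six, respectively nine, consecutive integers, so the constant five is unreachable by this route. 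Moreover, Thurston's theorem gives only finiteness depending on $M$, not a uniform constant, and when $M$ is toroidal the hyperbolic machinery does not apply at all, so even ``only finitely many Seifert fibered filling slopes'' is not generic there but something to be argued.

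The paper closes these gaps with inputs specific to twisting along an unknot rather than generic distance estimates. In the hyperbolic case it quotes \cite[Proposition~5.11(2)]{DMM1}, a twist--family hyperbolicity statement ($K_n$ is hyperbolic once $|n|>3$), which is what caps the torus--knot parameters at five integers. In the toroidal case it never bounds Seifert fibered slopes directly: it shows that $K_n$ remains a \emph{satellite} knot (hence not a torus knot) for all but at most two integers $n$, by applying \cite[Theorem~2.0.1]{CGLS} to the JSJ piece adjacent to $\partial N(c)$ when that piece is not cabled (the essential torus survives except for at most two fillings), and, when that piece is a cable space, by observing that the core of the resulting companion solid torus $W$ is a nontrivial torus knot for all but at most two $n$. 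Your toroidal paragraph is a plan rather than an argument, and ``Gordon's bounds on toroidal filling slopes'' target the wrong quantity: what must be bounded is the set of fillings that \emph{destroy} every essential torus, which is what the CGLS/cable-space analysis accomplishes. To complete your proof you would need to replace the unproven $\Delta\le 4$ claim with twist--family--specific results of this kind.
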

\begin{proof}
For simplicity, we may assume $K=K_0$ is a torus knot by reparametrization. 
We divide the argument into three cases according to whether $S^3 - \nbhd(K \cup c)$ is hyperbolic, toroidal, or Seifert fibered. 
Assume that there are more than five integers $n$ such that $K_n$ is a torus knot. 

Suppose first that $S^3 - \nbhd(K \cup c)$ is hyperbolic. 
Then \cite[Proposition 5.11(2)]{DMM1} shows that $K_n$ is hyperbolic whenever $|n|> 3$, 
and hence there are at most $5$ integers $n$ (including $n = 0$) for which $K_n$ is a torus knot, 
contradicting the assumption.

Now let us assume that $S^3 - \nbhd(K \cup c)$ is toroidal.  
Consider the torus decomposition \cite{JS, Jo} of $X = S^3 - \nbhd(K \cup c)$, 
and let $M$ be the decomposing piece which contains $\partial N(c)$. 
If $M$ is not cabled, 
\cite[Theorem~2.0.1]{CGLS} shows that except for at most two integers $n$, 
the exterior $E(K_n) = X \cup _{-1/n} N(c)$ is toroidal, and $K_n$ is a satellite knot.  
If $M$ is a cable space, 
then we have a (smaller) solid torus $W$ in the solid torus $V = S^3 - \nbhd(c)$ complementary to $M$ which contains $K$ in its interior. 
Note that $W$ wraps at least twice in $V$ and the core of $W$ is a $0$--bridge braid in $V$. 
Hence, at most two integers $n$ can make the core of $W$ an unknot in $S^3$, 
i.e.\ $M \cup_{-1/n} N(c)$ is a solid torus. 
For other integers $n$, the core of $W$ is a nontrivial torus knot in $S^3$, 
i.e.\ $M \cup_{-1/n} N(c)$ is a torus knot space.
(For instance if the core of $W$ is the $(2, 1)$--cable in $V$, 
then $K = K_0$ has a companion $(2, 1)$--torus knot which is the unknot and $K_{-1}$ has a companion $(2, -1)$--torus knot which is again the unknot. 
Except this situation, there is at most one integer $n$ such that $M \cup_{-1/n} N(c)$ is a solid torus.)
Hence, for all but at most two integers $n$, 
$K_n$ is a satellite knot with a nontrivial torus knot as a companion. 
This contradicts the assumption. 

Hence, $S^3 - \nbhd(K \cup c)$ is Seifert fibered. 
Thus it is a cable space and $K$ is a $0$--bridge braid in $V = S^3 - \nbhd(c)$,  as desired.
\end{proof}

\begin{lemma}
\label{lem:braidedtangleproduct}
Let $\tau_1$ and $\tau_2$ be balanced $\omega$--tangles.  
If their product $\tau_1 \tau_2$ is an $\omega$--braid, 
then each $\tau_1$ and $\tau_2$ is an $\omega$--braid.
\end{lemma}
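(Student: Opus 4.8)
The plan is to work inside the solid cylinder $D^2 \times [0,1]$ and reduce the lemma to straightening a single disk. Realize the product so that $\tau_2 = (\tau_1\tau_2)\cap(D^2\times[0,1/2])$ and $\tau_1 = (\tau_1\tau_2)\cap(D^2\times[1/2,1])$; then the two factors are separated by the fiber disk $\Delta = D^2\times\{1/2\}$. Since $\tau_1$ and $\tau_2$ are balanced $\omega$--tangles glued along the standard $\omega$ marked points, $\Delta$ meets $\tau_1\tau_2$ transversally and coherently (all intersections of the same sign) in exactly $\omega$ points, so both the geometric and the algebraic intersection numbers of $\Delta$ with $\tau_1\tau_2$ equal $\omega$. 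Recalling that a balanced tangle is a braid exactly when it is isotopic, rel $\partial(D^2\times[0,1])$, to a tangle transverse to every fiber $D^2\times\{t\}$, I would fix an ambient isotopy $\phi$ of $D^2\times[0,1]$ (rel $\partial(D^2\times[0,1])$, hence fixing $\partial D^2\times\{1/2\}$ pointwise) carrying $\tau_1\tau_2$ to such a transverse braid $T$, and set $\Delta' = \phi(\Delta)$. Thus $\Delta'$ is a properly embedded disk meeting $T$ transversally in $\omega$ points, with $\partial\Delta' = \partial D^2\times\{1/2\}$, and $\phi$ identifies each $\tau_i$ with the portion of $T$ on the corresponding side of $\Delta'$.

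With this setup, it suffices to show that $\Delta'$ can be isotoped, within the complement of $T$ and through an ambient isotopy fixing $N(T)$, onto a fiber disk $D^2\times\{t_0\}$. Indeed, granting this, each of the two sub-cylinders cut off by $D^2\times\{t_0\}$ meets $T$ in a tangle transverse to all fibers, so both sides of $\Delta'$ are braids; transporting back by $\phi^{-1}$ then shows that $\tau_1$ and $\tau_2$ are themselves braids, which is the claim. Note the disk-straightening, not any local feature of the factors, is the real content here: a minimal-complexity position of $\tau_1\tau_2$ need not respect the separating disk $\Delta$, so one cannot simply read off a braided form of $\tau_i$ without first recovering a horizontal separating disk.

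For the straightening I would exploit that, because $T$ is a braid, the exterior $X = (D^2\times[0,1])\setminus N(T)$ is literally a product $I$--bundle, homeomorphic to $F\times[0,1]$ where $F$ is the disk with $\omega$ open subdisks removed and the fibers are the punctured disks $(D^2\times\{t\})\setminus N(T)$. The surface $\Delta'\cap X$ is an $\omega$--punctured disk whose outer boundary $\partial D^2\times\{1/2\}$ is a horizontal curve in $\partial F\times[0,1]$. Because $\Delta'$ meets $T$ coherently in the minimal (winding) number $\omega$ of points, a routine innermost-disk and outermost-arc argument shows $\Delta'\cap X$ is incompressible and $\partial$--incompressible in $X$; the classification of incompressible, $\partial$--incompressible surfaces in a product $I$--bundle then forces it to be isotopic to a horizontal or a vertical surface, and its topology together with its horizontal outer boundary rules out the vertical case, so $\Delta'$ is isotopic to a fiber $F\times\{t_0\}$. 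This is exactly the disk-straightening underlying Lemma~\ref{lem:braidedtangleclosure}, which I would invoke or reprove here. The main obstacle is precisely this crux: one must check carefully that the minimal coherent intersection with $T$ yields incompressibility and $\partial$--incompressibility (so that no puncture-parallel compressions or boundary fold-backs are missed), and then apply the $I$--bundle classification so that $\Delta'$ lands on a genuine fiber rather than merely on some other horizontal configuration; once $\Delta'$ is a fiber, the reduction above finishes the proof.
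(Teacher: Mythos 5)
Your proposal is correct and follows essentially the same route as the paper: both proofs pass to the exterior $X \cong P \times [0,1]$ of the braid $\tau_1\tau_2$, note that each strand meets the separating disk exactly once so the punctured disk $D' = D \cap X$ is an $\omega$--punctured disk with coherent meridional boundary, isotope $D'$ onto a fiber, and then restrict the fibration (equivalently, transversality to the $D^2$--fibers) to each side to conclude $\tau_1$ and $\tau_2$ are braids --- where the paper deduces the straightening tersely from $D'$ being homeomorphic and homologous to a fiber $P_t$, you justify the identical step via incompressibility, $\partial$--incompressibility, and the horizontal/vertical classification of essential surfaces in $I$--bundles, which is just a more explicit rendering of the same argument. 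One small caution: you should not literally invoke Lemma~\ref{lem:braidedtangleclosure} here, since in the paper that lemma's proof depends on the present one, but this is harmless because you supply the disk-straightening argument directly.
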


\begin{proof}
Let $D$ be the disk in $D^2 \times I$ that separates the balanced $\omega$--tangle $\tau_1 \tau_2$ into the two balanced 
$\omega$--tangles $\tau_1$ and $\tau_2$.  
Since the product $\tau_1 \tau_2$ is an $\omega$--braid, 
its exterior $X=(D^2 \times I) - \nbhd(\tau_1\tau_2)$ fibers over the interval $[0,1]$ 
with fibers $P_t = P \times \{t\}$ where $P$ is a planar surface with $\omega +1$ boundary components.  
The boundary of $P_t$ has one component as an essential curve in the annulus $\bdry D^2 \times I$ and each of the other components is a meridian of its own strand of $\tau_1\tau_2$.  
Since $\bdry D$ is an essential curve in $\bdry D^2 \times I$ and each strand of $\tau_1\tau_2$ intersects $D$ exactly once transversally, 
the punctured disk $D' = D \cap X$ is both homeomorphic to a fiber $P_t$ and homologous in $X$ to the fiber $P_t$ for some $t \in (0,1)$.  
Therefore $D'$ is isotopic in $X$ (keeping $\bdry D'$ in $\bdry P \times I$) to $P_t$.  
Hence the fibration of $X$ on each side of $D'$ pulls back to fibrations on the exteriors of $\tau_1$ and $\tau_2$. 
Thus both $\tau_1$ and $\tau_2$ are $\omega$--braids.
\end{proof}

\begin{lemma}
\label{lem:braidedtangleclosure}
Let $\tau$ be a balanced $\omega$--tangle.  
If its closure $\widehat{\tau}$ in the solid torus is a closed $\omega$--braid, 
then $\tau$ is an $\omega$--braid.
\end{lemma}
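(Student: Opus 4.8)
The statement asserts that if the closure $\widehat{\tau}$ of a balanced $\omega$-tangle $\tau$ in the solid torus is a closed $\omega$-braid, then $\tau$ itself is an $\omega$-braid. This is the "closure" analogue of Lemma~\ref{lem:braidedtangleproduct}, and I expect the proof to run along the same fibration-theoretic lines.

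Let me recall the setup. A balanced $\omega$-tangle lives in $D^2 \times I$ with $\omega$ endpoints on each of $D^2 \times \{0\}$ and $D^2 \times \{1\}$. Its closure $\widehat{\tau}$ in the solid torus is formed by gluing $D^2 \times \{0\}$ to $D^2 \times \{1\}$ (getting $D^2 \times S^1$) and joining endpoints accordingly. "Being a closed $\omega$-braid" means $\widehat{\tau}$ is transverse to each disk fiber $D^2 \times \{pt\}$ of the solid torus, intersecting each in exactly $\omega$ points.

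The key tool in the companion lemma was: the exterior of a braid in $D^2 \times I$ fibers over $I$ with planar fibers, and the separating disk $D$ can be isotoped to a fiber, pulling back the fibration to each side. I want to do the analogous thing here.

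**The plan.** The strategy is to transfer the fibration on the solid torus exterior to a fibration on $D^2 \times I$ minus the tangle, with the "cut" disk playing the role it played before.

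First I would set $V = D^2 \times S^1$ for the solid torus and form the exterior $X = V - \nbhd(\widehat{\tau})$. Since $\widehat{\tau}$ is a closed $\omega$-braid, $X$ fibers over $S^1$, with fiber a planar surface $P$ having $\omega + 1$ boundary components: one on $\bdry D^2 \times S^1$ and one meridian on each strand. This is the standard "braid $\Leftrightarrow$ fibered complement with planar fibers transverse to the disks" picture.

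Next, let $D_0 = D^2 \times \{0\}$ be the disk along which $D^2 \times I$ was glued up to form the solid torus; it is the disk that separates the closure back into the tangle $\tau$. Restrict to the exterior: let $D_0' = D_0 \cap X$, a planar surface. Because each strand of $\widehat{\tau}$ crosses $D_0$ exactly once transversally (the balanced tangle has $\omega$ endpoints on the gluing disk) and $\bdry D_0$ is an essential curve in the boundary torus $\bdry D^2 \times S^1$, the disk $D_0'$ is homeomorphic to a fiber and homologous in $X$ to a fiber $P_t$.

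The main obstacle — and the crux of the argument — is showing $D_0'$ is isotopic in $X$ to a fiber $P_t$, so that cutting $X$ along $D_0'$ yields the exterior of the tangle $\tau$ with a pulled-back fibration over $I$. I expect this to follow exactly as in Lemma~\ref{lem:braidedtangleproduct}: an incompressible surface homologous to a fiber in a surface bundle over $S^1$ (or over $I$ after cutting), meeting the boundary in the right curves, is isotopic to a fiber; this is a standard consequence of the structure of fibered $3$-manifolds (Stallings). The care needed is that here the base is $S^1$ rather than $I$, but cutting $V$ along $D_0$ returns us precisely to $D^2 \times I$, and the fibration of $X$ over $S^1$ restricts — once $D_0'$ is isotoped to a fiber — to a fibration over $I$ of the exterior of $\tau$ in $D^2 \times I$. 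That fibration with planar fibers transverse to the product disks exactly says $\tau$ is an $\omega$-braid.

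\begin{proof}
Form the solid torus $V = D^2 \times S^1$ as the result of gluing $D^2 \times \{0\}$ to $D^2 \times \{1\}$ in $D^2 \times I$, and let $X = V - \nbhd(\widehat{\tau})$ be the exterior of the closure. Since $\widehat{\tau}$ is a closed $\omega$--braid, $X$ fibers over $S^1$ with fiber a planar surface $P$ having $\omega+1$ boundary components: one essential curve in $\bdry D^2 \times S^1$ and one meridian on each of the $\omega$ strands.

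Let $D_0$ be the image of $D^2 \times \{0\}$ in $V$; this is the disk that recovers the tangle $\tau$ upon cutting $V$ along it. Set $D_0' = D_0 \cap X$. Since $\tau$ is balanced, each strand of $\widehat{\tau}$ meets $D_0$ exactly once transversally, and $\bdry D_0$ is an essential curve in $\bdry D^2 \times S^1$. Hence $D_0'$ is a planar surface homeomorphic to the fiber $P$ and homologous in $X$ to a fiber $P_t$, with $\bdry D_0'$ consisting of one essential curve in $\bdry D^2 \times S^1$ together with one meridian of each strand.

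As in the proof of Lemma~\ref{lem:braidedtangleproduct}, because $D_0'$ is homologous to the fiber of the fibration $X \to S^1$ and meets the boundary in the corresponding curves, it is isotopic in $X$ (keeping $\bdry D_0'$ in $\bdry P \times S^1$) to a fiber $P_t$. Cutting $X$ along $D_0'$ therefore yields the exterior of $\tau$ in $D^2 \times I$ together with a pulled--back fibration over $I$ whose fibers are planar and transverse to the product disks $D^2 \times \{s\}$. Thus $\tau$ is an $\omega$--braid.
\end{proof}
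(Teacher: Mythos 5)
There is a genuine gap at the crux of your argument. Your proof hinges on the assertion that the punctured gluing disk $D_0'$, being homeomorphic and homologous to a fiber of the fibration $X \to S^1$, is isotopic to a fiber ``as in the proof of Lemma~\ref{lem:braidedtangleproduct}.'' But that lemma's argument lives in a fibration over the \emph{interval}: there $X$ is a product $P \times I$, and a properly embedded surface homologous to a fiber with the right boundary is readily seen to be horizontal. Over $S^1$ the analogous statement is substantively harder and does not follow from the product case by analogy. First, it is simply false without incompressibility, and you never verify that $D_0'$ is incompressible (it is true here, but it requires an argument: since $\widehat{\tau}$ is a closed $\omega$--braid it has winding number $\omega$ in the solid torus, so its $\omega$ geometric intersections with the meridional disk $D_0$ are all coherently signed, whence any essential curve on $D_0'$ is a nonzero sum of strand meridians in $H_1(X)$ and bounds no disk). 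Second, even granting incompressibility, the implication ``incompressible and homologous to the fiber of a bundle over $S^1$ $\Rightarrow$ isotopic to a fiber'' is a genuine theorem -- essentially Thurston's uniqueness of the norm-minimizing surface in a fibered class (or a Stallings-type argument) -- which you neither prove nor cite; the terse isotopy step inside Lemma~\ref{lem:braidedtangleproduct} cannot be quoted for it, precisely because there the base is $I$, as your own preamble flags but your proof then elides.

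The paper's proof is structured to avoid this issue entirely. It keeps two disks distinct: a disk fiber $D_0$ of the product structure in which $\widehat{\tau}$ is braided (which splits $\widehat{\tau}$ into a braid $\tau'$), and the meridional disk $D$ that splits $\widehat{\tau}$ back into the given tangle $\tau$. It then isotopes $D$ to $D_0$ by elementary innermost-disk arguments, invoking Lemma~\ref{lem:braidedtangleproduct} along the way (once $D$ and $D_0$ are disjoint, they cobound two product pieces whose tangles multiply to the braid $\tau'$, so each piece is a braid and hence so is $\tau$). Your route can be repaired -- either by supplying incompressibility plus the Thurston uniqueness theorem for fibered classes, or by first cutting $X$ along an honest fiber and then running an intersection argument between $D_0'$ and that fiber, which in effect reproduces the paper's proof -- but as written the central isotopy claim is unsupported.
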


\begin{proof}
As a closed $\omega$--braid in the solid torus $D^2 \times S^1$, 
$\widehat{\tau}$ intersects each disk $D_t = D^2 \times \{t\}$, 
$t \in S^1$, in $\omega$ points.  
Then $D_0$ splits $\widehat{\tau}$ into an $\omega$--braid $\tau'$ in $D^2 \times I$.  
So if $D$ is the meridional disk in $D^2 \times S^1$ that splits open $\widehat{\tau}$ into the tangle $\tau$, 
it must have essential boundary and intersect $\widehat{\tau}$ in $\omega$ points.  
By standard innermost disk arguments with applications of Lemma~\ref{lem:braidedtangleproduct}, 
while maintaining its essential boundary and number of intersections with $\widehat{\tau}$, 
one may isotope $D$ to first be disjoint from $D_0$ and then to be $D_0$.  
Thus the $\omega$--tangle $\tau$ is homeomorphic to the $\omega$--braid $\tau'$. 
\end{proof}

\section{L-space knots in twist families}
\label{L-space_knot_family}

\subsection{Twist families of L-space knots and their limits} 
\label{limits}

Twisting a knot $K$ along an unknot $c$ also twists the slopes in $\bdry N(K)$. 
Using the standard parameterization of slopes on null-homologous knots as the extended rational numbers $\Q \cup \{\infty\}$,  
a slope $r$ for $K=K_0$ twists to a slope $r_n = r_0 + n \omega^2$ for $K_n$ where $\omega = \wind_c(K)$.
Choosing a slope $r$ for $K$ thus enhances the twist family of knots $\{K_n\}$ to a twist family of knot-slope pairs $\{(K_n,\, r_n)\}$ and hence a family of surgered $3$--manifolds $\{K_n(r_n)\}$.  
We call a knot-slope pair $(K,\, r)$ an {\em L-space surgery} if $K(r)$ is an L-space.

Observe that $K_n(r_n) = (K \cup c)(r,\, -1/n)$.  
So let us consider the manifold $Y_r = (K \cup c)(r,\, \emptyset) = K(r) -\nbhd(c)$, 
the exterior of $c$ in the $r$--surgery on $K$.  
Furthermore, let us retain the parameterization of slopes in $\bdry Y_r$ from its identification as $\bdry N(c)$.   
Then $K_n(r_n) = Y_r (-1/n)$ and $K_{\infty}(r_{\infty}) = Y_r(0)$.

\begin{lemma}
\label{lem:triad}
Assume $r>0$. If $K_{\infty}(r_{\infty})$ and $K_n(r_n)$ are L-spaces, 
then so is $K_{n+1}(r_{n+1})$. 
\end{lemma}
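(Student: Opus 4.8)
The plan is to use the surgery exact triangle in Heegaard Floer homology relating the three fillings of $Y_r$ along the slopes $\infty$, $0$, and $-1/n$ (for the appropriate framing), since these slopes are pairwise ``adjacent'' in the Farey sense and thus form a triad. First I would recall that for a triad of slopes on the torus boundary of a manifold $Y$, the corresponding surgered manifolds $Y(\gamma_0), Y(\gamma_1), Y(\gamma_2)$ fit into an exact triangle in $\widehat{\mathrm{HF}}$, and that this gives the rank inequality
\[
\rk \HF(Y(\gamma_1)) \le \rk \HF(Y(\gamma_0)) + \rk \HF(Y(\gamma_2)).
\]
The slopes $-1/n$, $-1/(n+1)$, and $0$ (thought of in $\bdry Y_r$) are mutually distance-one in the Farey graph, so I would arrange the triad so that the term I want to bound, $K_{n+1}(r_{n+1}) = Y_r(-1/(n+1))$, sits as the apex, with $Y_r(-1/n) = K_n(r_n)$ and $Y_r(0) = K_\infty(r_\infty)$ as the other two vertices.

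The key steps, in order, are: (1) verify that $\{-1/n, -1/(n+1), 0\}$ forms a Farey triad so the surgery triangle applies; (2) check the homological/orientation bookkeeping so that the exact triangle produces an inequality with $K_{n+1}(r_{n+1})$ on the small side — this is where the sign condition $r>0$ and the direction of twisting enter, ensuring the slopes line up as a genuine triad rather than forcing a reindexing; (3) invoke the hypothesis that both $K_\infty(r_\infty) = Y_r(0)$ and $K_n(r_n) = Y_r(-1/n)$ are L-spaces, so each is a rational homology sphere realizing the minimal rank $\rk \HF = |H_1|$; and (4) combine these with the exact triangle to bound $\rk \HF(K_{n+1}(r_{n+1}))$ from above by $|H_1(K_{n+1}(r_{n+1}))|$, which, together with the universal lower bound $\rk \HF(Y) \ge |H_1(Y)|$ for a rational homology sphere, forces equality and hence the L-space conclusion. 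I would also confirm that $K_{n+1}(r_{n+1})$ is itself a rational homology sphere, which follows since $r_{n+1} \neq 0$ (as $r>0$ gives $r_n = r + n\omega^2$, so $r_{n+1}$ is a non-zero, non-meridional slope and the surgery is on a null-homologous knot).

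The main obstacle I expect is step (2): getting the triad oriented correctly so that the apex of the exact triangle is precisely $K_{n+1}(r_{n+1})$ and not one of the other two manifolds. In a surgery triangle one generally only learns that the rank at \emph{one} vertex is bounded by the sum of the ranks at the other two, and which vertex plays that role is dictated by the mutual slopes and their intersection numbers. I would need to confirm that as $n$ increases (consistent with $r>0$ and positive twisting), the slope $-1/(n+1)$ is the ``sum'' of $-1/n$ and $0$ in the triad so that $K_{n+1}(r_{n+1})$ genuinely receives the upper bound; this is a delicate orientation check. Once the triad is correctly set up, the rest is the standard argument that an exact triangle whose two input vertices are L-spaces forces the third to be an L-space provided it is a rational homology sphere.
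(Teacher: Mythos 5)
Your skeleton matches the paper's proof in spirit: the paper simply invokes \cite[Proposition~2.1]{OS_lens}, which is exactly the packaged exact-triangle argument you sketch (a triad in which two vertices are L-spaces and the orders of $H_1$ add forces the third vertex to be an L-space). But your proposal has a genuine gap at the crux. Step (4) asserts that the triangle bounds $\rk \HF(K_{n+1}(r_{n+1}))$ above by $|H_1(K_{n+1}(r_{n+1}))|$; for that you need the additivity
\begin{equation*}
|H_1(K_{n+1}(r_{n+1}))| \;=\; |H_1(K_n(r_n))| + |H_1(K_{\infty}(r_{\infty}))|,
\end{equation*}
and you never verify it nor even identify it as the thing to check. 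This is precisely where the paper does its work: writing $r=p/q$ with $p,q>0$ coprime, a presentation of $H_1$ of the surgered manifolds gives $|H_1(K_{\infty}(r_{\infty}))| = \omega^2 q$, $|H_1(K_n(r_n))| = p+n\omega^2 q$, and $|H_1(K_{n+1}(r_{n+1}))| = p+(n+1)\omega^2 q$, so the absolute values add exactly because $p>0$ (and $n \geq 0$, which is how the lemma is used in the induction for Theorem~\ref{thm:limit}). This arithmetic is the \emph{only} place the hypothesis $r>0$ enters; if $p<0$ one can have $|p+(n+1)\omega^2 q| < |p+n\omega^2 q| + \omega^2 q$ and the argument collapses.

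Relatedly, your step (2) misdiagnoses both issues it raises. The slopes $0$, $-1/n$, $-1/(n+1)$ live in $\bdry Y_r = \bdry N(c)$, where $r$ has already been filled into $\bdry N(K)$; they are pairwise distance one and form a triad regardless of the sign of $r$, so no orientation condition on $r$ is relevant to the Farey picture. And the worry about which vertex is the ``apex'' is moot: exactness of the triangle at each of its three spots gives the rank inequality at \emph{every} vertex, i.e.\ $\rk\HF$ of any one of the three fillings is at most the sum of the ranks of the other two. So the delicate point you anticipated is vacuous, while the delicate point the proof actually needs --- the $H_1$ computation and additivity of orders --- is absent. The remaining ingredients you list ($\chi(\HF(Y)) = |H_1(Y)|$ giving the lower bound for rational homology spheres, and $r_{n+1} = r + (n+1)\omega^2 \neq 0$ ensuring $K_{n+1}(r_{n+1})$ is a rational homology sphere, with $\omega \neq 0$ forced by $K_\infty(r_\infty)$ being an L-space) are fine.
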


\begin{proof}
Let $(\mu_K,\, \lambda_K)$ be a preferred meridian-longitude pair of $K$, 
and $(\mu_c,\, \lambda_c)$ a preferred meridian-longitude pair of $c$. 
Then $\lambda_c = \omega \mu_K$, $\lambda_K = \omega \mu_c$, 
and it is easy to see that 
the manifold $Y_{p/q}(m/n)=(K\cup c)(p/q,\, m/n)$ obtained by $(p/q,\, m/n)$--surgery on $K \cup c$, 
where the pairs $p,q$ and $m,n$ are each coprime, 
has 	
\[H_1(Y_{p/q}(m/n);\ \mathbb{Z}) = 
\big\langle \mu_k,\ \mu_c\ |\ p\mu_K + \omega q \mu_c = 0,\ \omega n \mu_K + m \mu_c = 0 \big\rangle.\] 
Thus, taking positive coprime integers $p, q$ and $n\geq0$, 
we have $| H_1(K_{\infty}(r_{\infty});\ \mathbb{Z})| = \omega^2 q$,  
$|H_1(K_n(r_n);\ \mathbb{Z})| = p+n\omega^2q$, 
and 
$|H_1(K_{n+1}(r_{n+1});\ \mathbb{Z})| = p+(n+1)\omega^2q$. 
Therefore we have the following equality: 
\[|H_1(K_{n+1}(r_{n+1});\ \mathbb{Z})| =  |H_1(K_n(r_n);\ \mathbb{Z})| +  | H_1(K_{\infty}(r_{\infty});\ \mathbb{Z})|.\]
Since $K_n(r_n)$ and $K_{\infty}(r_{\infty})$ are L-spaces by assumption, 
\cite[Proposition~2.1]{OS_lens} shows that $K_{n+1}(r_{n+1})$ is also an L-space.
\end{proof}

\begin{lemma}
\label{lem:knots=>surgeries}
Assume that $K_n$ is an L-space knot for infinitely many integers $n$ and $c$ is neither split from $K$ nor a meridian of $K$. 
\begin{enumerate}
\item
There is a constant $N > 0$ such that the L-space knots $K_n$ with $n\geq N$ are positive L-space knots 
and those with $n \leq -N$ are negative L-space knots. 
\item
For any choice of slope $r \neq \infty$,  
there is a constant $N_r \geq 0$ such that for any L-space knot $K_n$ with $|n| \geq N_r$, 
$K_n(r_n) = K_n(r + n\omega^2)$ is an L-space. 
\end{enumerate}	
\end{lemma}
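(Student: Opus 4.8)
The plan is to first pin down the winding number, then read off the sign of each L-space knot from its Rasmussen invariant, and finally compare surgery slopes against Seifert genera. I would begin by observing that, apart from the at most two indices for which $K_n$ is unknotted, every L-space knot $K_n$ is a \emph{nontrivial} positive or negative L-space knot. Since infinitely many of the $K_n$ are L-space knots, the pigeonhole principle produces infinitely many of a single sign. A positive L-space knot is tight fibered, so either $\{K_n\}$ itself or its mirror $\{\overline{K_n}\} = \{(\overline{K})_{-n}\}$ contains infinitely many tight fibered knots; Corollary~\ref{cor:sqptwist}(2) then forces $c$ to link $K$ coherently. Because $c$ is neither split from $K$ nor a meridian, this yields $\omega = \eta \geq 2$, which is exactly the hypothesis needed to make the linear growth estimates effective.

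For assertion (1) I would invoke the lower bound of Proposition~\ref{prop:newlowerslicebound}. With $\eta = \omega$ it reads $s(K_n) \geq s(K_0) - 2\omega + n\omega(\omega-1)$, whose right-hand side tends to $+\infty$ as $n \to \infty$ since $\omega \geq 2$. A nontrivial positive L-space knot is strongly quasipositive, so $s = 2g_4 = 2g > 0$ (using \cite{Liv}), while its mirror, a negative L-space knot, has $s = -2g < 0$. Hence there is an $N$ so that every L-space knot $K_n$ with $n \geq N$ satisfies $s(K_n) > 0$ and is therefore positive. Applying the identical argument to the mirror family $\{\overline{K_n}\}$, whose winding number is again $\omega$, gives $s(K_n) \to -\infty$ as $n \to -\infty$, so the L-space knots with $n \leq -N$ are negative. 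This establishes (1).

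For assertion (2) I would combine this sign information with genus growth. Coherence gives $x([D]) = \omega - 1$, so Theorem~\ref{thm:twistgenera} supplies a constant $G$ with $2g(K_n) = 2G + n\omega(\omega-1)$ for all sufficiently large $n$. Since a positive L-space knot $K_n$ has $K_n(s)$ an L-space for every slope $s \geq 2g(K_n)-1$ \cite{OS_lens}, it suffices to check $r_n \geq 2g(K_n)-1$. As $r_n = r + n\omega^2$, one computes $r_n - \bigl(2g(K_n)-1\bigr) = (r - 2G + 1) + n\omega$, which tends to $+\infty$; thus for all large $n$ the surgery $K_n(r_n)$ is an L-space. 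The symmetric computation on the mirror family handles $n \to -\infty$, where $K_n$ is a negative L-space knot, the admissible slopes are $s \leq -(2g(K_n)-1)$, and the analogous difference $(r + 2G' - 1) + n\omega$ tends to $-\infty$. Taking $N_r$ to exceed both thresholds gives the claim.

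The main obstacle is the very first step, namely excluding $\omega = 0$. Because L-space knots satisfy $g = g_4$, Theorem~\ref{thm:slicegenera_seifertgenera} only delivers the dichotomy ``$\omega = 0$ or $c$ links $K$ coherently,'' and the winding-number-zero case (in which the genera stay bounded and the slopes do not twist) must be ruled out before any of the linear estimates apply. The pigeonhole-plus-mirroring route through Corollary~\ref{cor:sqptwist}(2) is precisely what closes this gap and upgrades the dichotomy to $\omega = \eta \geq 2$. Once coherence is secured, the remaining work is the routine slope-versus-genus bookkeeping sketched above, with the only genuine care being the consistent tracking of signs of $s$ and of surgery slopes under mirroring.
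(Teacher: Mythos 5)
Your proof is correct, and while it follows the paper's overall skeleton for assertion (2) — establish coherence so $\omega=\eta\geq 2$, then observe that the slopes $r_n = r + n\omega^2$ lie on a line of slope $\omega^2$ which eventually overtakes the line $2g(K_n)-1 = 2G-1+n\omega(\omega-1)$ of slope $\omega(\omega-1)$ (your computation $r_n - (2g(K_n)-1) = (r-2G+1)+n\omega$ is exactly the paper's Figure comparison) — your mechanism for assertion (1) is genuinely different. The paper runs everything through Theorem~\ref{thm:postwistquasipositive}: it first shows by contradiction (mirroring and applying that theorem to $\{\overline{K}_{-n}\}$) that infinitely many of the L-space knots are positive, then uses the theorem again to conclude that \emph{every} $K_n$ with $n\geq N^+$ is tight fibered, and finally separates positive from negative L-space knots by the sign of $\tau$. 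You instead use only Corollary~\ref{cor:sqptwist}(2) to get coherence and then let the linear growth of the Rasmussen invariant from Proposition~\ref{prop:newlowerslicebound} do the sign-sorting: once $\omega\geq 2$, $s(K_n)\to\pm\infty$ as $n\to\pm\infty$, so any L-space knot far enough out is forced to have the corresponding sign. This buys a uniform treatment of one-sided families — the paper must separately choose $N^-$ in the case that only finitely many L-space knots occur for $n<0$, whereas your bound applies to whatever L-space knots happen to exist at indices $n\leq -N$ — at the cost of proving less structural information (the paper's route additionally shows all $K_n$ with $n\geq N^+$ are tight fibered, which is reused in the proof of Theorem~\ref{thm:positivetwistLspaceknots}).

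Two citation-level points, neither affecting correctness. First, you attribute $s = 2g_4 = 2g$ for strongly quasipositive knots to \cite{Liv}, but Livingston's paper proves this for $\tau$; the sharp slice--Bennequin equality for the Rasmussen invariant on strongly quasipositive knots is true (Plamenevskaya, Shumakovitch; Livingston's formal argument also adapts) but is not among this paper's cited results, and you genuinely need it — the bound $s\leq 2g_4$ from \cite{Ras} alone would not give the strict inequality $s<0$ for nontrivial negative L-space knots that your sign-sorting requires. Second, since $r$ may be non-integral, the fact that a positive L-space knot yields L-spaces at all slopes $\geq 2g-1$ should be cited as \cite{OS_rational} (as the paper does) rather than \cite{OS_lens}.
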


Note that Theorem~\ref{thm:positivetwistLspaceknots} improves Lemma~\ref{lem:knots=>surgeries}.
However Lemma~\ref{lem:knots=>surgeries} is needed in our development of Theorems~\ref{thm:limit} and \ref{thm:positivetwistLspaceknots}.

\begin{proof}

We may assume, by taking mirrors if necessary, 
that $K_n$ is an L-space knot for infinitely many integers $n \ge 0$. 

\begin{claim*}
\label{infinitely_positive}
For infinitely many integers $n \ge 0$ these L-space knots are positive L-space knots.
\end{claim*}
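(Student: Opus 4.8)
The plan is to argue by contradiction. Suppose that only finitely many of the knots $K_n$ with $n \ge 0$ are positive L-space knots. Since by hypothesis infinitely many $K_n$ with $n \ge 0$ are L-space knots, and since every nontrivial L-space knot is either a positive or a negative L-space knot while $K_n$ is the unknot for at most finitely many $n$, it follows that infinitely many $K_n$ with $n \ge 0$ are \emph{negative} L-space knots. I will derive a contradiction from this.

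First I would pin down the winding number. For each negative L-space knot $K_n$, the mirror $\overline{K_n}$ is a positive L-space knot and hence a tight fibered (fibered, strongly quasipositive) knot. Thus the mirror family $\{\overline{K_n}\}$ contains infinitely many tight fibered knots, so Corollary~\ref{cor:sqptwist}(2) applies and tells us that $c$ links $K$ coherently. Because $c$ is neither split from $K$ nor a meridian of $K$, coherence forces $\omega = \eta \ge 2$, which is exactly the regime in which the lower bound of Proposition~\ref{prop:newlowerslicebound} is nontrivial.

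Next I would extract a sign constraint on the Rasmussen invariant. Each positive L-space knot $\overline{K_n}$ is strongly quasipositive, so $s(\overline{K_n}) = 2g_4(\overline{K_n}) = 2g(\overline{K_n}) \ge 0$; taking mirrors gives $s(K_n) = -s(\overline{K_n}) \le 0$ for the infinitely many negative L-space knots $K_n$ with $n \ge 0$. On the other hand, Proposition~\ref{prop:newlowerslicebound} with $m = 0$ yields $s(K_n) \ge s(K_0) - 2\eta + n\,\omega(\omega-1)$ for every $n \ge 0$, and since $\omega \ge 2$ the right-hand side tends to $+\infty$. Hence $s(K_n) > 0$ for all sufficiently large $n$, contradicting $s(K_n) \le 0$ for infinitely many $n \ge 0$. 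This contradiction shows that infinitely many $K_n$ with $n \ge 0$ must in fact be positive L-space knots.

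I expect the main obstacle to be the first step: the growth estimate of Proposition~\ref{prop:newlowerslicebound} is vacuous unless $\omega \ge 2$, so ruling out the degenerate winding numbers $\omega \in \{0,1\}$ is essential. Rather than treating these cases by hand, the plan routes through the tight fibered characterization of positive L-space knots together with Corollary~\ref{cor:sqptwist}(2), which simultaneously disposes of the winding number $0$ case (via the input from \cite{BM} built into that corollary) and the winding number $1$ case by forcing $\omega = \eta \ge 2$. A secondary point to verify carefully is the evaluation $s = 2g = 2g_4$ for strongly quasipositive knots, since this is what supplies the sign $s(K_n) \le 0$ on the negative L-space knots that clashes with the positive linear growth forced by Proposition~\ref{prop:newlowerslicebound}.
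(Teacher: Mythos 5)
Your proof is correct, but it takes a genuinely different route from the paper's. The paper disposes of the claim in two lines: assuming only finitely many of the $K_n$ \($n \ge 0$\) are positive L-space knots, infinitely many are negative L-space knots, so their mirrors $\overline{K_n} = \overline{K}_{-n}$ are positive L-space knots, hence tight fibered, at indices tending to $-\infty$ in the mirror family $\{\overline{K}_m\}$ --- and this directly contradicts the second bullet of Theorem~\ref{thm:postwistquasipositive} (``tight fibered for only finitely many $n < N$''), whose proof rests on the Murasugi-sum structure of Theorem~\ref{thm:Murasugi-sum} together with Rudolph's quasipositive-plumbing and Gabai's fibered-plumbing results. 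You instead get coherence and $\omega = \eta \ge 2$ from Corollary~\ref{cor:sqptwist}(2) (correctly routed through the mirror family, which also handles $\omega \in \{0,1\}$ as you note), and then play the linear growth of the Rasmussen invariant from Proposition~\ref{prop:newlowerslicebound} against the sign constraint $s(K_n) \le 0$ on negative L-space knots. This bypasses the fibered-surface uniqueness machinery entirely, at the cost of one input the paper never establishes: the sharpness $s = 2g_4 = 2g$ for strongly quasipositive knots. That fact is true and standard --- it follows from the slice--Bennequin-type bound $s \ge sl + 1$ for the Rasmussen invariant (Plamenevskaya, Shumakovitch) applied to a quasipositive braid representative with $sl = 2g-1$, combined with $s \le 2g_4$ from \cite{Ras} and $g_4 = g$ from \cite{Liv} --- but it needs a citation beyond the paper's bibliography; note also that you only need the weaker inequality $s(\overline{K_n}) \ge 0$, not the full equality. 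A small aesthetic point: since Theorem~\ref{thm:postwistquasipositive} is invoked later in the same lemma anyway (both in the claim's surroundings and immediately after it), the paper's shorter argument incurs no extra dependency, whereas your version would make the claim itself independent of the Murasugi-sum results --- a reasonable trade.
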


\begin{proof}
Assume to the contrary there are only finitely many $n \ge 0$ such that 
these L-space knots are positive L-space knots. 
Then we have a constant $N > 0$ such that $K_n$ is a negative L-space knot for infinitely many $n \ge N$. 
Then the mirror image $\overline{K_n} = \overline{K}_{-n}$ of $K_n$ is a positive L-space knot, 
and hence a tight fibered knot, 
for infinitely many $n \ge N$. 
This contradicts Theorem~\ref{thm:postwistquasipositive} (for the twist family $\{ \overline{K}_{-n} \}$). 
\end{proof}

\smallskip

Therefore $K_n$ is a tight fibered knot for infinitely many integers $n$. 
Then Theorem~\ref{thm:postwistquasipositive} shows that 
there is a constant $N^+ \ge 0$ such that 
 $K_n$ is a tight fibered knot for $n \ge N^+$. 
Hence, an L-space knot $K_n$ with $n \ge N^+$ is tight fibered, 
and $\tau(K_n) > 0$ or $K_n$ is the unknot. 
Thus $K_n$ is a positive L-space knot for $n \ge N^+$. 
(A nontrivial negative L-space knot has a negative $\tau$--invariant.)

Since positive L-space knots are tight fibered knots, 
Theorem~\ref{thm:postwistquasipositive} implies the twist family is coherent with winding number $\omega \geq 2$. 
Hence in Theorem~\ref{thm:twistgenera} we may take $x([D])=\omega-1 \geq 1$ so that for some constant $G$ we have 
$2g(K_n) -1 = 2G -1 + \omega(\omega - 1)n$ for $n > N'$ for some constant $N' \ge  N^+$. 
This means that for these $n > N'$ the points $(n,\, 2g(K_n) -1)$ lie on the line $y =  \omega(\omega - 1)x +  2G -1$.  
On the other hand, since $r_n = r + \omega^2 n$, 
the points $(n,\, r_n)$ lie on the line $y = \omega^2 x + r$. 
Since $\omega \ge 2$, 
the slope $\omega(\omega - 1)$ is smaller than the slope $\omega^2$, 
and hence there is a constant $N_r^+ \ge N'$ such that $r_n \ge 2g(K_n) -1$ for $n \ge N_r^+$; 
see Figure~\ref{genus_slope_surgery_slope}.  
It then follows from \cite{OS_rational} that $(K_n, r_n)$ is a positive L-space surgery 
for any L-space knot $K_n$ with $n \ge N_r^+$. 
	
If $\{K_n\}$ also contains infinitely many L-space knots $K_n$ for $n < 0$, 
a similar argument (applied to the mirrored twist family) produces a constant $N^-$ 
so that $K_n$ is a negative L-space knot for any L-space knot $K_n$ with $n \leq -N^-$ and 
a constant $N_r^- \geq 0$ so that $(K_n,\, r_n)$ is a negative L-space surgery for any L-space knot with $n \leq -N_r^-$.  
(Note that $r_n < -(2g(K_n)-1) \Leftrightarrow -(r_n) > 2g(\overline{K_n}) -1 \Leftrightarrow (-r)_{-n} > 2 g(\overline{K}_{-n})-1$.  
The knot $\overline{K_n}$  is the mirror of $K_n$, the result of $n$ twists of $K$ along $c$; it is equal to the knot $\overline{K}_{-n}$ which is the result of $-n$ twists of $\overline{K}$ along $\overline{c}$.) 
If $\{K_n\}$ only contains finitely many L-space knots $K_n$ for $n < 0$, 
choose $N^-$ so that $K_n$ is not an L-space knot if $n \leq -N^-$. 
Then the first assertion follows by choosing $N =   \max\{N^+,\, N^-\}$, 
and the second assertion follow by choosing $N_r = \max\{N_r^+,\, N_r^-\}$. 
\end{proof}
	
\begin{figure}[h]
\begin{center}
\includegraphics[width=0.6\textwidth]{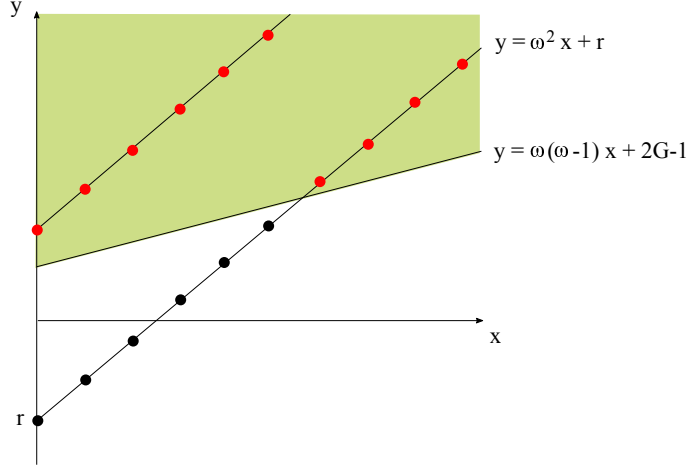}
\caption{The slope $\omega(\omega-1)$ is strictly smaller than $\omega^2$.}
\label{genus_slope_surgery_slope}
\end{center}
\end{figure}

Let $X$ be a connected, compact, oriented $3$--manifold where $\bdry X$ is a single torus.  
Let $\mathcal{L}(X)$ denote the subset of slopes $\alpha$ in $\bdry X$ such that $X(\alpha)$ is an L-space.  
A primitive element in $H_1(\partial X)$ is a {\em rational longitude} if it represents a torsion element when considered as an element of $H_1(X)$. 
Note that such an element is unique up to sign, 
and the resulting manifold obtained by Dehn filling along the rational longitude has the infinite first homology group. 

\begin{theorem}[{\cite[Proposition~1.3 and Theorem~1.6]{RR}}]
\label{thm:RRslopes}
The subset $\mathcal{L}(X)$ is either empty, a single slope, a closed interval of slopes, or the complement of the rational longitude.
\end{theorem}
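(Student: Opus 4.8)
The plan is to prove the result by showing that $\mathcal{L}(X)$ is \emph{convex} among the slopes on $\partial X$, with the four listed cases distinguished by how large this convex set is and whether it abuts the rational longitude. I identify slopes with $\Q \cup \{\infty\}$ inside the circle of slopes of $\partial X$ via a basis of $H_1(\partial X)$, write $\lambda_0$ for the rational longitude, and write $\Delta(\cdot,\cdot)$ for the geometric intersection number (Farey distance) of two slopes. Two preliminary observations organize everything. First, $\lambda_0 \notin \mathcal{L}(X)$ always: Dehn filling $X$ along $\lambda_0$ yields a manifold with $b_1 > 0$, which is not a rational homology sphere and hence not an L-space. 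This already explains why the maximal case is the \emph{complement} of $\lambda_0$ rather than all of $S^1$. Second, for every slope $\alpha \neq \lambda_0$ one has $|H_1(X(\alpha))| = c_X\,\Delta(\alpha,\lambda_0)$ for a fixed positive integer $c_X$; this is the homological input that makes the surgery exact triangle useful.

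The engine of the argument is a triad lemma generalizing Lemma~\ref{lem:triad}. For a Farey triad $\{\alpha,\beta,\gamma\}$ (pairwise Farey distance one), the surgery exact triangle in $\HF$ gives the subadditivity $\rk\,\HF(X(\gamma)) \le \rk\,\HF(X(\alpha)) + \rk\,\HF(X(\beta))$, while among the three intersection numbers with $\lambda_0$ the largest equals the sum of the other two. Labelling $\gamma$ as the slope farthest from $\lambda_0$, so that $|H_1(X(\gamma))| = |H_1(X(\alpha))| + |H_1(X(\beta))|$, I would argue exactly as in Lemma~\ref{lem:triad} via \cite[Proposition~2.1]{OS_lens}: if $X(\alpha)$ and $X(\beta)$ are L-spaces then $\rk\,\HF(X(\gamma)) \le |H_1(X(\gamma))| \le \rk\,\HF(X(\gamma))$, forcing equality, so $X(\gamma)$ is an L-space. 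The slope farthest from $\lambda_0$ in such a triad is precisely the Farey mediant of $\alpha$ and $\beta$ on the arc missing $\lambda_0$; thus the mediant of two neighboring L-space slopes is again an L-space slope.

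Convexity then follows in stages. If $\alpha,\beta \in \mathcal{L}(X)$ are Farey neighbors, every rational slope on the arc between them missing $\lambda_0$ is produced from $\alpha,\beta$ by iterated mediants (the Stern--Brocot subdivision), and a strong induction on Stern--Brocot depth---each slope being the mediant of two strictly shallower ancestors already shown to lie in $\mathcal{L}(X)$---places that entire arc in $\mathcal{L}(X)$. Granting that $\mathcal{L}(X)$ is convex, it is the set of rational slopes in a closed sub-arc of $S^1$ disjoint from $\lambda_0$, and the four cases fall out according to the size of this arc: empty (no L-space filling exists), a single slope (the arc degenerates to a point), a genuine closed interval bounded by two finite L-space slopes, or---when the convex set is as large as it can be---the complement of $\lambda_0$, which remains permanently excluded by the first observation.

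The main obstacle is twofold. The Stern--Brocot induction directly fills only the arc spanned by L-space slopes that are Farey \emph{neighbors}: two L-space slopes at larger Farey distance need not cobound a single triad, so no exact triangle is immediately available and convexity cannot be concluded by purely local mediant-filling. Bridging this gap---and, relatedly, proving the resulting interval is \emph{closed} in the sense that its finite rational endpoints are themselves L-space slopes---is where the machinery of \cite{RR} is essential: one controls the nonnegative defect $\rk\,\HF(X(\alpha)) - |H_1(X(\alpha))|$ globally as a function of the filling slope, using Turaev torsion to pin down the homological term together with a surgery/bordered formula, and shows this defect vanishes on precisely an interval. I expect this global convexity of the defect function, rather than the local triad step, to be the crux of the argument.
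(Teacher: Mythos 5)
Your proposal cannot be matched against an in-paper argument, because the paper does not prove this statement at all: it is imported verbatim from \cite{RR} (Proposition~1.3 and Theorem~1.6), and the only related argument the paper actually carries out is Lemma~\ref{lem:triad}, a special case of your triad step. That local step you do reproduce correctly: for a triad of slopes with $\gamma$ farthest from the rational longitude $\lambda_0$, the identity $|H_1(X(\alpha))| = c_X\,\Delta(\alpha,\lambda_0)$ gives $|H_1(X(\gamma))| = |H_1(X(\alpha))| + |H_1(X(\beta))|$, and combining the exact-triangle rank inequality with $\rk\,\HF(Y) \ge |H_1(Y;\Z)|$ and \cite[Proposition~2.1]{OS_lens} forces $X(\gamma)$ to be an L-space, exactly as in Lemma~\ref{lem:triad}. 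Your observation that $\lambda_0 \notin \mathcal{L}(X)$ because $X(\lambda_0)$ has $b_1 > 0$ is likewise correct and is noted in the paper just before the theorem statement.

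The genuine gap is the one you flag yourself, and it is fatal to the proposed route rather than a technicality to be patched: the mediant/Stern--Brocot mechanism only propagates L-space slopes between Farey \emph{neighbors}, and two L-space slopes at distance $d > 1$ need not even admit a common Farey neighbor to get the induction started. Concretely, for the slopes $\infty = 1/0$ and $2/5$, a common neighbor of $\infty$ must be an integer $u/1$, and $\Delta(u/1, 2/5) = |5u - 2| = 1$ has no integer solution; so no triad-based bootstrap connects such a pair, and convexity of $\mathcal{L}(X)$ cannot be derived locally. Nor does the triad step see whether endpoints of the resulting arc lie in $\mathcal{L}(X)$, i.e.\ closedness. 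Deferring precisely these points to ``the machinery of \cite{RR}'' means the proposal assumes the substance of the theorem it is meant to prove: in \cite{RR} the argument is structurally different, showing first that the existence of two distinct L-space filling slopes forces $X$ to be Floer simple, and then computing $\mathcal{L}(X)$ for Floer simple manifolds as a closed interval whose endpoints are determined by the Turaev torsion of $X$ (with the complement-of-$\lambda_0$ case occurring for generalized solid tori). So your write-up is a correct account of the easy local step and an accurate diagnosis of where the difficulty lies, but it is not a proof of the statement.
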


\begin{thm:limit}
Let $\{ K_n \}$ be a twist family of knots obtained by twisting $K$ along $c$.
\begin{enumerate}
\item If $K=K_0$ is an L-space knot with L-space surgery slope $r$ and $K_{\infty}$ is an L-space knot, then 
\begin{enumerate}
\item $K_n$ is a positive L-space knot for all $n \geq 0$ if $r > 0$ and
\item $K_n$ is a negative L-space knot for all $n \leq 0$ if $r < 0$. 
\end{enumerate}
\item If $K_n$ is an L-space knot for infinitely many integers $n$, then $K_{\infty}$ is an L-space knot.
\end{enumerate}
\end{thm:limit}

\begin{proof}
(1) Assume that $K(r) = K_0(r_0) = Y_r(\infty)$ is an L-space, 
where $r \in \mathbb{Q}$, 
and $K_{\infty}$ is an L-space knot in $S^1 \times S^2$. 
Since $r_{\infty}$ is not a meridional slope on $K_{\infty}$, 
by Remark~\ref{L-space_knot_S1xS2}(1) $K_{\infty}(r_{\infty}) = Y_r(0)$ is also an L-space.
If $ r> 0$, repeated applications of Lemma~\ref{lem:triad} shows that $K_n(r_n)$ is an L-space for all $n \geq 0$.  
Because $r_n = r+n \omega^2$, $r_n >0$ and so $K_n$ is a positive L-space knot for all $n \geq 0$.   
Similarly, if $r<0$ then Lemma~\ref{lem:triad} implies that $K_n$ is a negative L-space knot for all $n\leq 0$.

(2) Assume $K_n$ is an L-space knot for infinitely many integers $n$. 
Then \cite[Theorem~1.5]{BM} implies $\omega >0$. 
Furthermore, Lemma~\ref{lem:knots=>surgeries} implies that for any slope $r \neq \infty$ (where $r=p/q$ for coprime $p,q$ and $q>0$) there is a constant $N$ so that for these $n$ with $|n|\geq N$, $K_n(r_n)=Y_r(-1/n)$ is an L-space.  
Hence for infinitely many integers $n$, the set $\{-1/n\}$ is contained in $\mathcal{L}(Y_r)$.  
Therefore, since $0$ is a limit point of this infinite set, 
Theorem~\ref{thm:RRslopes} implies that either $0 \in \mathcal{L}(Y_r)$ or $0$ is the rational longitude.  
However it cannot be the latter because $Y_r(0)=K_{\infty}(r_{\infty})$ implies that we have $|H_1(Y_r(0))|=\omega^2 q < \infty$ as calculated in the proof of Lemma~\ref{lem:triad}.  
Hence $K_{\infty}(r_{\infty})=Y_r(0)$ is an L-space and $K_{\infty}$ is an L-space knot.
\end{proof}

\smallskip

\begin{thm:positivetwistLspaceknots}
	Let $\{ K_n \}$ be a twist family of knots obtained by twisting $K$ along $c$ such that $c$ is neither split from $K$ nor a meridian of $K$.  If $K_n$ is an L-space knot for infinitely many integers $n>0$ \(resp.\ $n < 0$\), then 
	\begin{itemize}
		\item $c$ links $K$ coherently and
		\item there is a constant $N$ such that $K_n$ is a positive \(resp. negative\) L-space knot for all integers $n \geq N$ \(resp. $n \leq N$\).
	\end{itemize}
\end{thm:positivetwistLspaceknots}

\begin{proof}
By the assumption $\{ K_n \}$ or its mirror $\{ \overline{K}_n \}$ contains infinitely many positive L-space knots. 
Since positive L-space knots are tight fibered knots, 
Corollary~\ref{cor:sqptwist}(2) shows that $c$ links $K$ coherently. 

Assume $K_n$ is an L-space knot for infinitely many integers $n>0$. (The argument for $n<0$ follows similarly.)
Then Theorem~\ref{thm:limit}(2) shows that $K_{\infty}$ is an L-space knot (in $S^1 \times S^2$).
Also, Lemma~\ref{lem:knots=>surgeries} implies that all but finitely many of these knots are positive L-space knots.  
In particular, $K_{N}$ is a positive L-space knot for some $N>0$.  
Hence by Theorem~\ref{thm:limit}(1)(a) $K_n$ is a positive L-space knot for all $n \geq N$. 
\end{proof}

\begin{question}
\label{question_not_coherent}
Assume that $c$ does not link $K$ coherently. 
Then is there any universal upper bound (perhaps in terms of the winding and wrapping numbers) for the number of L-space knots in the twist family $\{ K_n \}$? 
\end{question}

\subsection{Two-sided infinite twist families of L-space knots}

Let $\{ K_n \}$ be a twist family of knots for which $K_n$ is an L-space knot for infinitely many integers $n$. 
Theorem~\ref{thm:positivetwistLspaceknots} asserts that $c$ must link $K$ coherently. 
We say that $\{ K_n \}$ is a \textit{two-sided infinite family \(of L-space knots\)} if $K_n$ are L-space knots for infinitely many positive integers $n$ and simultaneously $K_n$ are L-space knots for infinitely many negative integers $n$;
otherwise $\{ K_n \}$ is a \textit{one-sided infinite family}. 

\begin{proposition}
\label{positive_negative}
Let $\{ K_n \}$ be a twist family of knots which contains infinitely many L-space knots. 
Then the following three conditions are equivalent. 
\begin{enumerate}
\item
$\{ K_n \}$ is a two-sided infinite family. 

\item
$\{ K_n \}$ contains a positive L-space knot and a negative L-space knot. 

\item
There are constants $N_+$ and $N_-$ 
such that $K_n$ is a positive L-space knot for all $n \ge N_+$ and 
$K_n$ is a negative L-space knot for all $n \le N_-$. 
\end{enumerate}
\end{proposition}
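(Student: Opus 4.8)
The plan is to prove the three conditions equivalent by establishing the cycle $(1)\Rightarrow(2)\Rightarrow(3)\Rightarrow(1)$, working under the hypotheses of Theorem~\ref{thm:positivetwistLspaceknots}, namely that $c$ is neither split from $K$ nor a meridian of $K$ (so that the family genuinely varies and the cited results apply). The implication $(3)\Rightarrow(1)$ is immediate: if $K_n$ is a positive L-space knot for all $n\ge N_+$ and a negative L-space knot for all $n\le N_-$, then L-space knots occur for infinitely many positive and infinitely many negative integers $n$, so the family is two-sided infinite. The substance is carried by the limit knot $K_\infty$ together with the propagation results of Theorem~\ref{thm:limit}.

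For $(1)\Rightarrow(2)$, suppose $\{K_n\}$ is two-sided infinite, so $K_n$ is an L-space knot for infinitely many $n>0$ and for infinitely many $n<0$. Applying Theorem~\ref{thm:positivetwistLspaceknots} to the positive side yields a constant $N_+$ with $K_n$ a positive L-space knot for all $n\ge N_+$, and applying it in its ``resp.'' form to the negative side yields a constant $N_-$ with $K_n$ a negative L-space knot for all $n\le N_-$. In particular $K_{N_+}$ is a positive L-space knot and $K_{N_-}$ is a negative L-space knot, which is exactly $(2)$.

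The heart of the proof is $(2)\Rightarrow(3)$. Suppose $K_a$ is a positive L-space knot and $K_b$ is a negative L-space knot. Since $\{K_n\}$ contains infinitely many L-space knots by hypothesis, Theorem~\ref{thm:limit}(2) shows that the limit knot $K_\infty\subset S^1\times S^2$ is an L-space knot. The key observation is that $K_\infty=\lim_{n\to\infty}K_n$ is unchanged when the twist family is reparametrized about a different base point: it is the image of $K$ under the longitudinal ($0$--) surgery on $c$, and $-1/n\to 0$ regardless of an index shift. Thus we may regard $\{K_{a+m}\}_m$ as a twist family based at $K_a$ with the same limit $K_\infty$, whose L-space surgery slope $r$ is positive precisely because $K_a$ is a positive L-space knot; Theorem~\ref{thm:limit}(1)(a) then gives that $K_n$ is a positive L-space knot for all $n\ge a$, and we set $N_+=a$. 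Symmetrically, basing at $K_b$, whose L-space surgery slope is negative, and invoking Theorem~\ref{thm:limit}(1)(b) shows $K_n$ is a negative L-space knot for all $n\le b$; we set $N_-=b$. This establishes $(3)$ and closes the cycle.

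The main obstacle is the step $(2)\Rightarrow(3)$, and within it two inputs must be handled with care. First, one must extract that $K_\infty$ is an L-space knot from only the existence of infinitely many L-space knots in the family, which is exactly the content of Theorem~\ref{thm:limit}(2). Second, one needs the invariance of the limit knot $K_\infty$ under reparametrization of the family; this is what permits Theorem~\ref{thm:limit}(1), stated for a family based at $K_0$, to be applied with $K_a$ or $K_b$ playing the role of the base knot. Once these are in hand, the propagation of positivity and negativity is automatic.
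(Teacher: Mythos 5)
Your proposal is correct and takes essentially the same approach as the paper: the paper proves the cycle $(1)\Rightarrow(3)\Rightarrow(2)\Rightarrow(1)$, using Theorem~\ref{thm:positivetwistLspaceknots} for $(1)\Rightarrow(3)$ and Theorem~\ref{thm:limit}(2) together with (1)(a) and (1)(b), reparametrized at a positive L-space knot $K_{p_0}$ and a negative one $K_{m_0}$, for $(2)\Rightarrow(1)$ --- which is precisely your $(2)\Rightarrow(3)$ argument with the stronger conclusion left implicit. Your explicit observations that $K_\infty$ is invariant under index shifts of the family and that one must assume $c$ is neither split from $K$ nor a meridian (without which $(1)\Rightarrow(2)$ fails, e.g.\ for a constant family of trefoils) are careful points the paper uses only tacitly.
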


\begin{proof}
$(1) \Rightarrow (3)$. 
Since $\{ K_n \}$ is a two-sided infinite family, 
it follows from Theorem~\ref{thm:positivetwistLspaceknots} that there is a constants $N_+$ and $N_-$ 
such that $K_n$ is a positive L-space knot for all $n \ge N_+$ and 
$K_n$ is a negative L-space knot for all $n \le N_-$, 
and the result follows.  

$(3) \Rightarrow (2)$. It is obvious. 

$(2) \Rightarrow (1)$. 
Since $\{K_n\}$ contains infinitely many L-space knots, 
Theorem~\ref{thm:limit}(2) shows that the limit $K_{\infty}$ is an L-space knot in $S^1 \times S^2$. 
Let $K_{p_0} \in \{ K_n \}$ be a positive L-space knot  and 
$K_{m_0} \in \{ K_n \}$ a negative L-space knot. 
Then $K_n$ is a positive L-space knot for all $n \ge p_0$, 
and $K_n$ is a negative L-space knot for all $n \le m_0$
by Theorem~\ref{thm:limit}(1)(a) and (b), respectively. 
Thus $\{ K_n \}$ is a two-sided infinite family. 
\end{proof}

As an immediate consequence of Theorem~\ref{thm:braidaxis}, we establish

\begin{corollary}
\label{cor:Lspacebraidaxis}
Let $\{K_n\}$ be a twist family that contains infinitely many L-space knots. 
If it is a two-sided infinite family, 
then the twisting circle $c$ is a braid axis for $K$ or $K \cup c$ is the unlink.
\end{corollary}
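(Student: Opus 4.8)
The plan is to reduce everything to Theorem~\ref{thm:braidaxis} by converting the two-sided L-space hypothesis into the pair of tight-fibered conditions that theorem demands, after first clearing away the degenerate configurations of $K \cup c$ that Theorem~\ref{thm:braidaxis} explicitly excludes.

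First I would dispose of the cases in which $c$ is split from $K$ or is a meridian of $K$. In either situation $\wrap_c(K) \leq 1$, so $K_n = K$ for every $n$, and the assumption that $\{K_n\}$ is a two-sided infinite family forces $K$ to be simultaneously a positive and a negative L-space knot. Since the unknot is the only knot with this property, $K$ must be the unknot. If $c$ is split from $K$, then $K \cup c$ is the split union of two unknots, i.e.\ the unlink, giving the second alternative of the conclusion. If instead $c$ is a meridian of $K$, then $\wrap_c(K) = 1$ and $K$ is a $1$--braid about $c$, so $c$ is a braid axis for $K$, giving the first alternative.

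In the remaining case $c$ is neither split from $K$ nor a meridian of $K$, and I would invoke Proposition~\ref{positive_negative}. As $\{K_n\}$ contains infinitely many L-space knots and is two-sided, condition~(3) of that proposition supplies constants $N_+$ and $N_-$ so that $K_n$ is a positive L-space knot for all $n \geq N_+$ and a negative L-space knot for all $n \leq N_-$. Because positive L-space knots are tight fibered, $K_n$ is tight fibered for every $n \geq N_+$. Moreover, since $K_n$ being a negative L-space knot makes its mirror $\overline{K_n}$ a positive L-space knot, $\overline{K_n}$ is tight fibered for every $n \leq N_-$; rewriting with $-n$ in place of $n$, this says $\overline{K_{-n}}$ is tight fibered whenever $-n \leq N_-$, that is, for all sufficiently large $n$. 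Hence for all sufficiently large $n$ both $K_n$ and $\overline{K_{-n}}$ are tight fibered, and Theorem~\ref{thm:braidaxis} concludes that $c$ is a braid axis for $K$.

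The proof is essentially a direct hand-off to Theorem~\ref{thm:braidaxis}, so there is no serious obstacle; the only step requiring care is the bookkeeping in the degenerate cases, where one must recognize that the two-sided hypothesis pins $K$ down to the unknot through the fact that the unknot is the unique knot that is both a positive and a negative L-space knot, and then correctly match the split outcome to the unlink and the meridian outcome to a (trivial) braid axis.
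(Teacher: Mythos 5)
Your proof is correct and takes essentially the same approach as the paper's: Proposition~\ref{positive_negative} converts two-sidedness into positive (resp.\ negative) L-space knots for all sufficiently positive (resp.\ negative) $n$, tight fiberedness of positive L-space knots and their mirrors feeds into Theorem~\ref{thm:braidaxis}, and the meridian/split cases are settled by the fact that only the unknot is both a positive and a negative L-space knot. The only difference is organizational — you clear the degenerate cases before invoking Theorem~\ref{thm:braidaxis}, while the paper invokes it first and reads the meridian/split alternatives off the failure of its hypothesis — which is a harmless (arguably cleaner) reordering of the same argument.
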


\begin{proof}
Since the mirrors of negative L-space knots are positive L-space knots, 
following Proposition~\ref{positive_negative}, 
$K_n$ and $\overline{K_{-n}}$ are a positive L-space knots for sufficiently large $n$. 

Since positive L-space knots are tight fibered, 
either $c$ is a braid axis of $K$, 
a meridian of $K$ or split from $K$ by Theorem~\ref{thm:braidaxis}.  
In the last two situations, $K_n = K$ for all $n$.  
Since the unknot is the only knot that is both a positive and a negative L-space knot, 
$K$ must be the unknot. 
If $c$ is a meridian of $K$, $c$ is a braid axis, 
and if $c$ is split from $K$, $K \cup c$ is the unlink.  
\end{proof}

It seems plausible that the converse of Corollary~\ref{cor:Lspacebraidaxis} holds. 
\begin{question}
\label{braid=>two-sided}
Let $\{ K_n \}$ be a twist family of knots with infinitely many L-space knots. 
If $c$ is a braid axis, then is $\{ K_n \}$ a two-sided infinite family?
\end{question}

\begin{example}
\label{P-231}
Let $K$ be the pretzel knot $P(-2, 3, 1) = T_{5, 2}$.  
Then $K_n$ is an L-space knot for $n \ge 0$ \cite{OS_lens}. 
On the other hand, 
following Lidman-Moore \cite[Theorem~2.1]{LM} $K_n$ is not an L-space knot if $n < 0$. 
In this example, 
$c$ is not a braid axis and $\{ K_n \}$ is a one-sided infinite twist family.  
\end{example}

\begin{figure}[h]
\begin{center}
\includegraphics[width=1.0\textwidth]{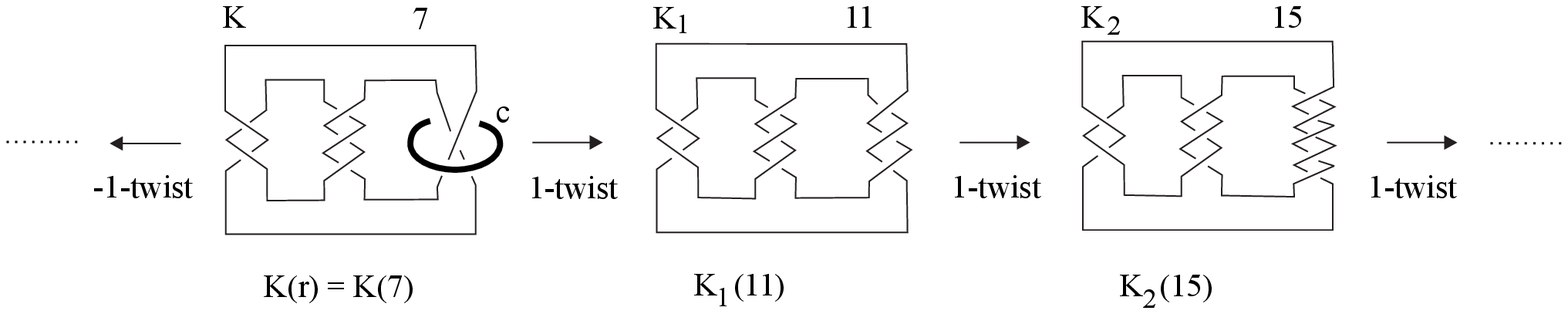}
\caption{$K_n$ is an L-space knot if and only if $n \ge 0$. }
\label{twist_non_braid}
\end{center}
\end{figure}

\begin{example}
All torus knots have twisting circles which produce two-sided infinite families of L-space knots 
and ``almost half'' of them have twisting circles which produce one-sided infinite families of L-space knots as well like the one in Example~\ref{P-231}. 
Take a torus knot $T_{p, q}$ which lies in a standardly embedded torus in $S^3$. 
Let $c_+$ and $c_-$ be unknots depicted in Figure~\ref{fig:torusknotseiferters}. 
Note that $T_{p, q} \cup c_+$ and $T_{-p, q} \cup c_-$ are mirror images of each other, 
$lk(T_{p, q}, c_+) = p + q$ and $lk(T_{p, q}, c_-) = p-q$.

\begin{figure}
\begin{center}
\includegraphics[width=\textwidth]{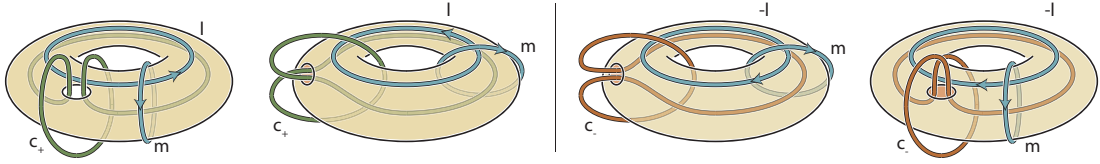}
\caption{The twisting circle $c_+$ for a $(p,q)$--torus knot is mirror equivalent to the twisting circle $c_-$ for a $(-p,q)$--torus knot.}
\label{fig:torusknotseiferters}
\end{center}
\end{figure}

We assume $2 \le q < p$ for simplicity. 
As shown in \cite[Proposition~6.3]{BM}, $c_+$ is a braid axis for $T_{p, q}$ and the knot $T^+_{p, q, n}$ obtained from $T_{p, q}$ 
by $n$--twist along $c_+$ is an L-space knot for all integers $n$. 
However, assume that $2 \le q < p < 2q$, and take $c_-$ instead of $c_+$. 
Then by \cite[Proposition~6.3]{BM} $c_-$ is not a braid axis of $T_{p, q}$, 
and the knot $T^-_{p, q, n}$ obtained from $T_{p, q}$ 
by $n$--twist along $c_-$ is an L-space knot for $n \ge -1$. 
Now Corollary~\ref{cor:Lspacebraidaxis} allows us to conclude that 
$T^-_{p,q,n}$ is an L-space knot for only finitely many $n \leq -2$.  
Hence $\{T^-_{p,q,n}\}$ is one-sided twist family. 
(Note that $K_2 \cup c$ in Example~\ref{P-231} is isotopic to $T_{5, 3} \cup c_-$.) 
\end{example} 

\begin{question}
\label{question_braid}
If $\{K_n\}$ is a twist family that contains a nontrivial positive L-space knot and a nontrivial negative L-space knot, 
then does it contain infinitely many L-space knots? 
\end{question}

\section{Satellite L-space knots}
\label{satellite}

Let $P(K)$ denote the \textit{satellite knot} with a \textit{companion knot} $K$ and \textit{pattern} $(V,\, P)$, 
where $P$ is a knot in the unknotted solid torus $V = S^3-\nbhd(c)$ for some unknot $c$. When we think of the pattern $P$ as a knot in $S^3$, we call $P$ a \textit{pattern knot}.  
In particular, 
$P(K)$ is the image of $P$ under the identification of $V$ containing $K$ with $N(K)$ so that the meridian of $c$ is identified with the preferred longitude of $K$.  
Further observe that the unknot $c$ linking with $P$ defines the twist family $\{P_n\}$ of pattern knots, 
inducing $n$--twisted satellites $P_n(K)$. 
In the following, we use the term satellite knot $P(K)$ to mean that the companion knot $K$ is a nontrivial knot and 
$P$ is neither embedded in a $3$--ball in $V$ nor a core of $V$, i.e.\ $P \cup c$ is neither the trivial link nor the Hopf link.

We say the satellite knot $P(K)$ is a {\em braided satellite} if $P$ is a closed braid in $V$, i.e.\ $c$ is a braid axis for $P$.  
In what follows we will observe that, assuming Conjecture~\ref{L-slopes},  
Theorem~\ref{braided pattern} shows that an L-space satellite knot of a non-trivial knot must be a braided satellite.  
Thus, under such an assumption, we affirmatively answer the first part of \cite[Question 22]{BakerMoore}.  
More broadly, without the need to assume the conjecture, 
together Proposition~\ref{positive_negative} and Corollary~\ref{cor:Lspacebraidaxis} affirmatively answer the first part of \cite[Question 1.8]{Hom-satellite}. 

\medskip

Let $X_K = S^3 - \nbhd(K)$ be the exterior of $K$, and let $V_{P} = S^3 - \nbhd(P \cup c)$ be the exterior of the link $P \cup c$.  
For a slope $\alpha$ in $\bdry N(P)$, 
let $V_P(\alpha)$ denote the Dehn filling of $V_P$ along that slope.
Then $P(K)(\alpha)$ is obtained by gluing $X_K$ and $V_P(\alpha)$ along their boundaries. 

Recall that for a $3$--manifold $M$ with torus boundary, 
$\mathcal{L}(M)$ is the set of slopes in $\bdry M$ along which Dehn filling yields an L-space. 
The interior of $\mathcal{L}(M)$ is denoted by $\mathcal{L}^{\circ}(M)$.  
Let $M_i$ \($i = 1, 2$\) be a $3$--manifold with a single torus boundary, 
and glue them along their boundaries via a homeomorphism $h : \partial M_1 \to \partial M_2$ 
to obtain a closed $3$--manifold $M_1 \cup_h M_2$.  
Rasmussen and Rasmussen \cite{RR} conjecture the following gluing condition 
for the resulting $3$--manifold $M_1 \cup_h M_2$ to be an L-space.

\begin{conjecture}[{\cite[Conjecture 1.7]{RR}}]
\label{L-slopes}
Assume $M_1$ and $M_2$ have incompressible boundary. 
The glued $3$--manifold $M_1 \cup_h M_2$ is an L-space if and only if 
$ h(\mathcal{L}^{\circ}(M_1)) \cup \mathcal{L}^{\circ}(M_2) = \mathbb{Q} \cup \{ 1/0 \}$. 
\end{conjecture}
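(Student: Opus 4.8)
The plan is to work entirely within bordered Heegaard Floer homology, using the immersed-curve reformulation of the torus-boundary invariant due to Hanselman, Rasmussen, and Watson. To each $M_i$ one associates an immersed multicurve $\gamma(M_i)$ (decorated with local systems, which I will suppress) in the once-punctured boundary torus $\partial M_i \setminus \{z\}$; the incompressibility hypothesis guarantees that no component is null-homotopic, so every component carries well-defined slope behavior. The pairing theorem computes $\HF(M_1 \cup_h M_2)$ as the Lagrangian Floer homology of the two curves $h(\gamma(M_1))$ and $\gamma(M_2)$, and for immersed curves in a surface this is carried by their minimal geometric intersection number. Since an L-space is exactly a rational homology sphere $Y$ with $\dim \HF(Y) = |H_1(Y)|$, and since there is a general homological lower bound for this minimal intersection, the whole statement reduces to a purely curves-on-a-surface assertion: the two multicurves realize the homological lower bound on their intersection number if and only if the slope sets cover $\Q \cup \{1/0\}$.

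First I would record the curve-theoretic description of $\mathcal{L}(M)$ and $\mathcal{L}^{\circ}(M)$. A Dehn filling $M(\alpha)$ is the pairing of $\gamma(M)$ with the single embedded line $\ell_\alpha$ of slope $\alpha$ (the immersed-curve invariant of the filling solid torus), so $M(\alpha)$ is an L-space precisely when $\ell_\alpha$ meets $\gamma(M)$ minimally. This fails exactly when $\gamma(M)$ has a component that \emph{wraps} in the $\alpha$ direction, i.e.\ whose lift to the infinite cyclic cover adapted to $\alpha$ is forced to cross every translate of $\ell_\alpha$. Thus $\mathcal{L}(M)$ is read off from the set of wrapping slopes of $\gamma(M)$, and Theorem~\ref{thm:RRslopes} is recovered as the statement that this set is an interval (or a degenerate/total version thereof) in slope space. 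The interior $\mathcal{L}^{\circ}(M)$ is the set of slopes at which $\ell_\alpha$ meets $\gamma(M)$ minimally and stably under small perturbation of $\alpha$; this openness is exactly why the conjecture features $\mathcal{L}^{\circ}$ rather than $\mathcal{L}$.

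The core of the argument is then the geometric equivalence. Assuming the covering condition, for each slope $\alpha$ at least one of the two curves is non-wrapping there, and I would use this to put $h(\gamma(M_1))$ and $\gamma(M_2)$ into a normal position — slope by slope, pushing the wrapping curve against the non-wrapping one — in which no bigons survive, so the geometric intersection is minimal and the gluing is an L-space. Conversely, if the covering condition fails at some slope $\alpha_0$, then both curves wrap in the $\alpha_0$ direction; lifting to the cover adapted to $\alpha_0$, the two wrapping families are forced to cross in more points than homology demands, and no isotopy removes these crossings, giving $\dim \HF(M_1 \cup_h M_2) > |H_1|$. The interval structure from Theorem~\ref{thm:RRslopes} is what makes ``wrapping slopes'' a tractable object and reduces the covering condition to two intervals together covering the circle of slopes.

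The main obstacle is the converse, non-minimality direction: one must show that a shared non-L-space slope is the \emph{only} obstruction to minimal intersection, i.e.\ that two immersed multicurves with possibly many components, non-compact leaves, and nontrivial local systems cannot acquire extra unavoidable intersections except through a common wrapping direction. This demands a careful count of essential intersections in the adapted cyclic cover, control of how local systems contribute to the Floer ranks, and a verification that the boundary slopes of the intervals $\mathcal{L}(M_i)$ behave correctly — precisely the subtlety that forces the use of $\mathcal{L}^{\circ}$. Because Conjecture~\ref{L-slopes} is exactly the open gluing conjecture of Rasmussen and Rasmussen, I would expect this step to be the genuinely hard combinatorial heart, and the reason the satellite application here is stated only modulo the conjecture.
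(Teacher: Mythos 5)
The first thing to flag: the statement you set out to prove is not a theorem of this paper at all. It is Conjecture~\ref{L-slopes}, quoted verbatim from Rasmussen--Rasmussen \cite{RR}, and the paper never proves it. Every result here that invokes it (Theorem~\ref{braided pattern} and the other satellite results of Section~\ref{satellite}) is stated conditionally, with the explicit hypothesis ``Suppose that Conjecture~\ref{L-slopes} is true,'' and the paper's only commentary on its status is the remark that Hanselman--Rasmussen--Watson \cite{HRW} had established it for loop-type manifolds and that Watson \cite{Watson-NewtonInst} had announced a resolution in general. So there is no proof in the paper to compare yours against, and your proposal must be judged on its own terms as an attempted proof of an open conjecture.

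On those terms there is a genuine gap, and you name it yourself: the entire converse direction --- that a common wrapping slope is the \emph{only} mechanism forcing excess intersection between $h(\gamma(M_1))$ and $\gamma(M_2)$, with multiple components, local systems, and the boundary behavior of the intervals $\mathcal{L}(M_i)$ all controlled --- is deferred as ``the genuinely hard combinatorial heart.'' An outline that stipulates its hardest step is not a proof. Earlier steps are also asserted rather than established: that $\HF$ of the gluing is computed by the \emph{minimal geometric} intersection number requires showing the relevant immersed multicurves can be isotoped to an unobstructed, bigon-free position in which the Floer differential vanishes, and generators must be weighted by local-system dimensions, so minimal intersection alone does not yield $\dim\HF(Y)=|H_1(Y)|$ without tracking those weights; likewise your curve-theoretic reading of $\mathcal{L}(M)$ and $\mathcal{L}^{\circ}(M)$ via wrapping, and the recovery of Theorem~\ref{thm:RRslopes} from it, are claimed without derivation. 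To your credit, the strategy you sketch --- pairing of immersed curves, detecting L-space slopes by minimal intersection with the lines $\ell_\alpha$, and analyzing wrapping in cyclic covers --- is precisely the route by which Hanselman, Rasmussen, and Watson ultimately settled the conjecture, as the paper's remark anticipates; but as submitted it is a roadmap to that proof, not a proof, and within this paper the statement functions only as a hypothesis.
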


\begin{remark}
For ``loop-type'' manifolds, 
Hanselman, Rasmussen and Watson \cite[Theorem~5]{HRW} establish Conjecture~\ref{L-slopes}. 
Watson \cite{Watson-NewtonInst} has further announced 
his recent joint work with Hanselman and Rasmussen which settles Conjecture~\ref{L-slopes} without the extra hypothesis of ``loop-type''. 
\end{remark}

Assuming Conjecture~\ref{L-slopes}, Hom \cite[Proposition~3.3]{Hom-satellite} proves the following result. 
Although she states the result in the case where $P(K)$ is a positive L-space knot, 
the case when $P(K)$ is a negative L-space knot follows immediately  since $\overline{P(K)} = \overline{P}(\overline{K})$.  

\begin{theorem}[Cf.\ {\cite[Proposition~3.3]{Hom-satellite}} and {\cite[Theorem 35]{HRW}}]
\label{Hom}
Suppose that Conjecture~\ref{L-slopes} is true.  
If a satellite knot $P(K)$ is a positive \(resp. negative\) L-space knot, 
then we have the following.
\begin{enumerate}
\item
$P$ and $K$ are positive \(resp. negative\) L-space knots,
\item
$P_n$ is a positive L-space knot for all $n \ge -2g(K) + 1$, 
and $P_{-n}$ is a negative L-space knot for all sufficiently large $n$
\(resp. $P_{-n}$ is a negative L-space knot for all $-n \le 2g(K) -1$, 
and $P_n$ is a positive L-space knot for all sufficiently large $n$\). 
\end{enumerate}
\end{theorem}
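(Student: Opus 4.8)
The plan is to study the L-space surgeries on $P(K)$ by cutting the exterior of $P(K)$ along the companion torus and invoking the gluing criterion of Conjecture~\ref{L-slopes}. Writing $X_K = S^3 - \nbhd(K)$ and $V_P = S^3-\nbhd(P \cup c)$ as in the setup, the exterior of $P(K)$ is $X_K \cup_h V_P$, glued along $\bdry N(K) = \bdry N(c)$ by the satellite identification $h$ that interchanges meridian and longitude; in the standard framings $h$ therefore inverts slopes, $s \mapsto 1/s$. For every slope $\alpha$ on $\bdry N(P)$ we have $P(K)(\alpha) = X_K \cup_h V_P(\alpha)$, and---after noting that both pieces have incompressible boundary, which uses that $K$ is nontrivial and $P$ is geometrically essential in $V$---Conjecture~\ref{L-slopes} identifies $P(K)(\alpha)$ as an L-space exactly when $h(\mathcal{L}^{\circ}(X_K)) \cup \mathcal{L}^{\circ}(V_P(\alpha)) = \mathbb{Q} \cup \{1/0\}$. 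Since $P(K)$ is a positive L-space knot this equality holds for every slope $\alpha \geq 2g(P(K))-1$, and I would mine these slopes for all of the structure, writing $\omega = \wind_c(P)$ throughout.

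I would first pin down the companion through the rational longitude. A homology computation places the rational longitude of $V_P(\alpha)$ on $\bdry N(c)$ at the slope $\omega^2/\alpha$, and a filling along a rational longitude has infinite first homology, hence is never an L-space; so $\omega^2/\alpha \notin \mathcal{L}(V_P(\alpha))$ and the gluing equality forces $\omega^2/\alpha \in h(\mathcal{L}^{\circ}(X_K))$. Pulling back (since $h$ inverts slopes), $\alpha/\omega^2 \in \mathcal{L}^{\circ}(X_K)$, so $K$ admits the positive L-space surgery $K(\alpha/\omega^2)$. As $K$ is a nontrivial knot in $S^3$ its exterior is not a generalized solid torus, so by the Rasmussen--Rasmussen structure theorem (Theorem~\ref{thm:RRslopes}) the set $\mathcal{L}(X_K)$ is the interval $[2g(K)-1, \infty]$; that is, $K$ is a positive L-space knot. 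Consequently $h(\mathcal{L}^{\circ}(X_K))$ is precisely the open interval $(0, 1/(2g(K)-1))$ of slopes on $\bdry N(c)$---the ``gap'' whose complement $\mathcal{L}^{\circ}(V_P(\alpha))$ must cover---and the requirement that $\omega^2/\alpha$ lie in this gap recovers $\alpha > \omega^2(2g(K)-1)$.

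Next I would read off the pattern and the positive half of (2) from the gap. Filling $\bdry N(c)$ along the meridian $\mu_c = 1/0$ returns $X_P$, and since $1/0$ lies outside the gap it is an L-space filling of $V_P(\alpha)$, so $P(\alpha)$ is an L-space and $P$ is a positive L-space knot. More generally $V_P(\alpha, -1/n) = P_n(\alpha + n\omega^2)$, and the slope $-1/n$ lies outside the open gap precisely when $n \geq -2g(K)+1$; for such $n$, taking $\alpha$ large enough that $\alpha + n\omega^2 > 0$ exhibits a positive-slope L-space surgery on $P_n$, so $P_n$ is a positive L-space knot for all $n \geq -2g(K)+1$. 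In particular $\{P_n\}$ contains infinitely many L-space knots, so Theorem~\ref{thm:limit}(2) shows the limit $P_{\infty}$ is an L-space knot in $S^1 \times S^2$, whence its exterior is a generalized solid torus by Remark~\ref{L-space_knot_S1xS2}(1).

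The main obstacle is the negative half of (2). The slopes $-1/n$ with $n \leq -2g(K)$ sit strictly inside the gap, so the gluing relation places them in $h(\mathcal{L}^{\circ}(X_K))$ and says nothing about $\mathcal{L}(V_P(\alpha))$ there; Theorem~\ref{thm:RRslopes} leaves open whether $\mathcal{L}(V_P(\alpha))$ is a proper interval, where nothing can be concluded, or the entire complement of its rational longitude $\omega^2/\alpha$. Establishing the latter is the crux, and is exactly where the generalized-solid-torus structure of $P_{\infty}$ (Remark~\ref{L-space_knot_S1xS2}(1)) must be fed back into the gluing through the full strength of Conjecture~\ref{L-slopes}: knowing that the $\bdry N(c)$-slope $0$ filling produces an L-space for every admissible $\alpha$ should force the $\bdry N(c)$-filling region of $V_P(\alpha)$ to be as large as Theorem~\ref{thm:RRslopes} permits. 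Granting this upgrade, every slope $-1/n$ other than $\omega^2/\alpha$ is an L-space filling, so $P_n(\alpha + n\omega^2)$ is an L-space for all $n$; since $\alpha + n\omega^2 < 0$ for $n \ll 0$, each such $P_n$ carries a negative L-space surgery and is a negative L-space knot. Here Theorem~\ref{thm:limit} and the triad of Lemma~\ref{lem:triad} are the tools that, starting from the known $0$-filling, are meant to spread L-spaceness across the remaining slopes. Thus $P_{-n}$ is a negative L-space knot for all large $n$, and the negative-L-space-knot case of the theorem follows by running the entire argument on the mirror $\overline{P(K)} = \overline{P}(\overline{K})$.
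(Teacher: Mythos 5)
You should first know that the paper contains no proof of this statement: Theorem~\ref{Hom} is quoted from \cite[Proposition~3.3]{Hom-satellite}, the only content added by the authors being the observation that the negative case follows from the positive one by mirroring, since $\overline{P(K)} = \overline{P}(\overline{K})$. So your proposal is in effect a reconstruction of Hom's cited argument, and its first two thirds succeed: decomposing $P(K)(\alpha) = X_K \cup_h V_P(\alpha)$, noting that the rational longitude $\omega^2/\alpha$ of $V_P(\alpha)$ is never an L-space filling slope and so must lie in $h(\mathcal{L}^{\circ}(X_K))$, concluding that $K$ is a positive L-space knot with gap $h(\mathcal{L}^{\circ}(X_K)) = (0, \frac{1}{2g(K)-1})$, and then placing $1/0$ and the slopes $-1/n$ with $n \ge -2g(K)+1$ outside that open gap, hence in $\mathcal{L}^{\circ}(V_P(\alpha))$, is exactly the right mechanism for part (1) and the positive half of part (2). (Two small blemishes there: the identification $\mathcal{L}(X_K)=[2g(K)-1,\infty]$ comes from \cite{OS_rational}, not from Theorem~\ref{thm:RRslopes}, which only gives the trichotomy; and boundary-incompressibility of $V_P(\alpha)$ does \emph{not} follow from $P$ being essential in $V$ --- the filled manifold can be boundary-reducible, e.g.\ for cable patterns at the cabling slope or $1$--bridge braids at solid-torus slopes \cite{Gabai_braid, Sch} --- which is precisely why the paper devotes a Claim to this point in the proof of Theorem~\ref{g(K)_g(P)_omega}; here one must argue similarly or discard the finitely many bad integral $\alpha$.)

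The genuine gap is the negative half of (2), which you explicitly do not prove (``granting this upgrade''), and the upgrade you gesture at is both unjustified and the wrong tool. Knowing that the slope-$0$ filling of $V_P(\alpha)$ is an L-space --- which, note, already follows from the gluing equality itself, since $0$ is an endpoint of the open gap and hence must be covered by $\mathcal{L}^{\circ}(V_P(\alpha))$, so the detour through Theorem~\ref{thm:limit} and Remark~\ref{L-space_knot_S1xS2} buys nothing --- does not force $\mathcal{L}(V_P(\alpha))$ to be the complement of its rational longitude: Theorem~\ref{thm:RRslopes} equally permits a closed interval, and a closed interval with endpoint exactly at $0$ would contain no slope $1/n$, killing your argument entirely. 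The missing idea is that Conjecture~\ref{L-slopes} is an equality of \emph{interiors}. Since $(0, \frac{1}{2g(K)-1})$ omits its own endpoints, the endpoint $0$ must lie in $\mathcal{L}^{\circ}(V_P(\alpha))$, i.e.\ in the interior of $\mathcal{L}(V_P(\alpha))$; hence $\mathcal{L}(V_P(\alpha))$ contains an open neighborhood of $0$, in particular $1/n$ for all sufficiently large $n$, and then $V_P(\alpha)(1/n) = P_{-n}(\alpha - n\omega^2)$ is an L-space with $\alpha - n\omega^2 < 0$ once $n > \alpha/\omega^2$, so $P_{-n}$ is a negative L-space knot for all large $n$. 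This also explains why the theorem asserts only ``sufficiently large $n$'' on the negative side: the threshold depends on how far past $0$ the interval $\mathcal{L}(V_P(\alpha))$ reaches, which the argument does not control --- whereas your proposed ``complement of the rational longitude'' upgrade would yield the much stronger (and false in general) conclusion that essentially every $P_{-n}$ with negative twisted slope is a negative L-space knot.
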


It follows from (1) that if $P(K)$ is a satellite L-space knot, 
then its pattern knot $P$ and companion knot $K$ must be L-space knots. 
However, it does not provide an information about the pattern $(V, P)$. 
We are able to put a further restriction on the pattern $(V,P)$ of a satellite L-space knot, 
which are conjectured in \cite{BakerMoore}(cf. \cite{Hom-satellite}). 

\begin{theorem}
\label{braided pattern}
Suppose that Conjecture~\ref{L-slopes} is true. 
If $P(K)$ is a satellite L-space knot,  
then it is a braided satellite knot, 
i.e.\ $P$ is a closed braid in $V$. 
\end{theorem}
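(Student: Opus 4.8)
The plan is to reduce the theorem to the braid-axis characterization already established, by showing that the \emph{pattern} twist family $\{P_n\}$ (obtained by twisting the pattern knot $P$ along its defining unknot $c$, where $V = S^3 - \nbhd(c)$) is a two-sided infinite family of L-space knots. First I would fix signs: since $\overline{P(K)} = \overline{P}(\overline{K})$, I may assume without loss of generality that $P(K)$ is a positive L-space knot. I also record the standing satellite hypothesis that $K$ is a nontrivial companion and that $P \cup c$ is neither the unlink nor the Hopf link.

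Next I would invoke Theorem~\ref{Hom}, which is exactly where Conjecture~\ref{L-slopes} enters the argument. Its conclusion (2) gives that $P_n$ is a positive L-space knot for all $n \ge -2g(K)+1$ and that $P_{-n}$ is a negative L-space knot for all sufficiently large $n$. In particular $\{P_n\}$ simultaneously contains infinitely many positive L-space knots and infinitely many negative L-space knots, so it is a two-sided infinite family of L-space knots in the sense of Proposition~\ref{positive_negative}.

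I would then apply Corollary~\ref{cor:Lspacebraidaxis} to this twist family. Since $\{P_n\}$ contains infinitely many L-space knots and is two-sided, the corollary forces either $c$ to be a braid axis for $P$ or $P \cup c$ to be the unlink. The latter alternative is excluded by the standing hypothesis that $P \cup c$ is not the trivial link (equivalently, $P$ is not embedded in a $3$--ball in $V$). Hence $c$ is a braid axis for $P$; that is, $P$ is a closed braid in $V$, and therefore $P(K)$ is a braided satellite, as claimed.

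The genuinely hard content is packaged inside Theorem~\ref{Hom} and inside the braid-axis machinery feeding Corollary~\ref{cor:Lspacebraidaxis} (ultimately Theorem~\ref{thm:braidaxis}), all of which I am free to cite. Consequently the main obstacle in assembling the proof is conceptual rather than computational: the key realization is that Theorem~\ref{Hom}(2) manufactures L-space knots on \emph{both} sides of the pattern twist family, producing precisely the two-sidedness that converts the braid-axis characterization into a statement about the pattern $(V,P)$ itself. The only remaining care is the routine verification that the degenerate conclusion of Corollary~\ref{cor:Lspacebraidaxis}, namely $P \cup c$ being the unlink, is ruled out by the definition of a satellite knot adopted here, and that the symmetric (negative) case is handled identically by mirroring.
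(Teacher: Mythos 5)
Your proposal is correct and takes essentially the same route as the paper's (very terse) proof: invoke Theorem~\ref{Hom}(2) to produce infinitely many positive and infinitely many negative L-space knots in the pattern twist family $\{P_n\}$, making it a two-sided infinite family, and then apply Corollary~\ref{cor:Lspacebraidaxis}. You simply make explicit two steps the paper leaves implicit---the two-sidedness verification and the exclusion of the unlink alternative via the standing hypothesis that $P \cup c$ is not the trivial link---both of which you handle correctly.
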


\begin{proof}
Employing Theorem~\ref{Hom}(2), 
Corollary~\ref{cor:Lspacebraidaxis} implies that $c$ is a braid axis for $P$.  
Hence $P(K)$ is a braided satellite.
\end{proof}

Suppose that a satellite knot $P(K)$ is an L-space knot. 
Then Theorem~\ref{braided pattern} shows that $P$ is braided in $V$, 
and hence the winding number $\omega$ coincides with the wrapping number, 
which is also the braid index. 
In particular, $\omega \ge 2$ for a satellite L-space knot $P(K)$.   

Let $P(K)$ be an $(\omega, q)$--cable of a knot $K$, where $\omega \ge 2$. 
Then it follows from \cite[Theorem 1.10]{Hedden-cable} and \cite{Hom-cable} that 
$P(K)$ is a positive L-space knot if and only if $K$ is a positive L-space knot and $0< \omega (2g(K)-1) \le q$. 
Note that we must actually have the strict inequality $\omega (2g(K)-1) < q$.  
Indeed, if $\omega (2g(K)-1) = q$, then $q$ is divisible by $\omega$; 
yet since $\omega$ and $q$ are coprime we must have $\omega=1$ which is contrary to our assumption that $\omega \geq 2$. 
Since $2g(P) = (\omega -1 )(q-1)$, 
we may rewrite this as $q = \frac{2g(P)-1 +\omega}{\omega-1}$. 
Hence we obtain an inequality 
$\omega(2g(K)-1) < \frac{2g(P)-1 +\omega}{\omega-1}$. 

The next result shows this is always the case for satellite L-space knots $P(K)$, 
and gives a constraint between the genera of the pattern knot $P$ and the companion knot $K$. 

\begin{theorem}
\label{g(K)_g(P)_omega}
Suppose that Conjecture~\ref{L-slopes} is true.  
Assume a satellite knot $P(K)$ is an L-space knot. 
Then we have: 
\begin{enumerate}
\item
$\displaystyle\omega(2g(K)-1) < \frac{2g(P)-1 +\omega}{\omega-1}$, and 
\item
$g(K) < g(P)$, 
or $g(K)=g(P)=1$. 
In the latter case, 
$P(K)$ is the $(2,3)$--cable of the trefoil knot $T_{3, 2}$ 
\(or $(2,-3)$--cable of the trefoil knot $T_{-3, 2}$\). 
\end{enumerate}
\end{theorem}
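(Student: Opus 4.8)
The plan is to reduce both assertions to the behavior of the Seifert genus of the twist family $\{P_n\}$ of pattern knots obtained by twisting $P$ along $c$. By Theorem~\ref{braided pattern} the circle $c$ is a braid axis for $P$, so $\omega \ge 2$ equals both the wrapping number and the braid index, and $\{P_n\}$ is a coherent twist family. By mirroring (using $\overline{P(K)} = \overline{P}(\overline{K})$) I may assume $P(K)$ is a \emph{positive} L-space knot. Then Theorem~\ref{Hom}(1) tells me $P$ and $K$ are positive L-space knots, and Theorem~\ref{Hom}(2) tells me the pattern knot $P_n$ is a positive L-space knot for every $n \ge -2g(K)+1$.

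For (1), the key step is the exact genus formula
\[ 2g(P_n) = 2g(P) + n\,\omega(\omega-1) \qquad \text{for all } n \ge -2g(K)+1. \]
Each $P_n$ in this range is tight fibered, hence has a unique minimal genus Seifert surface, and these knots form a coherent family; applying Theorems~\ref{thm:postwistquasipositive} and~\ref{thm:Murasugi-sum} within the range, the fiber of $P_{n+1}$ is a Murasugi sum of the fiber of $P_n$ with a positive torus knot fiber, so consecutive genera differ by exactly $\tfrac{\omega(\omega-1)}{2}$. Summing from $n=-2g(K)+1$ up to $n=0$ (where $P_0 = P$) gives the displayed formula. Setting $n=-2g(K)+1$ and using nonnegativity of genus,
\[ 2g(P) - (2g(K)-1)\,\omega(\omega-1) = 2g\big(P_{-2g(K)+1}\big) \ge 0 > 1-\omega, \]
where the last strict inequality uses $\omega \ge 2$. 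Rearranging this chain produces exactly the inequality (1).

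For (2), I would argue purely arithmetically from (1). Writing $a = 2g(K)-1 \ge 1$, inequality (1) is equivalent to $a\,\omega(\omega-1) < 2g(P)-1+\omega$, and a short computation shows $a(\omega^2-\omega-1) \ge \omega$ whenever $(\omega,a) \ne (2,1)$, which forces $2g(P) > 2g(K)$, i.e.\ $g(P) > g(K)$. The sole remaining case is $\omega=2$ and $g(K)=1$, where (1) only yields $g(P) \ge 1 = g(K)$, so either $g(P) > g(K)$ or $g(P)=g(K)=1$. In the latter subcase $K$ is a genus-one positive L-space knot, hence the trefoil $T_{3,2}$, while $P$ is a $2$--braid (as $\omega=2$ is the braid index) that is a genus-one positive L-space knot, hence the $(2,3)$--torus knot pattern $\sigma_1^3$; thus $P(K)$ is the $(2,3)$--cable of $T_{3,2}$. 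The negative L-space case is recovered by mirroring, giving the $(2,-3)$--cable of $T_{-3,2}$.

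The main obstacle is justifying the genus formula all the way down to $n=-2g(K)+1$: Theorems~\ref{thm:Murasugi-sum} and~\ref{thm:twistgenera} are stated only for sufficiently large $n$, so the crux is verifying that the Murasugi-sum/linear-growth relation persists throughout the \emph{entire} positive L-space range $n \ge -2g(K)+1$ rather than merely for large $n$. This is where I would spend the most care, leaning on the fact that tight fiberedness of each $P_n$ in the range (from Theorem~\ref{thm:postwistquasipositive}) guarantees a unique fiber whose incremental structure is governed by Theorem~\ref{thm:Murasugi-sum}. Everything else—the arithmetic for (2) and the genus-one L-space knot classifications—is routine.
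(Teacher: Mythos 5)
There is a genuine gap, and it sits exactly where you flagged it: the exact genus formula $2g(P_n) = 2g(P) + n\,\omega(\omega-1)$ for \emph{all} $n \ge -2g(K)+1$ is the crux of your proof of (1), and nothing you cite establishes it. Theorem~\ref{thm:twistgenera}, Theorem~\ref{thm:Murasugi-sum}, Lemma~\ref{lem:sliceequality}, and Theorem~\ref{thm:postwistquasipositive} all hold only for ``sufficiently large'' $n$, with thresholds ($n_0$, $N_0$, $N$) that come from \cite{BT} and are in no way controlled by $-2g(K)+1$; uniqueness of the fiber surface of a tight fibered $P_n$ does not make the fiber of $P_{n+1}$ a Murasugi sum of the fiber of $P_n$ with a torus-knot fiber below those thresholds, so the claimed ``incremental structure'' within the whole L-space range is unsupported. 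Theorem~\ref{Hom}(2) gives you the L-space property on that range, not genus linearity; since for an L-space knot $2g$ equals the degree of the Alexander polynomial, and degrees in a twist family can drop for small $n$ by cancellation, the formula needs a real proof. Note also that the paper's quantitative tools genuinely fall short here: using $g = g_4$ for strongly quasipositive knots together with Proposition~\ref{prop:newlowerslicebound} (the Rasmussen-invariant lower bound, with $\eta=\omega$) only yields
\begin{equation*}
2g(P) \;\ge\; (2g(K)-1)\,\omega(\omega-1) - 2\omega,
\end{equation*}
which is strictly weaker than (1) and, in the critical case $\omega=2$, $g(K)=1$, is vacuous — so your exceptional-case analysis in (2) cannot be rescued by weakening the formula to what the paper's estimates actually give.

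The paper proves (1) by an entirely different mechanism: it applies Conjecture~\ref{L-slopes} directly to the decomposition $P(K)(2g(P(K))-1) = X_K \cup_h V_P(2g(P(K))-1)$. After handling the cable case via \cite{Hedden-cable, Hom-cable}, it shows the filled pattern exterior $V_P(2g(P(K))-1)$ is boundary-irreducible (a two-case claim ruling out a solid-torus filling via the $1$--bridge braid parameters of \cite{HLV} and a reducible filling via the cabling slope arithmetic of \cite{Sch, Go_satellite}), then uses $h(\mathcal{L}^{\circ}(X_K)) = (0, \tfrac{1}{2g(K)-1})$ and the fact that the rational longitude of $V_P(2g(P(K))-1)$, of slope $\tfrac{\omega^2}{2g(P(K))-1}$, cannot be an L-space filling slope, forcing $\omega^2(2g(K)-1) < 2g(P(K))-1$; combined with $2g(P(K))-1 = 2g(P)-1+2\omega g(K)$ this gives (1). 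Your part (2) — the arithmetic reduction, the exceptional case $\omega=2$, $g(K)=g(P)=1$, Ghiggini's classification of the trefoil, and the identification of $P$ as the $(2,3)$--torus pattern — matches the paper's argument and is fine, but it is only as good as (1). To repair your approach you would either have to prove the genus formula on the full range $n \ge -2g(K)+1$ (a nontrivial assertion not in this paper) or fall back on the paper's gluing-conjecture argument.
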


\begin{proof}
We will prove the theorem in the case where $P(K)$ is a positive L-space knot. 
If $P(K)$ is a negative L-space knot, 
take the mirror $\overline{P(K)} = \overline{P}(\overline{K})$ of $P(K)$ and apply the same argument.
We begin with a proof of (1).  
If $P$ is a $0$--bridge braid in $V$, 
i.e.\ $P(K)$ is a cable knot of $K$, 
then as we mentioned above the first assertion follows from \cite{Hom-cable}. 
So in the following we assume $P$ is not a $0$--bridge braid in $V$.  

Given that $P(K)$ is a positive L-space knot, 
$P(K)(r)$, the glued $3$--manifold $X_K \cup_h V_P(r)$, 
is an L-space for any $r \geq 2g(P(K))-1$.  
Since $K$ is nontrivial, $X_K$ is boundary-irreducible, 
i.e.\ it has an incompressible boundary. 

\begin{claim*}
\label{boundary-irreducible}
$V_P(r)$ is boundary-irreducible when $r = 2g(P(K))-1$. 
\end{claim*}
 
\begin{proof}
Assume for a contradiction that $V_P(2g(P(K))-1)$ is boundary-reducible. 
Then it is either $S^1 \times D^2$ or $S^1 \times D^2\ \sharp\ W$ where $W$ is a nontrivial lens space \cite{Sch}. 

\smallskip

\noindent
\textbf{Case 1.}\ $V_P(2g(P(K))-1) = S^1 \times D^2$. 
Then $P$ is a $0$ or $1$--bridge braid in $V$ \cite{Gabai_solid_tori} . 
By our assumption that $P$ is not a $0$--bridge braid,  
we may assume it is a $1$--bridge braid in $V$. 
Let us use the updated notation of \cite{HLV} instead of what \cite{Gabai_braid} does.  
In particular, 
following \cite[p.2, Lemma~2.1 and footnote 3]{HLV}, 
using the bridge width $b$ and the twist number $t$, 
we have an expression $2g(P(K))-1 = t \omega + d$, 
where $d = b$ or $b+1$ and $1 \leq b \leq \omega -2,\  t = t_0 + q\omega$ for integers $q$ 
and constraining $t_0$ with $1 \leq t_0 \leq \omega -2$ to avoid $0$--bridge braids.   
Using these parameters, 
$2g(P) = (t-1)(\omega -1)+b$ \cite[Lemma~2.6]{HLV} and so $2g(P)-1 = t \omega - t - \omega + b$. 
Then we have 
\begin{align*}
t \omega + d
&= 2g(P(K))-1 \\
& = 2g(P) + 2\omega g(K) - 1 \\
&=  t \omega - t - \omega + b + \omega 2g(K). 
\end{align*}

Thus we have 
$t + (d-b) = \omega 2g(K) - \omega$. 
Then use that $t = t_0 + q \omega$ to obtain 
\[t_0 + (d-b) = \omega 2g(K) - \omega - q \omega 
= \omega(2g(K) -1 - q).\] 
However, 
$1 \leq t_0 \leq t_0 + (d-b) \leq t_0+1 \leq \omega -1$
so $t_0 + (d-b)$ cannot have $\omega$ as a factor, 
a contradiction.

\smallskip

\noindent
\textbf{Case 2.}\ $V_P(2g(P(K))-1) = S^1 \times D^2\ \sharp\ W$ where $W$ is a nontrivial lens space. 
Then following \cite{Sch}, 
$P$ is a $(p, q)$--cable of a knot $P'$ in $V$ and the surgery slope $r = 2g(P(K)) -1$ is the cabling slope $pq$, where $p \ge 2$. 
By the assumption $P'$ is not a core of $V$. 
Note that $V_P(pq) = V_{P'}(q/p)\ \sharp\ W$ for some lens space $W$ \cite{Go_satellite} 
and, since $V_{P'}(q/p)$ is irreducible but boundary-reducible, $V_{P'}(q/p) =S^1 \times D^2$. 
Since any non-integral surgery on a $1$--bridge braid never yields $S^1 \times D^2$ \cite[Lemma~3.2]{Gabai_braid}, 
$P'$ is a $0$--bridge braid in $V$, say $(r, s)$--torus knot in $V$, where $r \ge 2$. 
(Since $V_{P'}(q/p) = S^1 \times D^2$, we have $| rsp - q| = 1$, though this will not be used in the following.)
Hence $P$ is a $(p, q)$--cable of $T_{r, s}$. 
Then using \cite[\S21 Satz 1]{Schubert} 

\begin{align*}
pq 
& = 2g(P(K)) - 1 \\
& = 2g(T_{p, q}) + 2pg(T_{r, s}) -1 \\
& = p(q + rs -r -s) - q.
\end{align*}
The left-hand side is divided by $p \ge 2$, 
but the right-hand side cannot be divided by $p$, because $p$ and $q$ are coprime. 
\end{proof}

Thus both $X_K$ and $V_P(2g(P(K))-1)$ are boundary-irreducible. 
Since $\mathcal{L}(X_K) = [2g(K)-1, \infty]$ with respect to the standard $\mu_K, \lambda_K$ basis of $\bdry X_K$, 
we have that $h(\mathcal{L}^{\circ}(X_K)) = (0, \frac{1}{2g(K)-1})$ with respect to the standard $\mu_c, \lambda_c$ basis of 
$\bdry N(c) = \bdry V_P(r)$. 
Assuming Conjecture~\ref{L-slopes}, 
since $P(K)(2g(P(K))-1)$ is a positive L-space, 
we have 
\[ h(\mathcal{L}^{\circ}(X_K)) \cup \mathcal{L}^{\circ}(V_P(2g(P(K))-1)) 
=  (0, \frac{1}{2g(K)-1}) \cup \mathcal{L}^{\circ}(V_P(2g(P(K))-1)) 
= \mathbb{Q} \cup \{\infty\}. \]
Since the rational longitude of $V_P(2g(P(K))-1)$ has slope $\frac{\omega^2}{2g(P(K))-1}$ \cite[Lemma~3.3]{Go_satellite} 
and cannot be in $\mathcal{L}(V_P(2g(P(K))-1))$,  
it must be in $(0, \frac{1}{2g(K) -1})$,  
hence we have 
\[\frac{\omega^2}{2g(P(K)) -1} <\frac{1}{2g(K)-1}.\] 
It follows that 
\[\omega^2 (2g(K)-1) <  2g(P(K))-1.\]  
Since $P(K)$ is fibered, following \cite[Lemma~2.6]{HLV} we have: 
\[2g(P(K))-1 = 2g(P)-1 + \omega 2g(K),\] 
this inequality reduces to 
\[\omega(2g(K)-1) < \frac{2g(P)-1 +\omega}{\omega-1}\] 
as claimed.

\smallskip

Let us prove (2). 
If $\omega = 2$, 
then by (1) we have $2(2g(K)-1) < 2g(P)+1$ and hence $2g(K)-1 < g(P)+1/2$.  
Assuming that $g(P) \leq g(K)$, 
this then implies $2g(K)-1 < g(K)+1/2$ and so $g(K) < 3/2$.  
Since $K$ is non-trivial, $g(K)=1$ and $g(P) =1$ as well. 
Theorem~\ref{Hom} shows $P$ and $K$ are positive L-space knots, 
hence $K$ is the positive trefoil knot $T_{3, 2}$ \cite{Ghi}.  
By Theorem~\ref{braided pattern} $P$ is a closed braid in $P$ with braid index two, 
thus $P$ is a $(2, q)$--torus knot in $V$ for some $q$.
Since $g(P) = 1$, $q = 3$ and $P$ is a $(2, 3)$--torus knot in $V$. 
Hence, $P(K)$ is the $(2,3)$--cable of the trefoil knot $T_{3, 2}$.

So we may assume $\omega \ge 3$. 
If $g(P) = 0$, 
then (1) implies $\omega (2g(K) -1) < 1$, 
which means that $g(K) = 0$, 
i.e.\ $K$ is unknotted, a contradiction. 
So we assume $g(P) \ge 1$ and $\omega \ge 3$. 
Rewrite the inequality in (1) as 
\[2g(K) < \frac{2g(P) + \omega^2 - 1}{\omega(\omega-1)}\]
Since $\omega \ge 3$, 
we have $\omega^2 -1 < 2(\omega^2 - \omega -1)$. 
Hence $\omega^2 - 1 < 2(\omega ^2 - \omega -1)g(P)$.
Using this we have 
\[\frac{2g(P) + \omega^2 - 1}{\omega(\omega-1)} < 2g(P).\] 
Connecting the above two inequalities, we obtain the desired inequality $g(K) < g(P)$. 
\end{proof}

The second assertion in Theorem~\ref{g(K)_g(P)_omega} says that even when both $P$ and $K$ are L-space knots, 
we may not construct a satellite L-space knot for any (braided) embedding of $P$ in $V$.

\medskip

Finally we consider an essential tangle decomposition of satellite L-space knots. 
A knot $K$ in $S^3$ admits an \textit{essential $n$--string tangle decomposition} 
if there exists a $2$--sphere $S$ which intersects $K$ transversely in $2n$--points such that 
$S- \nbhd(K)$ is essential in $E(K)$. 
In particular, when $n = 2$, such a $2$--sphere is called an \textit{essential Conway sphere}. 
We say that $K$ has \textit{no essential tangle decomposition} 
if it has no essential $n$--string tangle decomposition for all integers $n$. 
Krcatovich \cite{Kr} shows that an L-space knot has no essential $1$--string tangle decomposition, 
i.e.\ $K$ is prime. 
Lidman and Moore conjecture \cite{LM} that any L-space knot $K$ admits no essential $2$--string tangle decomposition,
i.e.\ $K$ admits no essential Conway sphere. 
More generally, it is conjectured in \cite{BakerMoore}: 

\begin{conjecture}
\label{n-string prime}
Any L-space knot has no essential tangle decomposition.  
\end{conjecture}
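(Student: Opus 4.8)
Since this is a conjecture rather than a theorem proved in the paper, what follows is a proposed line of attack together with the point where I expect the essential difficulty to sit. The hypothesis ``$K$ is an L-space knot'' is, by its nature, a statement about the extreme simplicity of the knot Floer homology $\HFK(K)$, while ``essential tangle decomposition'' is a statement about essential surfaces in the exterior $E(K) = S^3 - \nbhd(K)$. The natural bridge between these two kinds of data is sutured Floer homology, so that is the framework I would work in.

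The plan is to argue by contradiction. By Krcatovich's theorem an L-space knot is prime, so there is no essential $1$--string decomposition and we may suppose $n \geq 2$; thus assume $K$ is an L-space knot with an essential $n$--string tangle decomposition along a $2$--sphere $S$ meeting $K$ in $2n$ points. Set $P = S \cap E(K)$, a properly embedded, incompressible, boundary-incompressible, non-boundary-parallel $2n$--punctured sphere whose boundary is $2n$ meridians of $K$. Writing $(B_i, T_i)$ for the two resulting tangles, I would first record the structure theorem for L-space knots: $\HFK(K)$ is a ``staircase,'' with rank one in each Alexander grading of its symmetric support, and equivalently the sutured Floer homology $SFH(E(K), \gamma_\mu)$ of the exterior with meridional sutures $\gamma_\mu$ realizes this staircase.

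Next I would equip $E(K)$ with sutures adapted to $P$ and apply Juh\'asz's sutured decomposition theorem along $P$: decomposing $E(K)$ along the essential surface $P$ produces the two tangle complements $E(B_i, T_i)$ as sutured manifolds, and the $SFH$ of the decomposition is a direct summand of $SFH(E(K), \cdot)$. Essentiality of $P$ guarantees the decomposition is taut, hence the summand is nonzero, and I would try to upgrade this to a \emph{quantitative} statement: essentiality should force each piece to carry Floer homology that is ``too rich'' to assemble---via the Petkova--V\'ertesi tangle pairing, which computes $\HFK(K)$ as a tensor product of the two tangle invariants---into the single staircase permitted by the L-space hypothesis once $n \geq 2$. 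In effect one wants to show that a meridional planar surface of genuine complexity introduces ``square'' generators incompatible with the staircase shape of an L-space knot. Fiberedness of $K$ (L-space knots are fibered) can be brought in here through Gabai's analysis of essential surfaces in fibered manifolds, to constrain how $P$ meets the fibration and thereby bound its complexity.

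The hard part will be the quantitative step: extracting a genuine lower bound on the (tangle, or sutured) Floer homology of an \emph{arbitrary} essential tangle complement $E(B_i, T_i)$ from essentiality alone. Unlike the closed-up knot $K$, neither tangle piece is a priori simple, and the delicate hypothesis to exploit is boundary-incompressibility along the meridional boundary $\partial P$---it is exactly this condition, ruling out the product or boundary-parallel cases that do occur for inessential surfaces, that must be converted into Floer-theoretic positivity. A successful argument might proceed by induction on $n$, treating the $n=2$ case of essential Conway spheres (itself the Lidman--Moore conjecture) in tandem and removing one essential sphere at a time; making an inductive complexity bound interact cleanly with the staircase rigidity of an L-space knot is where I expect the real work to lie.
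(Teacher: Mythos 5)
This statement is stated in the paper as a conjecture (following \cite{BakerMoore}), and the paper does \emph{not} prove it; there is no ``paper's own proof'' for your attempt to match. What the paper proves is the conditional partial result Theorem~\ref{satellite_n-string prime}: assuming Conjecture~\ref{L-slopes}, a \emph{satellite} L-space knot $P(K)$ has no essential $n$--string tangle decomposition for $n \le 3$, and has no essential tangle decomposition at all if its companion has none. That argument is entirely elementary once Theorem~\ref{braided pattern} is in hand: the pattern $P$ is braided in $V$ with index $\omega \ge 2$, so by \cite{HMO} any essential tangle sphere for $P(K)$ induces an essential $n'$--string decomposition of the companion $K$; since $K$ is an L-space knot, Krcatovich \cite{Kr} forces $n' \ge 2$, so the sphere meets $P(K)$ in at least $2\omega n' \ge 8$ points, contradicting $2n \le 6$. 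Your proposal takes a completely different, unconditional sutured-Floer route and does not engage with this satellite/braiding reduction at all.

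As a proof, your proposal has a genuine gap, and it is exactly where you flagged it: the ``quantitative step'' is not a technical lemma to be filled in later --- it \emph{is} the conjecture. Concretely: (i) decomposing $E(K)$ along the meridional $2n$--punctured sphere $P$ via Juh\'asz's theorem \cite{Juh} requires $P$, with a choice of sutures, to be an admissible decomposing surface and the resulting pieces to be taut; essentiality of $P$ (incompressibility plus boundary-incompressibility) does not by itself yield tautness of the two tangle exteriors with their induced sutures, let alone a lower bound on their sutured Floer homology. (ii) The Petkova--V\'ertesi pairing is a derived tensor product of $A_\infty$--modules over a tangle algebra, not a tensor product over a field, so ranks are not multiplicative; no known rigidity statement prevents a staircase $\HFK$ from arising as the pairing of two ``rich'' tangle invariants, which is precisely what you would need to rule out. (iii) Your suggested induction on $n$ requires the base case $n=2$ of essential Conway spheres, i.e.\ the Lidman--Moore conjecture \cite{LM}, which is itself open. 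So the proposal is a plausible research program, but it proves nothing and does not reduce the conjecture to any established result.
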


We apply Theorem~\ref{braided pattern} to give a partial answer to Conjecture~\ref{n-string prime}. 
The first assertion in Theorem~\ref{satellite_n-string prime} immediately follows from Theorem~\ref{braided pattern} and 
\cite[Theorem~20]{BakerMoore}, 
but we will give its proof below in the course of the proof of (2). 

\begin{theorem}
\label{satellite_n-string prime}
Suppose that Conjecture~\ref{L-slopes} is true. 
Let $P(K)$ be a satellite L-space knot with a pattern $(V, P)$ and a companion knot $K$.
\begin{enumerate}
\item
If $K$ has no essential tangle decomposition, 
then $P(K)$ has no essential tangle decomposition neither. 
\item
$P(K)$ has no essential $n$--string tangle decomposition for $n \le 3$. 
In particular, $P(K)$ does not admit an essential Conway sphere. 
\end{enumerate}
\end{theorem}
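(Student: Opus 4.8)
The plan is to reduce to the braided pattern, exploit primeness of L-space knots to dispose of small string numbers, and then analyze an essential tangle sphere against the companion torus, descending any essential decomposition to the companion while controlling the pattern side by its open-book (braided) structure. First I would record the inputs. By Theorem~\ref{braided pattern} the pattern $P$ is a closed braid in $V$, so the wrapping number $\eta$, the winding number $\omega$, and the braid index coincide and $\omega \ge 2$; moreover $V_P = S^3 - \nbhd(P \cup c)$ fibers over $S^1$ with fiber a disk with $\omega$ holes, one outer boundary lying on $\bdry N(c)$ and $\omega$ inner boundaries on $\bdry N(P)$. By Theorem~\ref{Hom}(1) both $K$ and $P$ are L-space knots, and by \cite{Kr} an L-space knot is prime. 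In particular $P(K)$ is prime, which rules out an essential $1$-string tangle decomposition; this settles $n=1$ and supplies the primeness of the companion used below. Write $X_K = S^3 - \nbhd(K)$, let $T = \bdry N(K)$ be the companion torus (incompressible, since $P(K)$ is a genuine satellite), and recall $E(P(K)) = X_K \cup_T V_P$, glued so that $\bdry N(c)$ is identified with $T$ and the remaining boundary $\bdry N(P)$ is $\bdry N(P(K))$.

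Suppose, for contradiction, that $P(K)$ admits an essential $n$-string tangle decomposition with decomposing sphere $S$ meeting $P(K)$ in $2n$ points; I treat arbitrary $n$ for assertion (1) and $2 \le n \le 3$ for assertion (2). Isotope $S$ to minimize $|S \cap T|$ and run the standard innermost-disk/outermost-arc reduction: since $T$ is incompressible, $\hat S = S - \nbhd(P(K))$ is essential, and $X_K$ and $V_P$ are irreducible with incompressible boundary, every circle of $S \cap T$ may be taken essential on both $T$ and $\hat S$. These circles are mutually parallel on $T$ of a single slope $\gamma$ and cut $\hat S$ into essential planar pieces lying alternately in $X_K$ and in $V_P$, with all $2n$ punctures on the $V_P$-side because $P(K) \subset \inter{V}$. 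If $S \cap T = \emptyset$, then $S \subset \inter{V}$ and $\hat S$ is a properly embedded essential planar surface in $V_P$ with all its boundary on $\bdry N(P)$. But in the fibration of $V_P$ any essential surface is isotopic to a fiber, a union of fibers, or a horizontal surface; fibers and horizontal surfaces meet $T$, while the only vertical surfaces are annuli and tori. A punctured sphere avoiding $T$ is none of these, so $\hat S$ cannot be essential, a contradiction. This disposes of the case $S \cap T = \emptyset$ for every $n$ (and uses only that the pattern is braided).

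So assume $S \cap T \neq \emptyset$. A homology count in $X_K$ forces $\gamma$ to be the meridian $\mu_K$: for any other slope an unpunctured essential planar piece in $X_K$ with boundary of slope $\gamma$ is either boundary-parallel (contradicting minimality of $|S \cap T|$) or produces an essential annulus or torus in the exterior of the prime, fibered L-space knot $K$, which can then be split off along a torus-knot or cable companion and the argument rerun on a simpler companion. With $\gamma = \mu_K$, capping the $\gamma$-curves by meridian disks of $V$ descends $S$ to an essential $m$-string tangle decomposition of $K$; this is precisely the braided case of \cite[Theorem~20]{BakerMoore}, and it establishes assertion (1) at once, since if $K$ admits no essential tangle decomposition then no such $S$ can exist. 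For assertion (2) it remains to count. Since $K$ is prime, $m \ge 2$. Because $P(K)$ wraps $\omega$ times around $K$, each meridian disk of $V$ meets $P(K)$ in $\omega$ points, so in the descent each of the $2m$ intersections of the companion sphere with $K$ accounts for $\omega$ intersections of $S$ with $P(K)$; hence $n = m\omega \ge 2\cdot 2 = 4$, contradicting $n \le 3$ and in particular forbidding an essential Conway sphere ($n=2$).

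The main obstacle will be the slope analysis when $S \cap T \neq \emptyset$: showing $\gamma$ must be the meridian and that the resulting descent is clean. Ruling out non-meridional slopes requires controlling essential planar surfaces in the exterior of the L-space knot $K$, which in the Seifert-fibered (torus-knot) and toroidal (cable or satellite) cases does not give an immediate contradiction but instead forces an inductive descent to a simpler companion; organizing this induction, and upgrading the puncture relation to the exact equality $n = m\omega$ rather than a mere inequality, are the delicate points. By contrast, the case $S \cap T = \emptyset$ is handled uniformly by the braided fibration of $V_P$ and needs no restriction on $n$, which is why assertion (1) holds for all string numbers while the numerical bound in assertion (2) comes entirely from the interplay of $m \ge 2$ and $\omega \ge 2$.
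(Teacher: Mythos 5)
Your overall architecture is the same as the paper's: use Theorem~\ref{braided pattern} to make $P$ a closed braid in $V$ with $\omega \ge 2$, use Theorem~\ref{Hom}(1) together with Krcatovich \cite{Kr} to force at least two strings on the companion side, descend the essential tangle sphere from $P(K)$ to $K$ (which settles (1) immediately), and get the contradiction $2n \ge 2\omega n' \ge 8 > 6$ for (2). The difference is that the paper carries out the descent with a single citation to Hayashi--Matsuda--Ozawa \cite{HMO}: once the braided pattern $(V,P)$ is known to admit no essential tangle decomposition, \cite{HMO} produces from $S$ an essential tangle decomposition of the companion $K$, with the wrapping number multiplying the string count. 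You instead attempt to reprove this descent from scratch, and that is where the genuine gap lies. In your case $S \cap T \neq \emptyset$, the claim that the slope $\gamma$ of $S \cap T$ must be meridional, and the fallback for non-meridional slopes (``an essential annulus or torus \dots split off \dots and the argument rerun on a simpler companion''), amount to an inductive descent that you explicitly leave unorganized; this induction is precisely the content of \cite{HMO}, so as written the central step of your proof is asserted rather than proved. (Separately, the exact equality $n = m\omega$ you aim for is more than is needed: the inequality $2n \ge 2\omega n'$, which is what \cite{HMO} delivers and the paper uses, already gives the contradiction.)

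There is also a concrete error in your case $S \cap T = \emptyset$: the claim that every essential surface in the fibered manifold $V_P$ is a fiber, a union of fibers, or horizontal, with the only vertical surfaces being annuli and tori, is the vertical/horizontal classification of essential surfaces in \emph{Seifert fibered} spaces, and it is false for general surface bundles over $S^1$. For example, the figure-eight knot exterior fibers over $S^1$, yet it contains essential surfaces of boundary slope $\pm 4$ that are neither fibers nor isotopic to surfaces transverse to the fibration (its only fibered cohomology class is that of the genus-one fiber). So this step fails as stated. The conclusion you want here --- that a closed braid $(V,P)$ admits no essential tangle decomposition --- is true, and it is exactly what the paper extracts from Theorem~\ref{braided pattern} as the hypothesis needed to invoke \cite{HMO}, but it requires a different argument (or a citation, e.g.\ via \cite{HMO} or \cite[Theorem~20]{BakerMoore}) than the one you gave. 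If you replace your hand-made descent and the false trichotomy with the appeal to \cite{HMO}, your counting argument then matches the paper's proof and is correct.
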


\begin{proof}
Suppose that we have a $2$--sphere $S$ which gives an essential tangle decomposition for $P(K)$. 
It follows from Theorem~\ref{braided pattern} that 
$P$ is a closed braid in $V$ (with braid index $\omega \ge 2$), 
and hence $(V, P)$ has no essential tangle decomposition.  
Then following \cite{HMO}, 
$S$ gives an essential tangle decomposition of the companion knot $K$ for some integer $n$. 
This contradicts the assumption of (1), and completes a proof of (1). 

To proceed the proof of (2), 
assume for a contradiction that $S$ gives an essential $n$--string tangle decomposition for $P(K)$ for $n \le 3$. 
Then $S$ intersects $P(K)$ transversely in $2n \le 6$ points. 
As above $S$ gives an essential $n'$--string tangle decomposition of $K$ for some integer $n'$. 
By Theorem~\ref{Hom}(1) $K$ should be an L-space knot, 
and it has no essential $1$--string tangle decomposition \cite{Kr}. 
Hence $n' \ge 2$. 
This means that $S$ intersects $P(K)$ at least $2 \omega n' \ge 8$ times, 
a contradiction. 
\end{proof}

Since a torus knot has no essential $n$--string tangle decomposition for all $n$ \cite{GR, Tsau} we have: 

\begin{corollary}
\label{L-space knots_n-string prime}
Suppose that Conjecture~\ref{L-slopes} is true. 
Then Conjecture~\ref{n-string prime} is true whenever hyperbolic L-space knots have no essential tangle decomposition. 
\end{corollary}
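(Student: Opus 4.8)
The plan is to combine the trichotomy for knot exteriors with the two satellite results already established and run an induction. Recall that by geometrization the exterior of any nontrivial knot is Seifert fibered, hyperbolic, or toroidal, so that the knot is respectively a torus knot, a hyperbolic knot, or a satellite knot. Accordingly, I would prove that every L-space knot has no essential tangle decomposition by inducting on the number of tori in the JSJ decomposition of its exterior (equivalently, on its satellite depth), with the torus and hyperbolic knots forming the base case and satellite knots the inductive step.

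For the base case, let $K$ be an L-space knot whose exterior is atoroidal, so that $K$ is either a torus knot or a hyperbolic knot. If $K$ is a torus knot, then it has no essential $n$--string tangle decomposition for any $n$ by \cite{GR, Tsau}. If $K$ is hyperbolic, then the standing hypothesis of the corollary---that hyperbolic L-space knots have no essential tangle decomposition---applies directly. For the inductive step, suppose $K$ is an L-space knot whose exterior is toroidal, so that $K = P(K')$ is a satellite knot with a nontrivial companion $K'$. Since any L-space knot is a positive or negative L-space knot, Theorem~\ref{Hom}(1)---the step in which Conjecture~\ref{L-slopes} is used---shows that the companion $K'$ is again an L-space knot. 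As $K'$ has strictly smaller JSJ complexity, the inductive hypothesis gives that $K'$ has no essential tangle decomposition, whereupon Theorem~\ref{satellite_n-string prime}(1) transports this property to $K = P(K')$. This closes the induction and proves the corollary.

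The step requiring the most care is the well-foundedness of the induction, namely that the companion $K'$ of a satellite L-space knot has strictly smaller complexity than $K$. I would secure this by choosing the companion torus to be outermost among the essential tori in the exterior of $K$: then $E(K')$ is recovered from $E(K)$ by discarding the pattern piece $V_P$, which is nonempty and not a solid torus since $P$ is neither a core of $V$ nor contained in a ball, so $E(K')$ contains strictly fewer JSJ tori. The only remaining point to verify is that the companion $K'$ is nontrivial, which is part of the definition of a satellite knot adopted here, so that $K'$ genuinely falls under the trichotomy at the next stage of the induction.
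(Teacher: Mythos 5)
Your argument is correct and is essentially the proof the paper intends: the corollary is stated right after the remark that torus knots have no essential tangle decomposition \cite{GR, Tsau}, so the implicit proof is exactly your geometrization trichotomy combined with Theorem~\ref{Hom}(1) (where Conjecture~\ref{L-slopes} enters) and Theorem~\ref{satellite_n-string prime}(1), iterated along the satellite/JSJ hierarchy. Your explicit attention to the well-foundedness of the induction and to the nontriviality of the companion only makes precise what the paper leaves unsaid.
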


\end{document}